\newtheorem{thm}{Theorem}[section]
\newtheorem{lem}[thm]{Lemma}
\theoremstyle{definition}
\newtheorem{defn}{Definition}[section]
\newtheorem{rem}{Remark}[section]
\numberwithin{equation}{section}
\newcommand{\eps}{\varepsilon}
\newcommand{\be}{\begin{equation}}
\newcommand{\ee}{\end{equation}}
\newcommand{\bes}{\begin{eqnarray}}
\newcommand{\ees}{\end{eqnarray}}
\newcommand{\bess}{\begin{eqnarray*}}
\newcommand{\eess}{\end{eqnarray*}}
\newcommand{\jfo}{\int_\Omega}
\newcommand{\jft}{\int^{t+1}_t\int_\Omega}
\newcommand{\zs}{\frac{m p}{3}+m+p-2}
\newcommand{\zbl}{(\cdot,t)}
\newcommand{\zbls}{(\cdot,s)}
\newcommand{\daR}{\mathbb{R}}
\newcommand{\sjl}{,\ \ \ \ for\ all\ t\geq0}
\newcommand{\sjjl}{,\ \ \ \ for\ all\ t>0}
\newcommand{\sj}{,\ \ \ \ {\rm for\ all}\ t\geq0}
\newcommand{\sjj}{,\ \ \ \ {\rm for\ all}\ t>0}
\newcommand{\banqun}[1]{{e^{-{#1}A}}}
\newcommand{\zzh}[1]{#1_\varepsilon}
\newcommand{\xrsl}{\stackrel{\ast}{\rightharpoonup}}
\newcommand{\lp}[1]{{L^{#1}(\Omega)}}
\begin{document}
\title[C.-S. system with slow $p$-Laplacian diffusion]{Global existence and boundedness in a chemotaxis-Stokes system with slow $p$-Laplacian diffusion}%
\author[Tao]{Tao Weirun}%
\address{Institute for Applied Mathematics, School of Mathematics, Southeast University, Nanjing 211189, P.R. China}
\email{taoweiruncn@163.com}
\author[Li]{Li Yuxiang}%
\address{Institute for Applied Mathematics, School of Mathematics, Southeast University, Nanjing 211189, P.R. China}
\email{lieyx@seu.edu.cn}

\subjclass[2010]{35Q92, 35K55, 35Q35, 92C17.}%
\keywords{chemotaxis, Navier-Stokes equation, nonlinear diffusion, $p$-Laplacian diffusion, global existence, boundedness.}

\begin{abstract}
This paper deals with a boundary-value problem in three-dimensional smooth bounded convex domains for the coupled chemotaxis-Stokes system with slow $p$-Laplacian diffusion
 \begin{eqnarray}\nonumber
  \left\{\begin{array}{lll}
     \medskip
     n_t+u\cdot\nabla n=\nabla\cdot(|\nabla n|^{p-2}\nabla n)-\nabla\cdot(n\nabla c),&{} x\in\Omega,\ t>0,\\
     \medskip
     c_t+u\cdot\nabla c=\Delta c-nc,&{} x\in\Omega,\ t>0,\\
     \medskip
     u_t=\Delta u+\nabla P+n\nabla\phi,&{} x\in\Omega,\ t>0,\\
     \medskip
     \nabla\cdot u=0,&{} x\in\Omega,\ t>0,
  \end{array}\right.
\end{eqnarray}
where $\phi\in W^{2,\infty}(\Omega)$ is the gravitational potential. It is proved that global bounded weak solutions exist whenever $p>\frac{23}{11}$ and
the initial data $(n_0,c_0,u_0)$ are sufficiently regular satisfying $n_0\geq 0$ and $c_0\geq 0$.
\end{abstract}
\maketitle
\section{Introduction}
In this paper, our goal is to establish the existence and the boundedness of the global weak solutions to the following chemotaxis-Stokes system with $p$-Laplacian diffusion
 \begin{eqnarray}\label{CS}
  \left\{\begin{array}{lll}
     \medskip
     n_t+u\cdot\nabla n=\nabla\cdot\left(|\nabla n|^{p-2}\nabla n\right)-\nabla\cdot(n\nabla c),&{} x\in\Omega,\ t>0,\\
     \medskip
     c_t+u\cdot\nabla c=\Delta c-nc,&{}x\in\Omega,\ t>0,\\
     \medskip
     u_t=\Delta u+\nabla P+n\nabla\phi ,&{}x\in\Omega,\ t>0,\\
     \medskip
     \nabla\cdot u=0, &{}x\in\Omega,\ t>0
  \end{array}\right.
\end{eqnarray}
in a smooth bounded domain $\Omega\subset\mathbb{R}^3$, where the scalar function $n$ represents the density of aerobic bacteria, and $c$ represents the concentration of oxygen. The vector $u =(u_1, u_2, u_3)$ is the fluid velocity field and $P=P(x, t)$ represents the associated pressure of the fluid. The given function $\phi$ stands for the gravitational potential produced by the action of physical forces on the cell. The nonlinear diffusion $\nabla\cdot(|\nabla n|^{p-2}\nabla n)$ is called the slow $p$-Laplacian diffusion if $p>2$, and called the fast $p$-Laplacian diffusion if $1 < p < 2$.

Chemotaxis describes biased movement of cells in response to the concentration gradient of a diffusible chemical signal. The most famous model used to describe this biochemotactic phenomenon is the Keller-Segel model which was first presented in \cite{KS1970}. The prototype of classical chemotaxis model reads as
 \begin{eqnarray}\label{idahuflia}
  \left\{\begin{array}{lll}
     \medskip
     n_t=\Delta n-\nabla\cdot(n\nabla c),&{} x\in\Omega,\ t>0,\\
     \medskip
     \tau c_t=\Delta c-c+n,&{}x\in\Omega,\ t>0,
  \end{array}\right.
\end{eqnarray}
where $n$ denotes the cell density and $c$ describes the concentration of the chemical signal which is directly produced by cells themselves. In the past 4 decades, this model has been attracted many mathematicians to study its qualitative properties such as critical mass phenomenon, blowup, boundedness, pattern formations and critical sensitivity exponents (e.g. see \cite{Calvez-CPDE-2012,Zhi-an-JMAA-2010,Hittmeir-SIAMJMA-2011,Horstmann&Wang-JAM-2001,Horstmann-Winkler-JDE-2005,MR1361006,Nagai&Senba&Yoshida-FE-1997,MR1887324,MR3124759,Winkler-JDE-2010,Winkler-JMPA-2013} and the references therein). In addition to the original model, a large number of variants of the classical form have also been studied, including the system with the logistic terms ( e.g. see \cite{Tao&Winkler-SIAMJMA-2011,Zhang&Li-ZAMP-2015-B,LiYan&Lankeit-N-2016}), multi-species chemotaxis system ( e.g. see \cite{Lou&Tao&Winkler-SIAMJMA-2014,Zhang&Li-JMAA-2014,LiYan&Li-NA-2014,LiYan-JMAA-2015}), attraction-repulsion chemotaxis system ( e.g. see\cite{Tao&Wang-M3AS-2013,LiYan&Li-NARWA-2016}) and so on ( e.g. see review articles \cite{Bellomo&Bellouquid&Tao&Winkler-M3AS-2015,Hillen-JMB-2009,Horstmann-JDMV-2003,Horstmann-JDMV-2004} for the further reading).

The chemotaxis-Navier-Stokes system was first proposed in \cite{Tuval2005}. Aerobic bacteria such as \emph{Bacillus subtilis} often live in thin fluid layers near solid-air-water contact line, in which the biology of chemotaxis, metabolism, and
cell-cell signaling is intimately connected to the physics of buoyancy,
diffusion, and mixing (cf. \cite{Tuval2005}). Both bacteria and oxygen diffuse through the fluid, and they are also transported by the fluid (cf. \cite{DOMBROWSKI-PRL-2004} and \cite{Lorz-M3AS-2010}). Taking all these roles into account, the model in \cite{Tuval2005} reads as
\begin{eqnarray}\label{model-Tuval}
  \left\{\begin{array}{lll}
     \medskip
     n_t+u\cdot\nabla n=\Delta n-\nabla\cdot(n\chi(c)\nabla c),&{} x\in\Omega,\ t>0,\\
     \medskip
     c_t+u\cdot\nabla c=\Delta c-nf(c),&{} x\in\Omega,\ t>0,\\
     \medskip
     u_t+\kappa(u\cdot\nabla) u=\Delta u+\nabla P+n\nabla\Phi,&{} x\in\Omega,\ t>0,\\
     \medskip
     \nabla\cdot u=0,&{} x\in\Omega,\ t>0,
  \end{array}\right.
\end{eqnarray}
where the domain $\Omega\subset \mathbb{R}^d$, the vector $u=(u_1(x,t), u_2(x,t), \cdots, u_d(x,t))$ is the fluid velocity field and the associated pressure is represented by $P=P(x,t)$.

In the last decade, the chemotaxis fluid system (\ref{model-Tuval}) has attracted much attention. In 2010, the author showed in \cite{Lorz-M3AS-2010} that in certain parameter regimes, the system (\ref{model-Tuval}) possesses local weak solutions in a bounded domain in $\daR^d$, $d=2,3$ with no-flux boundary condition and in $\daR^2$ in the case of inhomogeneous Dirichlet conditions for the oxygen. In the same year, the authors proved in \cite{Duan&Lorz&Markowich-CPDE-2010} that in the two-dimensional case, the Cauchy problem  for (\ref{model-Tuval}) with $\kappa=0$ admits global existence of weak solutions, provided that some further technical conditions are satisfied and the structural conditions on $\chi$ and $f$ are satisfied. It was also proved in \cite{Duan&Lorz&Markowich-CPDE-2010} that under the two-dimensional setting, the chemotaxis-Navier-Stokes system (\ref{model-Tuval}) with $\kappa=1$ and $\Omega=\daR^3$ admits global classical solutions near constant steady states. In 2011, global existence of solutions to the Cauchy problem was investigated under certain conditions in \cite{Liu&Lorz-AIHP-2011}. The authors showed there that the chemotaxis-Navier-Stokes system (the system (\ref{model-Tuval}) with $\kappa=1$) possesses global weak solutions for large data. In 2012, it was proved in \cite{Winkler-CPDE-2012} that if $\chi(s)\equiv1$ for $s\in\daR$ and $f(s)\equiv s$ for $s\in\daR$, then the simplified chemotaxis-Stokes system (the system (\ref{model-Tuval}) with $\kappa=0$) possesses at least one global weak solution and the full chemotaxis-Navier-Stokes system (the system (\ref{model-Tuval}) with any $\kappa\in\daR$) admits a unique global classical solution under the boundary condition $\frac{\partial n}{\partial\nu}=\frac{\partial c}{\partial\nu}=u=0$ on $\partial \Omega$ and suitable regularity assumptions on the initial data. In 2013, it was proved in \cite{Chae-DCDS-2013} that there exist a global classical solution to the Cauchy problem with $\Omega=\daR^2$ under the appropriate structural assumptions for $\chi$ and $f$. In 2014, the same author of \cite{Winkler-CPDE-2012} showed in \cite{Winkler-ARMA-2014} that the global classical solutions obtained in \cite{Winkler-CPDE-2012} stabilize to the spatially uniform equilibrium $(\bar{n}_0, 0, 0)$ with $\bar{n}_0=\frac1{|\Omega|}\int_\Omega n_0(x)dx$ as $t\rightarrow\infty$ if $\Omega$ is a bounded convex domain in $\daR^2$. In 2015, the authors proved in \cite{Zhang&Li-DCDS-2015} that such solution converges to the equilibrium $(\bar{n}_0, 0, 0)$ exponentially in time. In the same year, by deriving a new type of entropy-energy estimate, it was shown in \cite{Jiang&Wu&Zheng-2014} that the restricted condition in \cite{Winkler-ARMA-2014} that $\Omega$ is essentially assumed to be convex can be removed. In 2016, the author proved in \cite{LiYan&Li-JDE-2016} that global classical bounded solutions to the Cauchy problem exist for regular initial data. In the same year, the author in \cite{Winkler-AIHP-2016} established global weak solutions of (\ref{model-Tuval}) in bounded convex domains $\Omega\subset\daR^3$ with suitable regularity assumptions on the initial data and appropriate assumptions for $\chi$, $f$ and $\phi$. In 2017, the long-term behaviour of “eventual energy solution” was investigated in \cite{Winkler-2017-TAMS}, which, namely, become smooth on some interval $[T,\infty)$ and uniformly converge in the large-time-limit. For more results of the well-posedness of the Cauchy problem to (\ref{model-Tuval}) in the whole space we refer the reader to \cite{Chae-CPDE-2014,Liu&Lorz-AIHP-2011,ZhangQian-NARWA-2014,Zhang&Zheng-SIAM-2014}.

Recently, a number of papers studied the Keller-Segel system with the linear diffusion replaced by the nonlinear diffusion. In \cite{Di&Lorz&Markowich-DCDS-2010}, the authors introduced the model
\begin{eqnarray}\label{p-laplacian-ks}
  \left\{\begin{array}{lll}
     \medskip
     n_t+u\cdot\nabla n=\nabla\cdot\left(D(n)\nabla n\right)-\nabla\cdot(n\chi(c)\nabla c),&{} x\in\Omega,\ t>0,\\
     \medskip
     c_t+u\cdot\nabla c=\Delta c-nf(c),&{}x\in\Omega,\ t>0,\\
     \medskip
     u_t+\kappa(u\cdot\nabla)u=\Delta u+\nabla P+n\nabla\Phi ,&{}x\in\Omega,\ t>0,\\
     \medskip
     \nabla\cdot u=0, &{}x\in\Omega,\ t>0.
  \end{array}\right.
\end{eqnarray}
Under the assumption of porous media, taking $D(n)=n^{m-1}$, global-in-time solution to the chemotaxis-Stokes system was constructed in \cite{Di&Lorz&Markowich-DCDS-2010} for general initial data if $m\in(\frac32, 2]$, while the same result holds in three-dimensional setting under the constraint $m \in(\frac{7+\sqrt{217}}{12}, 2]$. Intuitively, the nonlinear diffusion $\Delta u^m$ for $m>1$ can prevent the occurrence of blow up. Under the three-dimensional setting. In \cite{Liu&Lorz-AIHP-2011}, global-in-time weak solutions to the Cauchy problem was obtained for $m=\frac43$ under the appropriate structural assumptions for $\chi$. It was shown in \cite{Winkler-CVPDE-2015} that (\ref{p-laplacian-ks}) admits a global bounded weak solutions to the chemotaxis-stokes system in
bounded convex domains if $m>\frac76$. This partially extended a precedent result which asserted
global solvability within the larger range $m>\frac87$ , but only in a class of weak solutions locally bounded in $\overline{\Omega}\times [0,\infty)$ (cf.\cite{Tao&Winkler-AIHP-2013}). It was proved in \cite{Zhang&Li-JDE-2015} that the model possesses at least one global weak solution under the condition that $m\geq\frac23$. Recently, it was shown in \cite{Winkler-JDE-2018} that the chemotaxis-Stokes system admits a global bounded weak solutions under the assumption that $m>\frac98$. Moreover, the obtained solutions are shown to approach the spatially homogeneous steady state ($\frac1{|\Omega|}\int_\Omega n_0,0,0$) in the large time limit. For smaller values of $m > 1$, up to now existence results are limited to classes of possibly unbounded solutions (cf.\cite{Duan&Xiang-IMRN-2014}).
\vskip 3mm
In addition to porous media diffusion, $p$-Laplacian diffusion has also been considered in the Keller-Segel system. In \cite{Cong&Liu-KRM-2016}, the authors studied the following $p$-Laplacian Keller-Segel model in dimension $d\geq3$:
 \begin{eqnarray}\label{ljgmodel}
  \left\{\begin{array}{lll}
     \medskip
     n_t=\nabla\cdot\left(|\nabla n|^{p-2}\nabla n\right)-\nabla\cdot(n\nabla c),&{} x\in\mathbb{R}^d,\ t>0,\\
     \medskip
     -\Delta c=n,&{}x\in\mathbb{R}^d,\ t>0,\\
     \medskip
     n(x,0)=n_0(x), &{}x\in\mathbb{R}^d,
  \end{array}\right.
\end{eqnarray}
and they proved the existence of a uniform in time $L^\infty$ bounded weak solution for system (\ref{ljgmodel}) with the supercritical
diffusion exponent $1<p<\frac{3d}{d+1}$ in the multi-dimensional space $\mathbb{R}^d$
under the condition that the $L^{\frac{d(3-p)}p}$ norm of initial data is smaller than a
universal constant. They also proved the local existence of weak solutions and a
blow-up criterion for general $L^1\cap L^\infty$ initial data. 

In \cite{first-paper}, the authors studied the following chemotaxis-Navier-Stokes system with slow $p$-Laplacian diffusion
\begin{eqnarray}
  \left\{\begin{array}{lll}
     \medskip
     n_t+u\cdot\nabla n=\nabla\cdot(|\nabla n|^{p-2}\nabla n)-\nabla\cdot(n\chi(c)\nabla c),&{} x\in\Omega,\ t>0,\\
     \medskip
     c_t+u\cdot\nabla c=\Delta c-nf(c),&{} x\in\Omega,\ t>0,\\
     \medskip
     u_t+(u\cdot\nabla) u=\Delta u+\nabla P+n\nabla\Phi,&{} x\in\Omega,\ t>0,\\
     \medskip
     \nabla\cdot u=0,&{} x\in\Omega,\ t>0.
  \end{array}\right.
\end{eqnarray}
It is proved that if $p>\frac{32}{15}$ and under appropriate structural assumptions on $f$ and $\chi$, for all sufficiently smooth initial data $(n_0,c_0,u_0)$ the model possesses at least one global weak solution.

In this paper, we shall consider the existence and the boundedness of the global weak solutions for the chemotaxis-Stokes system (\ref{CS}) with initial conditions
\be
n(x,0)=n_0(x),\ \ c(x,0)=c_0(x)\ \ {\rm and}\ \ u(x,0)=u_0(x),\ \ \ \ x\in\Omega\label{eq-i}
\ee
and the boundary conditions
\be
|\nabla n|^{p-2}\frac{\partial n}{\partial \nu}=\frac{\partial c}{\partial \nu}=0\ \ {\rm and}\ \ u=0\ \ \ \ {\rm{on}}\ \partial\Omega\label{eq-bc}
\ee
in a bounded convex domain $\Omega\subset\mathbb{R}^3$ with smooth boundary, where $\nu$ is the exterior unit normal vector on $\partial\Omega$. As for the initial data, we shall suppose that
\begin{eqnarray}\label{eq-ic}
  \left\{\begin{array}{lcl}
     \medskip
     n_{0}\in C^\omega(\overline{\Omega})\ \ {\rm for\ some}\ \omega>0\ {\rm with}\ n_0\geq0\ {\rm in}\ \Omega\ {\rm and}\ n_0\not\equiv0,\\
     \medskip
     c_0\in W^{1,\infty}(\Omega)\ {\rm is\ nonnegative},\\
     \medskip
     u_0\in D(A^\alpha)\ \ \ \ {\rm for\ some }\ \alpha\in(\frac34,1),
  \end{array}\right.
\end{eqnarray}
where $A=-\mathcal{P}\Delta$ denotes the Stokes operator in $L_{\sigma}^2(\Omega)=\left\{\varphi\in (L^2(\Omega))^3|\nabla\cdot\varphi=0\right\}$ with its domain given by $D(A)=W^{2,2}(\Omega)\cap W_0^{1,2}(\Omega)\cap L_{\sigma}^2(\Omega)$, and with $\mathcal{P}$ representing the Helmholtz projection on $L^2(\Omega)$ (\cite{Sohr-2001-book}).

Before giving the main result, let us first give the definition of weak solution.
\begin{defn}\label{defnaaaa}
Let
\bes
&&n\in L_{loc}^{1}(\overline{\Omega}\times[0, \infty)),\\
 &&c\in L^\infty_{loc}(\overline{\Omega}\times[0,\infty))\cap L_{loc}^1([0, \infty);W^{1,1}(\Omega))\ \ \ \ \ \ and\\
 && u\in L_{loc}^1([0, \infty);W^{1,1}(\Omega;\daR^3)),
\ees
be such that $n\geq0$ and $c\geq0$ in $\Omega\times(0, \infty)$, and
\be
|\nabla n|^{p-1},\ n|\nabla c|,\ \mbox{and}\ n|u|\ \mbox{belong to}\ L_{loc}^1(\overline{\Omega}\times[0, \infty)).
\ee
We call $(n, c, u)$ a {\it{global weak solution}} of the chemotaxis-Stokes system (\ref{CS}) with initial condition (\ref{eq-i}) and boundary condition (\ref{eq-bc}) if $\nabla\cdot u=0$ in the distribution sense, if
\be
-\int^\infty_0\int_\Omega n\varphi_t-\jfo n_0\varphi(\cdot,0)=-\int^\infty_0\int_\Omega |\nabla n|^{p-2}\nabla n\cdot\nabla\varphi+\int^\infty_0\int_\Omega n\nabla c\cdot\nabla\varphi+\int^\infty_0\int_\Omega nu\cdot\nabla\varphi,\label{eq1-weak-sol}
\ee
for all $\varphi\in C^\infty_0(\overline{\Omega}\times[0,\infty))$ fulfilling $\frac{\partial \varphi}{\partial \nu}=0$ on $\partial \Omega\times(0,\infty)$,
\be
-\int^\infty_0\int_\Omega c\varphi_t-\jfo c_0\varphi(\cdot,0)=-\int^\infty_0\int_\Omega \nabla c\cdot\nabla\varphi-\int^\infty_0\int_\Omega nc\varphi+\int^\infty_0\int_\Omega cu\cdot\nabla\varphi,\label{eq2-weak-sol}
\ee
for all $\varphi\in C^\infty_0(\overline{\Omega}\times[0,\infty))$, and
\be
-\int^\infty_0\int_\Omega u\cdot\varphi_t-\jfo u_0\cdot\varphi(\cdot,0)=-\int^\infty_0\int_\Omega \nabla u\cdot\nabla\varphi+\int^\infty_0\int_\Omega n\nabla\phi\cdot\varphi,\label{eq3-weak-sol}
\ee
for all $\varphi\in C^\infty_0(\Omega\times[0,\infty);\daR^3)$
such that $\nabla\cdot\varphi\equiv0$ in $\Omega\times(0,\infty)$.
\end{defn}

Our main result reads as:
\begin{thm}\label{thm1.1}
Let $\Omega\subset\mathbb{R}^3$ be a bounded convex domain with smooth boundary and $\phi\in W^{2,\infty}(\Omega)$, and
\be
p>\frac{23}{11}.\label{eq0-thm1.1}
\ee
Then for each $n_0,c_0$ and $u_0$ satisfying (\ref{eq-ic}) there exist functions
\begin{eqnarray}\label{eq1-thm1.1}
  \left\{\begin{array}{lcl}
     \medskip
     n\in L^\infty(\Omega\times(0,\infty)),\\
     \medskip
     c\in \bigcap_{s>1}L^\infty((0,\infty);W^{1,s}(\Omega))\cap C^0(\overline{\Omega}\times[0,\infty))\cap C^{1,0}(\overline{\Omega}\times(0,\infty)),\\
     \medskip
     u\in  L^\infty(\Omega\times(0,\infty))\cap L^2_{loc}([0,\infty);W^{1,2}_0(\Omega)\cap L^2_\sigma(\Omega))\cap C^0(\overline\Omega\times[0,\infty))
  \end{array}\right.
\end{eqnarray}
such that the triple $(n, c, u)$ forms a global weak solution of (\ref{CS}), (\ref{eq-i}) and (\ref{eq-bc}) in the sense of Definition \ref{defnaaaa}.

\end{thm}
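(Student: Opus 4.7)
The plan is to analyze \eqref{CS} through a non-degenerate approximation of the $p$-Laplacian diffusion, replacing $|\nabla n|^{p-2}\nabla n$ by $(|\nabla n_\varepsilon|^{2}+\varepsilon)^{(p-2)/2}\nabla n_\varepsilon$ and combining this with the usual Yosida-type regularization $u_\varepsilon \mapsto (I+\varepsilon A)^{-1}u_\varepsilon$ of the drift velocity in the $n$- and $c$-equations, so that for each $\varepsilon\in(0,1)$ the regularized problem possesses a globally smooth classical solution $(n_\varepsilon,c_\varepsilon,u_\varepsilon)$. The goal is then to establish $\varepsilon$-independent a priori estimates strong enough to pass to the limit $\varepsilon\to 0$ while recovering the boundedness claimed in \eqref{eq1-thm1.1}.

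First I would record the elementary bounds: mass conservation $\int_\Omega n_\varepsilon(\cdot,t)=\int_\Omega n_0$; the pointwise bound $0\le c_\varepsilon\le \|c_0\|_{L^\infty(\Omega)}$ from the maximum principle, which uses $\nabla\cdot u_\varepsilon=0$ and $n_\varepsilon\ge 0$; and an $L^\infty_tL^2_x\cap L^2_tW^{1,2}_x$ bound on $u_\varepsilon$ via an energy estimate for the linear Stokes equation. Because the fluid equation carries no Navier convection, higher regularity of $u_\varepsilon$ and eventual $L^\infty$-control follow from fractional-power estimates for the Stokes semigroup $e^{-tA}$ applied to the forcing $\mathcal{P}(n_\varepsilon\nabla\phi)$, provided $n_\varepsilon$ is controlled in a sufficiently large Lebesgue space.

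The core of the proof is a coupled energy estimate on the functional $\mathcal{E}_{q,r}(t):= \int_\Omega n_\varepsilon^q(\cdot,t)+\int_\Omega|\nabla c_\varepsilon(\cdot,t)|^{2r}$, in the spirit of Winkler's porous-medium treatment. Testing the $n$-equation by $n_\varepsilon^{q-1}$ generates the $p$-Laplacian dissipation $c_{p,q}\int_\Omega|\nabla n_\varepsilon^{(q+p-2)/p}|^p$ against the chemotactic cross term $(q-1)\int_\Omega n_\varepsilon^q|\nabla c_\varepsilon|^{2}$, while the fluid transport vanishes since $\nabla\cdot u_\varepsilon=0$. Testing the $c$-equation by $-\nabla\cdot(|\nabla c_\varepsilon|^{2r-2}\nabla c_\varepsilon)$ produces a dissipation of $|\nabla|\nabla c_\varepsilon|^r|^2$, an absorptive term $\int_\Omega n_\varepsilon c_\varepsilon|\nabla c_\varepsilon|^{2r}$, and a boundary integral that is non-positive thanks to convexity of $\Omega$ controlling the sign of $\partial_\nu|\nabla c_\varepsilon|^{2}$ on $\partial\Omega$. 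The cross terms are then estimated by H\"older's and Gagliardo--Nirenberg inequalities in three dimensions, exploiting the Sobolev embedding $W^{1,p}(\Omega)\hookrightarrow L^{3p/(3-p)}(\Omega)$ together with the uniform $L^\infty$ bound on $c_\varepsilon$ and the $L^\infty_tL^2_x\cap L^2_tW^{1,2}_x$ control of $u_\varepsilon$. The condition $p>\tfrac{23}{11}$ emerges precisely as the algebraic constraint under which exponents $q$ and $r$ can be chosen so that all cross terms are absorbed by the two dissipations, producing a Gronwall-type differential inequality for $\mathcal{E}_{q,r}$. This closure is the principal obstacle; the threshold improvement from $p>\tfrac{32}{15}$ in \cite{first-paper} to $p>\tfrac{23}{11}$ here reflects the absence of the $(u\cdot\nabla)u$ nonlinearity, which removes the need to pay additional regularity for $u_\varepsilon$ in the corresponding interpolation chain.

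Once $\mathcal{E}_{q,r}$ is bounded uniformly in $\varepsilon$ and $t$ for some $q>3$, a Moser-type iteration on the $n$-equation yields a uniform $L^\infty$ bound on $n_\varepsilon$; this in turn bootstraps, via maximal Sobolev regularity for the parabolic $c$-equation and the Stokes semigroup for $u_\varepsilon$, to the regularity stated in \eqref{eq1-thm1.1}. To pass to the limit, I would combine these spatial bounds with time-derivative estimates extracted from the approximate equations and invoke an Aubin--Lions argument to obtain strong convergence of a subsequence of $(n_\varepsilon,c_\varepsilon,u_\varepsilon)$ in appropriate $L^p_{\rm loc}$ spaces. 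All linear terms in \eqref{eq1-weak-sol}--\eqref{eq3-weak-sol} then pass to the limit directly; the nonlinear flux $|\nabla n_\varepsilon|^{p-2}\nabla n_\varepsilon$ is identified via the Minty--Browder monotonicity trick applied to the monotone operator $\xi\mapsto|\xi|^{p-2}\xi$, for which only weak $L^{p/(p-1)}$ convergence of the flux together with the limiting energy identity is needed.
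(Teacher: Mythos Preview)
Your outline is coherent, but it diverges from the paper in its central mechanism, and the divergence matters for the threshold $p>\tfrac{23}{11}$.

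The paper does \emph{not} close a single coupled functional $\mathcal{E}_{q,r}=\int n_\varepsilon^q+\int|\nabla c_\varepsilon|^{2r}$. Instead it proceeds in two bootstrap layers. First, it uses the entropy-type quantity $\int n_\varepsilon\ln n_\varepsilon+\tfrac12\int|\nabla c_\varepsilon|^2/c_\varepsilon+\kappa\int|u_\varepsilon|^2$ (Lemma~\ref{lem3.1}) to obtain the baseline controls $\int_t^{t+1}\!\!\int|\nabla n_\varepsilon^{1-1/p}|^p$, $\int_t^{t+1}\!\!\int|\nabla c_\varepsilon|^4$, and $\int_t^{t+1}\!\!\int|\nabla u_\varepsilon|^2$. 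Second, it runs an iteration (Lemma~\ref{lem4.1}) which, with only the $L^4$ gradient bound on $c_\varepsilon$ available, raises $\int n_\varepsilon^m$ only up to $m<\tfrac{9(p-2)}{7-3p}$ (Lemma~\ref{lemfhdaskh}). The decisive step is then to feed this partial $n_\varepsilon$-regularity back through Stokes and heat \emph{semigroup} smoothing (Lemmas~\ref{lem6.u}, \ref{lem6.c}) to upgrade $\nabla c_\varepsilon$ to $L^{2q}$ with $q=\tfrac{m_*p}{3}+m_*+p-2$, and then re-run the $n_\varepsilon$-iteration with this larger $q$. The threshold $\tfrac{23}{11}$ is exactly the value for which the second-layer iteration map $\psi(m)$ of Lemma~\ref{lem7.2} satisfies $\psi\bigl(\tfrac{9(p-2)}{7-3p}\bigr)>\tfrac{9(p-2)}{7-3p}$, so that the iteration escapes to infinity.

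Your single-functional approach is the strategy of Winkler's earlier porous-medium paper (yielding $m>\tfrac76$), whereas the present paper is modeled on the later bootstrap scheme of \cite{Winkler-JDE-2018} (yielding the sharper $m>\tfrac98$). By that analogy, your $\mathcal{E}_{q,r}$ closure is unlikely to produce $\tfrac{23}{11}$ on the nose; you assert that ``the condition $p>\tfrac{23}{11}$ emerges precisely as the algebraic constraint'' but give no computation, and I do not see how that specific number falls out of balancing $\int n_\varepsilon^q|\nabla c_\varepsilon|^2$ against $\int|\nabla n_\varepsilon^{(q+p-2)/p}|^p$ and $\int|\nabla|\nabla c_\varepsilon|^r|^2$ alone. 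The gap, then, is that your argument does not isolate where $\tfrac{23}{11}$ comes from, and the scheme you sketch plausibly gives only a weaker threshold. Minor points: the paper's regularization truncates the chemotactic flux via $F_\varepsilon$ rather than mollifying the drift velocity by $(I+\varepsilon A)^{-1}$, and the passage to the limit for the $p$-Laplacian flux relies on \cite[Lemma~6.2]{first-paper} rather than a full Minty--Browder argument, though your variant would also work.
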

The rest of this paper is organized as follows. In Section 2, we introduce a family of regularized problems and give some preliminary properties. Based on an energy-type inequality, a priori estimates are given in Section 3. Section 4 to section 7 are several useful uniform estimates. Finally, we give the proof of the main result in Section 8.
\section{Regularized problems}
The global weak solution of (\ref{CS}) is constructed as the limit of appropriate regularized problems. We regularize the cross-diffusive term in (\ref{CS}) by introducing a family $(\chi_\varepsilon)_{\varepsilon\in(0,1)}\subset C^\infty_0([0,\infty))$ (cf.\cite{Winkler-JDE-2018}) fulfilling
\be\label{regular-chi}
0\leq\chi_\varepsilon\leq1\ {\rm in}\ [0,\infty),\ \ \ \ \chi_\varepsilon\equiv1\ {\rm in\ }[0,\frac1\varepsilon]\ \ \ \ {\rm and}\ \ \ \ \chi_\varepsilon\equiv0\ {\rm in}\ [\frac2\varepsilon,\infty),
\ee
and letting
\be\label{regular-F}
F_\varepsilon(s)=\int^s_0\chi_\varepsilon(\sigma)d\sigma,\ \ \ \ \ \ s\geq0,
\ee
for $\varepsilon\in(0,1)$. Then $F_\varepsilon\in C^\infty([0,\infty))$ satisfies
\be\label{eq1-F}
0\leq F_\varepsilon(s)\leq s\ \ \ \ {\rm and}\ \ \ \ 0\leq F'_\varepsilon(s)\leq1,\ \ \ \ \ \ {\rm for\ all\ }s\geq0
\ee
as well as
\be\label{eq2-F}
F_\varepsilon(s)\nearrow s\ \ \ \ {\rm and}\ \ \ \ F'_\varepsilon(s)\nearrow1,\ \ \ \ \ \ {\rm for\ all\ }s>0,\ \ \ \ \ \ {\rm as}\ \varepsilon\searrow0.
\ee
According to the idea from \cite{Winkler-JDE-2018} (see also \cite{Tao&Winkler-AIHP-2013,Zhang&Li-JDE-2015}), let us consider the approximate variants of (\ref{CS}), (\ref{eq-i}) and (\ref{eq-bc}).
\begin{eqnarray}\label{model-approximate}
  \left\{\begin{array}{lcl}
     \medskip
       \partial_t{n_\varepsilon}+{u_\varepsilon}\cdot\nabla {n_\varepsilon}=\nabla\cdot\left((|\nabla {n_\varepsilon}|^2+\varepsilon)^\frac{p-2}2\nabla {n_\varepsilon}\right)-\nabla\cdot({n_\varepsilon}{F_\varepsilon}'({n_\varepsilon})\nabla {c_\varepsilon}),&& x\in\Omega,\ t>0,\\
     \medskip
     \partial_t {c_\varepsilon}+{u_\varepsilon}\cdot\nabla {c_\varepsilon}=\Delta {c_\varepsilon}-{F_\varepsilon}({n_\varepsilon})c_\varepsilon,&& x\in\Omega,\ t>0,\\
     \medskip
     \partial_t{u_\varepsilon}=\Delta {u_\varepsilon}+\nabla P_\varepsilon+{n_\varepsilon}\nabla\phi,&& x\in\Omega,\ t>0,\\
     \medskip
     \nabla\cdot {u_\varepsilon}=0,&& x\in\Omega,\ t>0,\\
     \medskip
     \frac{\partial {n_\varepsilon}}{\partial \nu}=\frac{\partial {c_\varepsilon}}{\partial \nu}=0,\ {u_\varepsilon}=0,  && x\in\partial\Omega,\ t>0,\\
     \medskip
     {n_\varepsilon}(x,0)=n_0,\ {c_\varepsilon}(x,0)=c_0,\ {u_\varepsilon}(x,0)=u_0,\   && x\in\Omega
  \end{array}\right.
\end{eqnarray}
for $\varepsilon\in(0,1)$, where the families of approximate initial data $n_{0\varepsilon}\geq 0$, $c_{0\varepsilon}\geq 0$ and $u_{0\varepsilon}$ have the properties
\begin{eqnarray}\label{approximate-initialdata-n}
  \left\{\begin{array}{lcl}
     \medskip
     n_{0\varepsilon}\in C^\infty_0(\Omega),\ \ \int_\Omega n_{0\varepsilon}=\int_\Omega n_0,\ \ \ \ {\rm{for\ all}}\ \varepsilon\in(0,1),\\
     \medskip
     \{n_{0\varepsilon}\}_{\eps\in(0,1)}\ {\rm uniformly\ bounded\ in}\ L^\infty(\Omega),\\
     \medskip
     n_{0\varepsilon}\rightarrow n_0\ \ {\rm{in}}\ L^2(\Omega)\ \ \ \ as\ \ \varepsilon\searrow0,
  \end{array}\right.
\end{eqnarray}
\begin{eqnarray}\label{approximate-initialdata-c}
  \left\{\begin{array}{lcl}
     \medskip
     \sqrt{c_{0\varepsilon}}\in C^\infty_0(\Omega),\ \ \|c_{0\varepsilon}\|_{L^\infty(\Omega)}\leq\|c_{0}\|_{L^\infty(\Omega)},\ \ \ \ {\rm{for\ all}}\ \varepsilon\in(0,1),\\
     \medskip
     \sqrt{c_{0\varepsilon}}\rightarrow \sqrt{c_0}\ \ a.e.\ {\rm{in}}\ \Omega\ \ and\ \ W^{1,2}(\Omega)\ \ \ \ as\ \ \varepsilon\searrow0,
  \end{array}\right.
\end{eqnarray}
and
\begin{eqnarray}\label{approximate-initialdata-u}
  \left\{\begin{array}{lcl}
     \medskip
     u_{0\varepsilon}\in C^\infty_{0,\sigma}(\Omega),\ \ {\rm{with}}\ \ \|u_{0\varepsilon}\|_{L^2(\Omega)}=\|u_{0}\|_{L^2(\Omega)},\ \ \ \ {\rm{for\ all}}\ \varepsilon\in(0,1),\\
     \medskip
     u_{0\varepsilon}\rightarrow u_0\ \ \ {\rm{in}}\ L^2(\Omega)\ \ \ \ as\ \ \varepsilon\searrow0.
  \end{array}\right.
\end{eqnarray}
All the above approximate problems admit globally defined classical solution.
\begin{lem}\label{lem2.1}
Let $p\geq2$, then for each $\varepsilon\in(0,1)$, there exist functions
\begin{eqnarray}
  \left\{\begin{array}{lcl}
     \medskip
       {n_\varepsilon}\in C^0(\overline{\Omega}\times[0,\infty))\cap C^{2,1}(\overline{\Omega}\times(0,\infty)),\\
     \medskip
     {c_\varepsilon}\in C^0(\overline{\Omega}\times[0,\infty))\cap C^{2,1}(\overline{\Omega}\times(0,\infty)),\ \ \ \ \\
     \medskip
     {u_\varepsilon}\in C^0(\overline{\Omega}\times[0,\infty);\mathbb{R}^3)\cap C^{2,1}(\overline{\Omega}\times(0,\infty);\mathbb{R}^3),\\
     \medskip
     P_\varepsilon\in C^{1,0}(\overline{\Omega}\times(0,\infty)),
  \end{array}\right.
\end{eqnarray}
such that $({n_\varepsilon},{c_\varepsilon},{u_\varepsilon},\zzh P)$ solves (\ref{model-approximate}) classically in $\Omega\times(0,\infty)$, and such that $\zzh n$ and $\zzh c$ are nonnegative in $\Omega\times(0,\infty)$.
\end{lem}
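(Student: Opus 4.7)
The plan follows the well-trodden three-step scheme for regularized chemotaxis--fluid problems, closely paralleling the approaches used in \cite{Winkler-JDE-2018}, \cite{Tao&Winkler-AIHP-2013} and \cite{Zhang&Li-JDE-2015}: I would first establish local classical existence via a Banach fixed-point iteration in a suitable parabolic space, then verify non-negativity of $n_\varepsilon$ and $c_\varepsilon$ via the parabolic maximum principle, and finally extend to $T_{\max}=\infty$ via $\varepsilon$-dependent a priori bounds that exploit the non-degeneracy encoded by $\varepsilon>0$ together with the global boundedness of $F_\varepsilon$ and $F'_\varepsilon$.

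For the local step I would fix $\alpha\in(\tfrac34,1)$ and some $q>3$, and work on short time intervals $[0,T]$ in the space
\[
X_T := C^0([0,T];L^\infty(\Omega))\times C^0([0,T];W^{1,q}(\Omega))\times C^0([0,T];D(A^\alpha)),
\]
restricted to a closed ball $B$ around the initial data $(n_{0\varepsilon},c_{0\varepsilon},u_{0\varepsilon})$. A map $\Phi\colon B\to X_T$ is defined by decoupling the system: given $(\bar n,\bar c,\bar u)\in B$, first solve the linear Stokes equation $\partial_t u=\Delta u+\nabla P+\bar n\nabla\phi$, $\nabla\cdot u=0$ using the analytic Stokes semigroup and the variation-of-constants formula; next solve the linear parabolic equation $\partial_t c+\bar u\cdot\nabla c=\Delta c-F_\varepsilon(\bar n)c$ under Neumann data, whose zero-order coefficient is globally bounded by $2/\varepsilon$ by construction; finally solve the quasilinear $n$-equation, whose diffusion coefficient $(|\nabla n|^2+\varepsilon)^{(p-2)/2}$ is uniformly elliptic with ellipticity constant bounded below by $\varepsilon^{(p-2)/2}$ and whose cross-diffusion coefficient $nF'_\varepsilon(n)$ is bounded and Lipschitz in $n$, using classical quasilinear parabolic theory in the spirit of Amann or Ladyzhenskaya--Solonnikov--Uraltseva to obtain a $C^{2,1}$ solution. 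For $T$ sufficiently small (depending on $\varepsilon$ and the initial data), standard parabolic Schauder estimates show that $\Phi$ maps $B$ into itself and is a contraction, so its fixed point is a local classical solution.

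Non-negativity follows directly: applying the parabolic maximum principle to the $c$-equation yields $c_\varepsilon\geq 0$ since $c_{0\varepsilon}\geq 0$ and $F_\varepsilon(n_\varepsilon)\geq 0$, while rewriting the $n$-equation in non-divergence form and using the boundedness of $F_\varepsilon$ and $F'_\varepsilon$ gives $n_\varepsilon\geq 0$ from $n_{0\varepsilon}\geq 0$. For the global extension I would invoke a standard extensibility criterion: if $T_{\max}<\infty$ then some norm of $(n_\varepsilon,c_\varepsilon,u_\varepsilon)$ must blow up as $t\nearrow T_{\max}$. This is precluded by the chain (a) $\|c_\varepsilon\|_{L^\infty}\leq\|c_0\|_{L^\infty}$ by the maximum principle; (b) linear parabolic regularity for the $c$-equation yields $W^{1,q}$-bounds depending only on the $L^\infty$-bound on $n_\varepsilon$ and on $\varepsilon$; (c) a Moser iteration on the $n$-equation, whose drift $u_\varepsilon$ is divergence-free and whose cross-diffusion coefficient is controlled by the $W^{1,q}$ bound on $\nabla c_\varepsilon$ together with the uniform ellipticity, produces $L^\infty$-bounds on $n_\varepsilon$ on $[0,T_{\max})$; (d) Stokes semigroup estimates, using $n_\varepsilon\nabla\phi$ as forcing, then bound $u_\varepsilon$ in $D(A^\alpha)$ and hence in $L^\infty$.

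The main obstacle in this plan is the quasilinear $n$-equation: the fixed-point space must carry $W^{1,q}$-regularity on $c$ with $q>3$ so that $\nabla c_\varepsilon$ enters the quasilinear equation in a Hölder class amenable to classical theory, and the $\varepsilon$-dependent ellipticity constant $\varepsilon^{(p-2)/2}$ pervades every estimate. This $\varepsilon$-dependence is harmless for Lemma \ref{lem2.1}, which treats each $\varepsilon>0$ separately; however, it is precisely why the bulk of the paper is devoted to deriving $\varepsilon$-uniform bounds in the subsequent sections before letting $\varepsilon\searrow 0$.
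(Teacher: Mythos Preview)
Your proposal is correct and follows essentially the same three-step scheme as the paper: local classical existence via a fixed-point argument, nonnegativity by the maximum principle, and global continuation via an extensibility criterion ruled out by $\varepsilon$-dependent a priori bounds (the paper simply cites \cite{Tao&Winkler-AIHP-2013,Winkler-CPDE-2012,Winkler-AIHP-2016,first-paper} for the local step and \cite[Lemma~4.1]{first-paper} for the global extension). The only notable difference is that the paper invokes the \emph{Schauder} fixed-point theorem for local existence, whereas you set up a \emph{Banach} contraction; for the quasilinear $n$-equation with diffusion $(|\nabla n|^2+\varepsilon)^{(p-2)/2}$, the Schauder route is slightly cleaner because it needs only continuity and compactness of the solution map rather than the Lipschitz dependence a contraction requires, but your approach is also viable and the rest of the argument is identical.
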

\begin{proof}
Through the well-established methods involving the Schauder fixed point theorem and the standard regularity theories of parabolic equations and the Stokes system (see \cite{Tao&Winkler-AIHP-2013,Winkler-CPDE-2012,Winkler-AIHP-2016,first-paper} for instance), we can see that there exist $T_{max,\eps}\in(0,\infty]$ and at least one classical solution $(\zzh n,\zzh c, \zzh u, \zzh P)$:
\begin{eqnarray*}
  \left\{\begin{array}{lcl}
     \medskip
       {n_\varepsilon}\in C^0(\overline{\Omega}\times[0,T_{max,\varepsilon}))\cap C^{2,1}(\overline{\Omega}\times(0,T_{max,\varepsilon})),\\
     \medskip
     {c_\varepsilon}\in C^0(\overline{\Omega}\times[0,T_{max,\varepsilon}))\cap C^{2,1}(\overline{\Omega}\times(0,T_{max,\varepsilon})),\\
     \medskip
     {u_\varepsilon}\in C^0(\overline{\Omega}\times[0,T_{max,\varepsilon});\mathbb{R}^3)\cap C^{2,1}(\overline{\Omega}\times(0,T_{max,\varepsilon});\mathbb{R}^3),\\
     \medskip
     P_\varepsilon\in C^{1,0}(\overline{\Omega}\times(0,T_{max,\varepsilon})),
  \end{array}\right.
\end{eqnarray*}
which is such that $\zzh n\geq0$, $\zzh c\geq0$ in $\Omega\times(0,T_{max,\eps})$ and $\zzh c\in C^0([0,T_{max,\eps}),W^{1,q}(\Omega))$ for all $q\geq1$.  Moreover, if $T_{max,\varepsilon}<\infty$, then
\be
\|{n_\varepsilon}(\cdot,t)\|_{L^\infty(\Omega)}+\|{c_\varepsilon}(\cdot,t)\|_{W^{1,q}(\Omega)}+\|A^\alpha{u_\varepsilon}(\cdot,t)\|_{L^2(\Omega)}\rightarrow\infty,\ \ t\nearrow T_{max,\varepsilon}\nonumber
\ee
for all $q>3$ and $\alpha>\frac34$. And then using the similar arguments in \cite[Lemma 4.1]{first-paper} ( for the case of chemotaxis-Navier-Stokes system involving $p$-Laplacian diffusion) we actually have $T_{max,\eps}=\infty$.
\end{proof}

The following two estimates for the classical solutions ${n_\varepsilon}$ and ${c_\varepsilon}$ are basic but important in the proof of our result.
\begin{lem}\label{lem2.2}
For each $\varepsilon\in(0,1)$, the solution of (\ref{model-approximate}) satisfies
\be
\int_\Omega{n_\varepsilon}(\cdot,t)=\int_\Omega n_0,\ \ \ \ for\ all\ t>0\label{eq-n-mass-conservation}
\ee
and
\be
\|{c_\varepsilon}(\cdot,t)\|_{L^\infty(\Omega)}\leq\| c_0\|_{L^\infty(\Omega)}=:s_0,\ \ \ \ for\ all\ t>0.\label{eq-c-maxmum-principle-s0}
\ee
\end{lem}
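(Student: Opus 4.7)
The plan is to derive both identities by direct integration and an application of the parabolic maximum principle to the equations in the regularized system (\ref{model-approximate}).

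First, to establish the mass conservation identity (\ref{eq-n-mass-conservation}), I would integrate the first equation of (\ref{model-approximate}) over $\Omega$. The divergence terms $\nabla\cdot\bigl((|\nabla n_\varepsilon|^2+\varepsilon)^{(p-2)/2}\nabla n_\varepsilon\bigr)$ and $\nabla\cdot\bigl(n_\varepsilon F_\varepsilon'(n_\varepsilon)\nabla c_\varepsilon\bigr)$ become boundary integrals that vanish thanks to the no-flux conditions $\partial n_\varepsilon/\partial\nu = \partial c_\varepsilon/\partial\nu = 0$ on $\partial\Omega$. The transport term $\int_\Omega u_\varepsilon\cdot\nabla n_\varepsilon$ equals $\int_\Omega \nabla\cdot(n_\varepsilon u_\varepsilon)$ because $\nabla\cdot u_\varepsilon = 0$, and the latter vanishes since $u_\varepsilon = 0$ on $\partial\Omega$. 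This yields $\frac{d}{dt}\int_\Omega n_\varepsilon = 0$, and the claim follows from the normalization $\int_\Omega n_{0\varepsilon} = \int_\Omega n_0$ recorded in (\ref{approximate-initialdata-n}).

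For the pointwise bound (\ref{eq-c-maxmum-principle-s0}), I would apply the parabolic maximum principle to the second equation of (\ref{model-approximate}), rewritten in the form
\[
\partial_t c_\varepsilon + u_\varepsilon\cdot\nabla c_\varepsilon - \Delta c_\varepsilon \;=\; -\,F_\varepsilon(n_\varepsilon)\,c_\varepsilon .
\]
Here $F_\varepsilon(n_\varepsilon) \geq 0$ by (\ref{eq1-F}) together with $n_\varepsilon \geq 0$, and $c_\varepsilon \geq 0$ by Lemma \ref{lem2.1}, so the right-hand side is nonpositive. Under the homogeneous Neumann condition $\partial c_\varepsilon/\partial\nu = 0$, the standard parabolic comparison principle then yields $c_\varepsilon(\cdot,t) \leq \|c_{0\varepsilon}\|_{L^\infty(\Omega)}$ for all $t>0$, and (\ref{approximate-initialdata-c}) controls the latter by $\|c_0\|_{L^\infty(\Omega)} = s_0$.

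No substantial obstacle is anticipated: the regularization was designed precisely to preserve these structural features. The cutoff $\chi_\varepsilon \geq 0$ guarantees $F_\varepsilon \geq 0$ so that the $c_\varepsilon$-equation remains dissipative, the no-flux and Dirichlet boundary data allow the flux and convection terms to drop under integration, and because the fluid equation is Stokes rather than Navier--Stokes no extra convective contribution arises (though it would not have mattered either, since $\nabla\cdot u_\varepsilon=0$). Both conclusions therefore reduce to direct applications of integration by parts and of the maximum principle for linear convection-diffusion equations.
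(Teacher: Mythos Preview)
Your proposal is correct and follows exactly the same approach as the paper: integrate the first equation in (\ref{model-approximate}) to obtain mass conservation, and apply the parabolic maximum principle to the second equation to obtain the $L^\infty$ bound on $c_\varepsilon$. The paper's proof is just a two-sentence sketch of precisely these two steps, so your version simply supplies the details the authors omitted.
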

\begin{proof}
Integrating the first equation in (\ref{model-approximate}) we obtain (\ref{eq-n-mass-conservation}).
And an application of the maximum principle to the second equation in (\ref{model-approximate}) gives (\ref{eq-c-maxmum-principle-s0}).
\end{proof}

\section{An energy-type inequality}
In this section, we derive the following energy-type inequality of the approximate problems (\ref{model-approximate}) and give some consequence.

\begin{lem}\label{lem3.1} Assume $p\geq2$, then there exist $\kappa>0$ and $C>0$ (independent of $\eps$) such that
\bes
&&\frac d{dt}\left\{\int_\Omega{n_\varepsilon}\ln {n_\varepsilon}+\frac12\int_\Omega\frac{|\nabla\zzh c|^2}{\zzh c}+\kappa\jfo|\zzh u|^2\right\}\nonumber\\
&&\ \ \ \ \ \ \ \ \ \ \ \ \ \ \ \ +\frac1C\left\{\int_\Omega{n_\varepsilon}\ln {n_\varepsilon}+\frac12\int_\Omega\frac{|\nabla\zzh c|^2}{\zzh c}+\kappa\jfo|\zzh u|^2\right\}\nonumber\\
&&\ \ \ \ \ \ \ \ \ \ \ \ \ \ \ \ +\frac1C\left\{\jfo|\nabla\zzh n^{1-1/p}|^p+\int_\Omega\frac{|\nabla\zzh c|^4}{\zzh c^3}+\jfo|\nabla\zzh u|^2\right\}\leq C\sjjl.\label{eq-lem3.1}
\ees
\end{lem}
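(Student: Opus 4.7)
The plan is to derive the inequality via three separate testings and then combine, using convexity of $\Omega$ to suppress a bad boundary term and interpolation to absorb the functional itself into the dissipation.

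First, I would test the first equation of \eqref{model-approximate} by $\ln n_\varepsilon + 1$. The convection term vanishes by incompressibility (combined with $u_\varepsilon|_{\partial\Omega}=0$), and integrating by parts the diffusion and chemotactic fluxes (using $\partial c_\varepsilon/\partial\nu = 0$ and the identity $n F_\varepsilon'(n)\nabla c\cdot\nabla \ln n = \nabla F_\varepsilon(n)\cdot\nabla c$) yields
\[
\frac{d}{dt}\int_\Omega n_\varepsilon \ln n_\varepsilon + \int_\Omega (|\nabla n_\varepsilon|^2+\varepsilon)^{\frac{p-2}{2}}\frac{|\nabla n_\varepsilon|^2}{n_\varepsilon} = -\int_\Omega F_\varepsilon(n_\varepsilon)\Delta c_\varepsilon,
\]
in which the dissipative integrand dominates a positive multiple of $|\nabla n_\varepsilon^{1-1/p}|^p$ for $p\geq 2$. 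Next, following the Winkler-type identity for $\tfrac12\int|\nabla c_\varepsilon|^2/c_\varepsilon$ (cf.\ \cite{Winkler-CPDE-2012,Winkler-AIHP-2016}), direct computation from the $c_\varepsilon$-equation leads to an identity of the form
\[
\tfrac12\tfrac{d}{dt}\int_\Omega\tfrac{|\nabla c_\varepsilon|^2}{c_\varepsilon}+\int_\Omega c_\varepsilon|D^2\ln c_\varepsilon|^2 = \int_\Omega F_\varepsilon(n_\varepsilon)\Delta c_\varepsilon + \tfrac12\int_{\partial\Omega}\tfrac{1}{c_\varepsilon}\tfrac{\partial|\nabla c_\varepsilon|^2}{\partial\nu} + \mathcal R_u,
\]
where $\mathcal R_u$ collects the $u_\varepsilon$-coupling terms. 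Convexity of $\Omega$ together with $\partial c_\varepsilon/\partial\nu = 0$ renders the boundary integral nonpositive, so it may be discarded. The standard pointwise bound $c|D^2\ln c|^2 \geq \eta\,|\nabla c|^4/c^3$ then yields the desired dissipation in $c_\varepsilon$. Finally, testing the Stokes equation by $u_\varepsilon$ produces
\[
\tfrac12\tfrac{d}{dt}\int_\Omega|u_\varepsilon|^2+\int_\Omega|\nabla u_\varepsilon|^2 = \int_\Omega n_\varepsilon u_\varepsilon\cdot\nabla\phi.
\]

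Adding the three computations with weights $1$, $1$, $\kappa$ for some $\kappa>0$ to be fixed, the $\pm\int F_\varepsilon(n_\varepsilon)\Delta c_\varepsilon$ terms cancel. The residual couplings---the $\mathcal R_u$ pieces and the buoyancy $\int n_\varepsilon u_\varepsilon\cdot\nabla\phi$---can be absorbed by a small fraction of the triple dissipation $\mathcal D_\varepsilon := \int|\nabla n_\varepsilon^{1-1/p}|^p+\int|\nabla c_\varepsilon|^4/c_\varepsilon^3+\int|\nabla u_\varepsilon|^2$ via Young's inequality, the uniform bound $\|c_\varepsilon\|_{L^\infty}\leq s_0$ from Lemma \ref{lem2.2}, and Sobolev/Poincaré for $u_\varepsilon$ (with $\phi\in W^{2,\infty}$ providing $|\nabla\phi|\leq C$). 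Choosing $\kappa$ small enough for these absorptions to occur simultaneously yields $\tfrac{d}{dt}\mathcal E_\varepsilon+\lambda\mathcal D_\varepsilon\leq C$, where $\mathcal E_\varepsilon$ is the functional in braces on the left of \eqref{eq-lem3.1}.

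To obtain the additional $\tfrac1C\mathcal E_\varepsilon$ term on the left of \eqref{eq-lem3.1}, it remains to show $\mathcal E_\varepsilon\leq \tfrac{\lambda}{2}\mathcal D_\varepsilon+C$. The two summands $\int|\nabla c_\varepsilon|^2/c_\varepsilon$ and $\int|u_\varepsilon|^2$ are handled by Cauchy-Schwarz with $c_\varepsilon\leq s_0$ (trading against $\int|\nabla c_\varepsilon|^4/c_\varepsilon^3$) and by Poincaré respectively. The main obstacle is the entropy part $\int n_\varepsilon\ln n_\varepsilon$: setting $v_\varepsilon:=n_\varepsilon^{(p-1)/p}$, the mass conservation $\int n_\varepsilon=\int n_0$ (Lemma \ref{lem2.2}) furnishes a free $L^{p/(p-1)}(\Omega)$-bound on $v_\varepsilon$, and Gagliardo-Nirenberg based on the embedding $W^{1,p}(\Omega)\hookrightarrow L^{3p/(3-p)}(\Omega)$ (valid in 3D for $2\leq p<3$) combined with the elementary inequality $s\ln s\leq C_\delta(1+s^{1+\delta})$ for small $\delta>0$ bounds $\int n_\varepsilon\ln n_\varepsilon$ by a prescribed small multiple of $\int|\nabla v_\varepsilon|^p=\int|\nabla n_\varepsilon^{1-1/p}|^p$ plus a constant, uniformly in $\varepsilon$. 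Matching interpolation exponents so that the $p$-power of the gradient suffices is the core arithmetic step; once achieved, subtracting $\tfrac{\lambda}{2}\mathcal D_\varepsilon$ from both sides of the differential inequality closes the proof.
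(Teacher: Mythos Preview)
Your overall strategy coincides with the paper's: derive the entropy identity for $n_\varepsilon$, the Winkler functional identity for $c_\varepsilon$, and the $L^2$ energy identity for $u_\varepsilon$; exploit the cancellation of the $\pm\int_\Omega F_\varepsilon(n_\varepsilon)\Delta c_\varepsilon$ contributions and the sign of the convexity-induced boundary integral; then close by Gagliardo--Nirenberg and Poincar\'e. The one substantive gap is the direction in which you fix $\kappa$.

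After integrating by parts (as in the paper's computation), the fluid-coupling residual from the $c_\varepsilon$-identity becomes
\[
\mathcal R_u = -\int_\Omega \frac{1}{c_\varepsilon}\,\nabla c_\varepsilon\cdot(\nabla u_\varepsilon\cdot\nabla c_\varepsilon)
\le \delta\int_\Omega\frac{|\nabla c_\varepsilon|^4}{c_\varepsilon^3} + \frac{s_0}{4\delta}\int_\Omega|\nabla u_\varepsilon|^2,
\]
and the coefficient $s_0/(4\delta)$ in front of $\int_\Omega|\nabla u_\varepsilon|^2$ cannot be made small while keeping $\delta$ below the available $c_\varepsilon$-dissipation constant $\eta$. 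Since $\mathcal R_u$ enters with weight $1$ whereas the only $\int_\Omega|\nabla u_\varepsilon|^2$ on the dissipative side carries weight $\kappa$, absorption forces $\kappa$ to be \emph{large}; the paper indeed takes $\kappa=C_2+1$ with $C_2=\tfrac12(2+\sqrt3)^2 s_0$. Choosing $\kappa$ small, as you propose, makes this step fail outright. The buoyancy term $\kappa\int_\Omega n_\varepsilon u_\varepsilon\cdot\nabla\phi$ is, by contrast, harmless for large $\kappa$: after Sobolev and Gagliardo--Nirenberg against the conserved mass one obtains a bound by $\|\nabla n_\varepsilon^{1-1/p}\|_{L^p}^{p/(2(2p-3))}$, and since $\tfrac{p}{2(2p-3)}<p$ for $p\ge 2$, Young's inequality absorbs it into the weight-$1$ dissipation $\int_\Omega|\nabla n_\varepsilon^{1-1/p}|^p$ with an arbitrarily small coefficient, no matter how large $\kappa$ is---the price is only a larger additive constant on the right. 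So the correct order is: fix $\kappa$ large enough to swallow $\mathcal R_u$, then tune the Young parameter in the buoyancy estimate accordingly.

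Two minor points: your displayed $c_\varepsilon$-identity is missing the nonpositive term $-\tfrac12\int_\Omega F_\varepsilon(n_\varepsilon)|\nabla c_\varepsilon|^2/c_\varepsilon$ on the right (harmless, but the ``$=$'' should be ``$\le$''); and the bound $\int_\Omega c_\varepsilon|D^2\ln c_\varepsilon|^2\ge \eta\int_\Omega|\nabla c_\varepsilon|^4/c_\varepsilon^3$ is an integral inequality (via integration by parts, cf.\ \cite[Lemma~3.3]{Winkler-CPDE-2012}), not a pointwise one.
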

\begin{proof}
By means of straightforward computation using the first two equations in (\ref{model-approximate})(cf. \cite{Winkler-CPDE-2012} for details), we obtain the identity
\begin{align}
&\frac d{dt}\left\{\int_\Omega{n_\varepsilon}\ln {n_\varepsilon}+\frac12\int_\Omega\frac{|\nabla\zzh c|^2}{\zzh c}\right\}+\int_\Omega(|\nabla{n_\varepsilon}|^2+\varepsilon)^\frac{p-2}2\frac{|\nabla{n_\varepsilon}|^2}{{n_\varepsilon}}+\jfo\zzh c|D^2 \ln\zzh c|^2\nonumber\\
&\ \ \ \ \ \ \ \ \ \ \ \ \ \ \ \ \ \ \ \ =-\frac12\jfo\frac{|\nabla\zzh c|^2}{\zzh c^2}(\zzh u\cdot\nabla\zzh c)+\jfo\frac{\Delta\zzh c}{\zzh c}(\zzh u\cdot\nabla\zzh c)\nonumber\\
&\ \ \ \ \ \ \ \ \ \ \ \ \ \ \ \ \ \ \ \ \ \ \ -\frac12\jfo\zzh F(\zzh n)\frac{|\nabla\zzh c|^2}{\zzh c}+\frac12\int_{\partial\Omega}\frac1{\zzh c}\frac{\partial|\nabla\zzh c|^2}{\partial\nu}\sjj.\label{eq1-lem3.1}
\end{align}
Using the inequalities
\bess
\int_\Omega(|\nabla{n_\varepsilon}|^2+\varepsilon)^\frac{p-2}2\frac{|\nabla{n_\varepsilon}|^2}{{n_\varepsilon}}\geq\jfo \frac{|\nabla\zzh n|^p}{\zzh n}=\left(\frac p{p-1}\right)^p\jfo|\nabla\zzh n^{1-1/p}|^p\sjj
\eess
and
$\int_\Omega\frac{|\nabla\zzh c|^4}{\zzh c^3}\leq C_1\jfo\zzh c|D^2\ln \zzh c|^2$ for all $t>0$ (\cite[Lemma 3.3]{Winkler-CPDE-2012}) with $C_1=(2+\sqrt 3)^2$, we can derive from (\ref{eq1-lem3.1}) that
\begin{align}
&\frac d{dt}\left\{\int_\Omega{n_\varepsilon}\ln {n_\varepsilon}+\frac12\int_\Omega\frac{|\nabla\zzh c|^2}{\zzh c}\right\}+\left(\frac p{p-1}\right)^p\jfo|\nabla\zzh n^{1-1/p}|^p+\frac1{C_1}\int_\Omega\frac{|\nabla\zzh c|^4}{\zzh c^3}\nonumber\\
&\ \ \ \ \ \ \ \ \ \ \ \ \ \ \ \ \ \ \ \ \leq-\frac12\jfo\frac{|\nabla\zzh c|^2}{\zzh c^2}(\zzh u\cdot\nabla\zzh c)+\jfo\frac{\Delta\zzh c}{\zzh c}(\zzh u\cdot\nabla\zzh c)\nonumber\\
&\ \ \ \ \ \ \ \ \ \ \ \ \ \ \ \ \ \ \ \ \ \ \ -\frac12\jfo\zzh F(\zzh n)\frac{|\nabla\zzh c|^2}{\zzh c}+\frac12\int_{\partial\Omega}\frac1{\zzh c}\frac{\partial|\nabla\zzh c|^2}{\partial\nu}\sjj.\label{sdafahajjj}
\end{align}

From the facts that the nonnegativity of $\zzh F$ and $\frac{\partial|\nabla\zzh c|^2}{\zzh c}\leq 0$ on $\partial\Omega\times(0,\infty)$ since $\Omega$ due to convex (\cite[Lemma 2.I.1]{Lions-1980-ARMA}), we know that the last two summands on the right of (\ref{eq1-lem3.1}) are nonpositive. This shows that (\ref{sdafahajjj}) leads to theinequality
\begin{align}
&\frac d{dt}\left\{\int_\Omega{n_\varepsilon}\ln {n_\varepsilon}+\frac12\int_\Omega\frac{|\nabla\zzh c|^2}{\zzh c}\right\}+\left(\frac p{p-1}\right)^p\jfo|\nabla\zzh n^{1-1/p}|^p+\frac1{C_1}\int_\Omega\frac{|\nabla\zzh c|^4}{\zzh c^3}\nonumber\\
&\ \ \ \ \ \ \ \ \ \ \ \ \ \ \ \ \ \ \ \ \leq-\frac12\jfo\frac{|\nabla\zzh c|^2}{\zzh c^2}(\zzh u\cdot\nabla\zzh c)+\jfo\frac{\Delta\zzh c}{\zzh c}(\zzh u\cdot\nabla\zzh c)\sjj.\label{kfjsaggfarga}
\end{align}

To estimate the right hand side of (\ref{kfjsaggfarga}), we make the following computation by using integration by parts, the identity $\jfo\frac1{\zzh c}\nabla\zzh c\cdot(D^2\zzh c\cdot\zzh u)=\frac12\jfo\frac{|\nabla\zzh c|^2}{\zzh c^2}(\zzh u\cdot\nabla\zzh c)$ and Young's inequality.

\bes\label{eq3-lem3.1}
&&\ \ \ -\frac12\jfo\frac{|\nabla\zzh c|^2}{\zzh c^2}(\zzh u\cdot\nabla\zzh c)+\jfo\frac{\Delta\zzh c}{\zzh c}(\zzh u\cdot\nabla\zzh c)\nonumber\\
&&=\frac12\jfo\frac{|\nabla\zzh c|^2}{\zzh c^2}(\zzh u\cdot\nabla\zzh c)-\jfo\frac1{\zzh c}\nabla\zzh c\cdot(D^2\zzh c\cdot\zzh u)-\jfo\frac1{\zzh c}\nabla\zzh c\cdot(\nabla\zzh u\cdot\nabla\zzh c)\nonumber\\
&&=-\jfo\frac1{\zzh c}\nabla\zzh c\cdot(\nabla\zzh u\cdot\nabla\zzh c)\nonumber\\
&&\leq\frac1{2C_1}\jfo\frac{|\nabla\zzh c|^4}{\zzh c^3}+\frac{C_1}2\jfo|\zzh c|\cdot|\nabla\zzh u|^2\nonumber\\
&&\leq\frac1{2C_1}\jfo\frac{|\nabla\zzh c|^4}{\zzh c^3}+C_2\jfo|\nabla\zzh u|^2\sjj
\ees
with $C_2=\frac{C_1}2s_0$, where we have used (\ref{eq-c-maxmum-principle-s0}) in the last inequality. So we obtain
\begin{align}
&\frac d{dt}\left\{\int_\Omega{n_\varepsilon}\ln {n_\varepsilon}+\frac12\int_\Omega\frac{|\nabla\zzh c|^2}{\zzh c}\right\}+\left(\frac p{p-1}\right)^p\jfo|\nabla\zzh n^{1-1/p}|^p+\frac1{2C_1}\int_\Omega\frac{|\nabla\zzh c|^4}{\zzh c^3}\nonumber\\
&\ \ \ \ \ \ \ \ \ \ \ \ \ \ \ \ \ \ \ \ \leq C_2\jfo|\nabla\zzh u|^2\sjj.\label{kdregasdgvv}
\end{align}

To treat the right hand side of (\ref{kdregasdgvv}), we test the third equation in (\ref{model-approximate}) by $\zzh u$ and use the Sobolev embedding $W^{1,2}_0(\Omega)\hookrightarrow \lp{6}$ to obtain
\bes
\frac12\frac d{dt}\jfo|\zzh u|^2+\jfo|\nabla\zzh u|^2&=&\jfo\zzh n\zzh u\cdot\nabla\phi\nonumber\\
&\leq&\|\nabla\phi\|_{\lp\infty}\|\zzh u\|_{\lp 6}\|\zzh n\|_{\lp{\frac65}}\nonumber\\
&\leq &C_3\|\nabla\zzh u\|_{\lp 2}\|\zzh n^{1-1/p}\|^{\frac p{p-1}}_{\lp{\frac{6p}{5(p-1)}}}\nonumber\\
&\leq&\frac12\jfo|\nabla\zzh u|^2+\frac{C_3^2}2\|\zzh n^{1-1/p}\|^{\frac {2p}{p-1}}_{\lp{\frac{6p}{5(p-1)}}}\sjj,\label{eq4-lem3.1}
\ees
where Cauchy-Schwarz inequality has been used in the last inequality. Since $p\geq2>\frac{11}7$, we have $\frac1p-\frac13\leq\frac{5(p-1)}{6p}$ and hence $W^{1,p}(\Omega)\hookrightarrow L^{\frac{6p}{5(p-1)}}(\Omega)\hookrightarrow L^{\frac{p}{(p-1)}}(\Omega)$. Now the Gagliardo-Nirenberg inequality together with (\ref{eq-n-mass-conservation}) shows that there exist positive constants $C_4>0$ and $C_5>0$ such that
\bes\label{eq5-lem3.1}
\frac{C_3^2}2\|\zzh n^{1-1/p}\|^{\frac {2p}{p-1}}_{\lp{\frac{6p}{5(p-1)}}}&\leq& C_4\left\{\|\nabla\zzh n^{1-1/p}\|^{\theta\frac{2p}{p-1}}_{\lp p}\|\zzh n^{1-1/p}\|_{\lp{\frac p{p-1}}}^{(1-\theta)\frac{2p}{p-1}}+\|\zzh n^{1-1/p}\|_{\lp{\frac p{p-1}}}^{\frac{2p}{p-1}}\right\}\nonumber\\
&=& C_4\left\{\|\nabla\zzh n^{1-1/p}\|^{\theta\frac{2p}{p-1}}_{\lp p}\|\zzh n\|_{\lp{1}}^{2(1-\theta)}+\|\zzh n\|_{\lp 1}^{2}\right\}\nonumber\\
&\leq&C_5\|\nabla\zzh n^{1-1/p}\|^{\theta\frac{2p}{p-1}}_{\lp p}+C_5\nonumber\\
&=& C_5\|\nabla\zzh n^{1-1/p}\|^{\frac{p}{2(2p-3)}}_{\lp p}+C_5\sjj,
\ees
where $\theta=\frac{p-1}{4(2p-3)}=\frac18+\frac1{8(2p-3)}\in(\frac18,\frac14]$ thanks to $p\geq2$.
Noticing that $p\geq2>\frac74$ implies $\frac{p}{2(2p-3)}<p$, we then use Young's inequality along with (\ref{eq4-lem3.1}) and (\ref{eq5-lem3.1}) to see that there exist positive constant $C_6>0$ such that for all $t>0$
\bes
\frac d{dt}\jfo|\zzh u|^2+\jfo|\nabla\zzh u|^2&\leq&2C_5\|\nabla\zzh n^{1-1/p}\|^{\frac{p}{2(2p-3)}}_{\lp p}+2C_5\nonumber\\
&\leq&\frac1{2(C_2+1)}\left(\frac p{p-1}\right)^p\|\nabla\zzh n^{1-1/p}\|^p_{\lp p}+C_6\label{eq7-lem3.1}
\ees
Combining (\ref{eq7-lem3.1}) with (\ref{kdregasdgvv}), we obtain
\bes
&&\frac d{dt}\left\{\int_\Omega{n_\varepsilon}\ln {n_\varepsilon}+\frac12\int_\Omega\frac{|\nabla\zzh c|^2}{\zzh c}+(C_2+1)\jfo|\zzh u|^2\right\}\nonumber\\
&&+\frac12\left(\frac p{p-1}\right)^p\jfo|\nabla\zzh n^{1-1/p}|^p+\frac1{2C_1}\jfo\frac{|\nabla\zzh c|^4}{\zzh c^3}+\jfo|\nabla\zzh u|^2\nonumber\\
&&\leq2(C_2+1)C_6\sjj.\label{eq8-lem3.1}
\ees

Using the elementary inequality $z\ln z\leq2 z^{\frac65}$ for all $z\geq0$, (\ref{eq5-lem3.1}) and Young's inequality, we have
\bes
\jfo \zzh n\ln\zzh n&\leq& 2\jfo \zzh n^\frac65\nonumber\\
&=&2\|\zzh n^{1-1/p}\|^{\frac {6p}{5(p-1)}}_{\lp{\frac{6p}{5(p-1)}}}\nonumber\\
&\leq&2\|\zzh n^{1-1/p}\|^{\frac {2p}{p-1}}_{\lp{\frac{6p}{5(p-1)}}}+2\nonumber\\
&\leq&\frac{4}{C_3^2}\left(C_5\|\nabla\zzh n^{1-1/p}\|^{\frac{p}{2(2p-3)}}_{\lp p}+C_5\right)+2\nonumber\\
&\leq&\frac{4}{C_3^2}\left(C_5\|\nabla\zzh n^{1-1/p}\|^p_{\lp p}+2C_5\right)+2\nonumber\\
&=&C_7\|\nabla\zzh n^{1-1/p}\|^p_{\lp p}+C_8\sjj,\label{ofdisahnfeasdf}
\ees
where $C_7=\frac{4C_5}{C_3^2}$ and $C_8=\frac{8C_5}{C_3^2}+2$. And using Young's inequality and (\ref{eq-c-maxmum-principle-s0}), we have
\bes
\frac12\int_\Omega\frac{|\nabla\zzh c|^2}{\zzh c}\leq\frac14\jfo\frac{|\nabla\zzh c|^4}{\zzh c^3}+\frac14\jfo\zzh c\leq\frac14\jfo\frac{|\nabla\zzh c|^4}{\zzh c^3}+\frac{s_0}4\Omega.\label{iquyfohads}
\ees
From Poincar$\rm{\acute{e}}$ inequality, we obtain some constant $C_9>0$ such that
\be
\jfo|\zzh u|^2\leq C_9\jfo|\nabla\zzh u|^2\sjj.\label{sfilhuaflad}
\ee
Combining (\ref{ofdisahnfeasdf})-(\ref{sfilhuaflad}), we immediately arrive at
\bess
&&\int_\Omega{n_\varepsilon}\ln {n_\varepsilon}+\frac12\int_\Omega\frac{|\nabla\zzh c|^2}{\zzh c}+\kappa\jfo|\zzh u|^2\\
&\leq& C_7\|\nabla\zzh n^{1-1/p}\|^p_{\lp p}+\frac14\jfo\frac{|\nabla\zzh c|^4}{\zzh c^3}+\kappa C_9\jfo|\nabla\zzh u|^2+C_8+\frac{s_0}4\Omega\sjj.
\eess
This together with (\ref{eq8-lem3.1}) readily establishes (\ref{eq-lem3.1}) upon evident choices of $\kappa$ and $C$.
\end{proof}

The following is the direct consequence of Lemma \ref{lem3.1} and will be used frequently in the sequel.
\begin{lem}\label{lem3.2}
There exist $C>0$ such that for all $\varepsilon\in(0,1)$,
\be
\int^{t+1}_t\jfo|\nabla\zzh n^{1-1/p}|^p\leq C\sjl
\ee
and
\be\label{cforq2}
\int^{t+1}_t\jfo|\nabla\zzh c|^4\leq C\sjl
\ee
as well as
\be
\int^{t+1}_t\jfo|\nabla\zzh u|^2\leq C\sjl.
\ee
\end{lem}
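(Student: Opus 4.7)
The plan is to read off Lemma~\ref{lem3.2} as a standard consequence of the differential inequality \eqref{eq-lem3.1} via an ODE comparison followed by time-integration on unit intervals.

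Write \eqref{eq-lem3.1} schematically as
$$y_\eps'(t)+\tfrac{1}{C}y_\eps(t)+\tfrac{1}{C}z_\eps(t)\le C,\qquad t>0,$$
where
$$y_\eps(t):=\int_\Omega n_\eps\ln n_\eps+\tfrac12\int_\Omega\frac{|\nabla c_\eps|^2}{c_\eps}+\kappa\int_\Omega|u_\eps|^2$$
and
$$z_\eps(t):=\int_\Omega|\nabla n_\eps^{1-1/p}|^p+\int_\Omega\frac{|\nabla c_\eps|^4}{c_\eps^3}+\int_\Omega|\nabla u_\eps|^2.$$
The first step is to drop $z_\eps\ge 0$ and compare the resulting scalar ODE inequality $y_\eps'+\tfrac{1}{C}y_\eps\le C$ with its constant supersolution to obtain $y_\eps(t)\le\max\{y_\eps(0),C^2\}$ for all $t\ge0$. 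For this to be useful I need a uniform-in-$\eps$ bound $y_\eps(0)\le K$, which I will deduce from the initial data assumptions: the $L^\infty(\Omega)$-bound on $n_{0\eps}$ in \eqref{approximate-initialdata-n} controls $\int_\Omega n_{0\eps}\ln n_{0\eps}$; the identity $\tfrac12\int_\Omega|\nabla c_{0\eps}|^2/c_{0\eps}=2\int_\Omega|\nabla\sqrt{c_{0\eps}}|^2$ together with the convergence $\sqrt{c_{0\eps}}\to\sqrt{c_0}$ in $W^{1,2}(\Omega)$ from \eqref{approximate-initialdata-c} controls the second term; and $\|u_{0\eps}\|_{L^2(\Omega)}=\|u_0\|_{L^2(\Omega)}$ from \eqref{approximate-initialdata-u} handles the third. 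Hence $\sup_{t\ge 0}y_\eps(t)\le K'$ uniformly in $\eps\in(0,1)$.

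The second step is to integrate \eqref{eq-lem3.1} over $[t,t+1]$ to get
$$y_\eps(t+1)-y_\eps(t)+\tfrac{1}{C}\int_t^{t+1}z_\eps(s)\,ds\le C.$$
Since $n\ln n\ge -1/e$ pointwise and $\tfrac12\int_\Omega|\nabla c_\eps|^2/c_\eps\ge 0$, $\kappa\int_\Omega|u_\eps|^2\ge 0$, we also have the uniform lower bound $y_\eps(t)\ge-|\Omega|/e$. Combining the uniform upper bound on $y_\eps(t+1)$ with the uniform lower bound on $-y_\eps(t)$ yields
$$\int_t^{t+1}z_\eps(s)\,ds\le C_1$$
for all $t\ge 0$ and all $\eps\in(0,1)$, with $C_1$ independent of $t$ and $\eps$.

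Finally, the three claimed estimates follow by retaining each nonnegative summand of $z_\eps$ individually: the first and third assertions are immediate from $\int_t^{t+1}\int_\Omega|\nabla n_\eps^{1-1/p}|^p\le C_1$ and $\int_t^{t+1}\int_\Omega|\nabla u_\eps|^2\le C_1$, while the $L^4$-bound on $\nabla c_\eps$ in \eqref{cforq2} comes from $\int_t^{t+1}\int_\Omega|\nabla c_\eps|^4=\int_t^{t+1}\int_\Omega c_\eps^3\cdot\frac{|\nabla c_\eps|^4}{c_\eps^3}\le s_0^3\,C_1$ by the uniform $L^\infty$-bound \eqref{eq-c-maxmum-principle-s0}. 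The only mildly delicate point in the argument is verifying the uniform boundedness of $y_\eps(0)$; everything else is a bookkeeping exercise with the Gronwall-type inequality supplied by Lemma~\ref{lem3.1}.
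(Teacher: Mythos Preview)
Your proposal is correct and follows essentially the same approach as the paper's one-line proof, which simply says that the inequalities ``immediately result from an integration of \eqref{eq-lem3.1} because of \eqref{eq-c-maxmum-principle-s0} and the fact that $\int_\Omega n_\eps\ln n_\eps\geq -|\Omega|/e$.'' You have carefully spelled out the two ingredients the paper leaves implicit---the ODE comparison to bound $y_\eps(t)$ from above (together with the uniform-in-$\eps$ control of $y_\eps(0)$ from the approximate initial data), and the multiplication by $c_\eps^3\le s_0^3$ to pass from $\int|\nabla c_\eps|^4/c_\eps^3$ to $\int|\nabla c_\eps|^4$---but the route is the same.
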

\begin{proof}
All inequalities immediately result from an integration of (\ref{eq-lem3.1}) because of (\ref{eq-c-maxmum-principle-s0}) and the fact that $\jfo\zzh n\ln \zzh n\geq-\frac{|\Omega|}e$ for all $t\geq0$.
\end{proof}
\section{Preparing an inductive argument}
With Lemma \ref{lem2.2} and Lemma \ref{lem3.1} at hand, we can improve the integrability of $\zzh n$ step by step.
\begin{lem}\label{lem4.1}
Let $m_*\geq1$, $q\geq2$, $p>\frac{25}{12}$, $m>m_*$ be such that
\be
\frac{2q(p-1)-p}{2q(p-1)}m_*+\frac{p-2}{p-1}<m\leq \frac{2q(p-1)-p}3 m_*+(2q-1)(p-2).\label{m-constant-condition-lem4.1}
\ee
Then for all $K>0$ there exist $C=C(m_*,m,p,q,K)>0$ such that if for some $\varepsilon\in(0,1)$ there hold
\be
\int_\Omega n_\varepsilon^{m_*}(\cdot,t)\leq K,\ \ \ \ {for\ all}\ t\geq0\label{n-m-0-condition-lem4.1}
\ee
and
\be
\int^{t+1}_t\int_\Omega|\nabla{c_\varepsilon}(\cdot,s)|^{2q}ds\leq K,\ \ \ \ {for\ all}\ t\geq0,\label{nabla-c-4-condition-lem4.1}
\ee
then we have
\be
\int_\Omega n_\varepsilon^m(\cdot,t)\leq C,\ \ \ \ {for\ all}\ t\geq0\label{n-m-result-lem4.1}
\ee
and
\be
\int^{t+1}_t\int_\Omega|\nabla n_\varepsilon^{\frac{m-2}p+1}|^p\leq C,\ \ \ \ {for\ all}\ t\geq0.\label{nabla-n-m-2-result-lem4.1}
\ee
\end{lem}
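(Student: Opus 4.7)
My plan is to test the first equation of \eqref{model-approximate} against $\zzh{n}^{m-1}$, absorb the chemotactic contribution by a chain of Young, H\"older and Gagliardo--Nirenberg inequalities whose feasibility is encoded in the two constraints of \eqref{m-constant-condition-lem4.1}, and then deduce the pointwise and spacetime bounds via a standard ODI argument. After multiplying by $\zzh{n}^{m-1}$ and integrating over $\Omega$, the transport term disappears thanks to $\nabla\cdot\zzh{u}=0$ together with $\zzh{u}=0$ on $\partial\Omega$, while the $p$-Laplacian contribution yields
\[
(m-1)\int_\Omega \zzh{n}^{m-2}(|\nabla\zzh{n}|^2+\eps)^{(p-2)/2}|\nabla\zzh{n}|^2 \ \geq\ c_{m,p}\int_\Omega \bigl|\nabla \zzh{n}^{(m+p-2)/p}\bigr|^p,
\]
and the cross-diffusive term reduces, via $F_\eps'\leq 1$ and Cauchy--Schwarz, to being dominated by $(m-1)\int_\Omega \zzh{n}^{m-1}|\nabla\zzh{c}|\,|\nabla\zzh{n}|$.

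The second step is a Young--H\"older decomposition. Writing the integrand as $\bigl(\zzh{n}^{(m-2)/p}|\nabla\zzh{n}|\bigr)\cdot\bigl(\zzh{n}^{(m-1)-(m-2)/p}|\nabla\zzh{c}|\bigr)$ and applying Young's inequality with conjugate exponents $p$ and $p/(p-1)$ yields, for any $\delta>0$,
\[
\int_\Omega \zzh{n}^{m-1}|\nabla\zzh{c}|\,|\nabla\zzh{n}| \ \leq\ \delta\int_\Omega \zzh{n}^{m-2}|\nabla\zzh{n}|^p + C_\delta\int_\Omega \zzh{n}^{m-(p-2)/(p-1)}|\nabla\zzh{c}|^{p/(p-1)},
\]
with the first summand absorbed into the $p$-Laplacian dissipation. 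To the second summand I apply H\"older in space with exponents $\tfrac{2q(p-1)}{2q(p-1)-p}$ and $\tfrac{2q(p-1)}{p}$, producing the factor $\|\nabla\zzh{c}(\cdot,t)\|_{L^{2q}(\Omega)}^{p/(p-1)}$, whose $t$-integrability on unit intervals comes from \eqref{nabla-c-4-condition-lem4.1}, times a power of $\|\zzh{n}\|_{L^r(\Omega)}$ for the explicit exponent $r=2q[m(p-1)-(p-2)]/\bigl[(2q(p-1)-p)(p-1)\bigr]$.

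The third step is Gagliardo--Nirenberg applied to $y_\eps:=\zzh{n}^{(m+p-2)/p}$. Setting $\sigma:=m_*p/(m+p-2)$, hypothesis \eqref{n-m-0-condition-lem4.1} gives $\|y_\eps(\cdot,t)\|_{L^\sigma(\Omega)}\leq K^{1/\sigma}$ uniformly in $t$ and $\eps$, so GN in $\daR^3$ furnishes
\[
\|y_\eps(\cdot,t)\|_{L^s(\Omega)}\ \leq\ C\bigl(\|\nabla y_\eps(\cdot,t)\|_{L^p(\Omega)}^\theta+1\bigr),
\]
with $s$ dictated by the previous H\"older step and $\theta\in[0,1]$ determined by scaling. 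The two inequalities in \eqref{m-constant-condition-lem4.1} correspond, respectively, to the admissibility condition $\theta\in[0,1]$ and to the compatibility requirement that, after a second Young step $\|\nabla y_\eps\|_{L^p}^{\theta\cdot p/(p-1)}\cdot\|\nabla\zzh{c}\|_{L^{2q}}^{p/(p-1)}\leq \delta'\|\nabla y_\eps\|_{L^p}^p+C_{\delta'}\|\nabla\zzh{c}\|_{L^{2q}}^{\gamma}$, the resulting exponent $\gamma$ satisfies $\gamma\leq 2q$, so that \eqref{nabla-c-4-condition-lem4.1} combined with H\"older in time controls the $t$-dependent factor over each $[t,t+1]$.

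Combining these steps produces a differential inequality of the form
\[
\tfrac{d}{dt}\int_\Omega \zzh{n}^m + c_1\int_\Omega|\nabla y_\eps|^p\ \leq\ c_2\bigl(1+h(t)\bigr),
\]
with $h\in L^1_{\rm loc}([0,\infty))$ uniformly bounded on unit time intervals. A final Gagliardo--Nirenberg call (using only \eqref{n-m-0-condition-lem4.1}) dominates $\int_\Omega \zzh{n}^m$ by $\tfrac{c_1}{2c_2}\int_\Omega|\nabla y_\eps|^p+C$, upgrading the estimate to $\tfrac{d}{dt}\int_\Omega\zzh{n}^m+c_3\int_\Omega\zzh{n}^m\leq c_4(1+h(t))$, whence \eqref{n-m-result-lem4.1} follows by a standard comparison argument; integrating the spacetime inequality on $[t,t+1]$ then yields \eqref{nabla-n-m-2-result-lem4.1}. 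I expect the main technical obstacle to be the algebraic verification that the two bounds in \eqref{m-constant-condition-lem4.1} correspond \emph{exactly} to the constraints $\theta\in[0,1]$ and $\gamma\leq 2q$, since four parameters $m,m_*,p,q$ are entangled through these identities, and it is through precisely this accounting that the dimensional restriction (ultimately $p>25/12$ in the hypothesis, later $p>23/11$ in Theorem \ref{thm1.1}) will emerge.
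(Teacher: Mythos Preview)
Your proposal is correct and follows essentially the same route as the paper: test by $\zzh{n}^{m-1}$, use Young's inequality with exponents $p,p'$ to isolate $\int_\Omega \zzh{n}^{\beta}|\nabla\zzh{c}|^{p'}$ with $\beta=m-1+\tfrac{1}{p-1}$, then Gagliardo--Nirenberg interpolating between $W^{1,p}$ and $L^{m_*/\tilde m}$ (applied to $\zzh{n}^{\tilde m}$ with $\tilde m=(m+p-2)/p$), followed by an ODI with $L^1$-in-time forcing. The only organizational difference is that the paper applies a \emph{second Young} inequality to split $\zzh{n}^{\beta}|\nabla\zzh{c}|^{p'}$ additively as $\zzh{n}^{A_1}+|\nabla\zzh{c}|^{2q}$ with $A_1=\tfrac{2q\beta}{2q-p'}$ and then requires $\tfrac{A_1}{\tilde m}\theta_1\leq p$ for absorption, whereas you keep the product via H\"older and impose $\gamma\leq 2q$ after the final Young step; a short computation shows these two constraints are identical, so the right inequality in \eqref{m-constant-condition-lem4.1} arises the same way in both arguments. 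One small correction to your accounting: the left inequality in \eqref{m-constant-condition-lem4.1} is precisely the condition $A_1>m_*$ (equivalently $\theta_1>0$), while $\theta_1<1$ follows automatically from $p\geq 2$, $q\geq 2$ and $m\geq 1$ without invoking \eqref{m-constant-condition-lem4.1}.
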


\begin{proof} Testing the first equation in (\ref{model-approximate}) by $ n_\varepsilon^{m-1}$ and using Young's inequality along with (\ref{eq-c-maxmum-principle-s0}) we can see that there are positive constants $C_1>0$ and $C_2>0$ such that for all $t > 0$,
\begin{eqnarray*}
\frac1m\frac{d}{dt}\int_\Omega n_\varepsilon^m+(m-1)\int_\Omega|\nabla n_\varepsilon|^p n_\varepsilon^{m-2}&\leq&C_1(m-1)\int_\Omega n_\varepsilon^{m-1}|\nabla{c_\varepsilon}|\cdot |\nabla n_\varepsilon|\\
&\leq&(m-1)\int_\Omega\left(\frac12|\nabla n_\varepsilon|^p n_\varepsilon^{m-2}+C_2( n_\varepsilon^{\frac{2-m}p+m-1}|\nabla{c_\varepsilon}|)^{p'}\right)\\
\end{eqnarray*}
where $p'=\frac{p}{p-1}$, so that
\begin{eqnarray}
\frac 1m\frac{d}{dt}\int_\Omega n_\varepsilon^m+
\frac{m-1}2\int_\Omega|\nabla n_\varepsilon|^p n_\varepsilon^{m-2}\leq C_2(m-1)\int_\Omega( n_\varepsilon^{\frac{2-m}p+m-1}|\nabla{c_\varepsilon}|)^{p'},\ \ \ \ {\rm{for\ all}}\ t>0.\label{eq1-lem6.1-proof}
\end{eqnarray}
Let ${\beta}=\left({\frac{2-m}p+m-1}\right)*p'=m+\frac{1}{p-1}-1$
and $\widetilde{m}=\frac{m-2}p+1$. Applying Young's inequality to (\ref{eq1-lem6.1-proof}), we obtain $C_2>0$ such that
\begin{eqnarray}
\frac 1m\frac{d}{dt}\int_\Omega n_\varepsilon^m+\frac{(m-1)}{2\widetilde{m}^p}\int_\Omega|\nabla n_\varepsilon^{\widetilde{m}}|^p&\leq& C_2(m-1)\int_\Omega n_\varepsilon^{{\beta}}|\nabla{c_\varepsilon}|^{p'}\nonumber\\
&\leq& C_2(m-1)\int_\Omega(\zzh n^{\beta\frac{2q}{2q-p'}}+|\nabla\zzh c|^{2q}),\ \ \ \ {\rm{for\ all}}\ t>0.\label{eq4.8}
\end{eqnarray}

Denote $A_1=\frac{2q\beta}{2q-p'}$. If $A_1\leq m_*$, then we can apply H$\rm{\ddot{o}lder}$ inequality together with (\ref{n-m-0-condition-lem4.1}) to (\ref{eq4.8}) to obtain $C_3>0$ fulfilling
\begin{eqnarray}
\frac 1m\frac{d}{dt}\int_\Omega n_\varepsilon^m+\frac{(m-1)}{2\widetilde{m}^p}\int_\Omega|\nabla n_\varepsilon^{\widetilde{m}}|^p
&\leq& C_3(m-1)(1+\int_\Omega|\nabla\zzh c|^{2q}),\ \ \ \ {\rm{for\ all}}\ t>0.\label{eq-key-one}
\end{eqnarray}

If $A_1>m_*$. Due to our assumption $p\geq2$, $q\geq2$ and $m>\frac{p-2}{p-1}$ we have
\be
\frac{\widetilde{m}}{A_1}-\left(\frac1p-\frac13\right)=\frac{m (2 q(p-1)-3)+(p-2) (4 q-3)}{6 q (m (p-1)-p+2)}>0.\nonumber
\ee
Thus $W^{1,p}(\Omega)\hookrightarrow L^{\frac{A_1}{\widetilde{m}}}(\Omega)\hookrightarrow L^{\frac{m_*}{\widetilde{m}}}(\Omega)$, accordingly the Gagliardo-Nirenberg inequality together with (\ref{eq-n-mass-conservation}) provides $C_4>0$ and $C_5>0$ such that
\begin{eqnarray}
\int_\Omega\zzh n^{\beta\frac{2q}{2q-p'}}&=&\left\|\zzh n^{\widetilde{m}}\right\|^{\frac{A_1}{\widetilde{m}}}_{\frac{A_1}{\widetilde{m}}}\nonumber\\
&\leq& C_4\|\nabla\zzh n^{\widetilde{m}}\|_p^{{\frac{A_1}{\widetilde{m}}}\theta_1}\left\|\zzh n^{\widetilde{m}}\right\|^{{\frac{A_1}{\widetilde{m}}}(1-\theta_1)}_{\frac{m_*}{\widetilde{m}}}+C_4\left\|\zzh n^{\widetilde{m}}\right\|^{{\frac{A_1}{\widetilde{m}}}}_{\frac{m_*}{\widetilde{m}}}\nonumber\\
&\leq&C_5(\|\nabla\zzh n^{\widetilde{m}}\|_p^{{\frac{A_1}{\widetilde{m}}}\theta_1}+1),\ \ \ \ {\rm{for\ all}}\ t>0,\label{eq4.10}
\end{eqnarray}
where
\be
\theta_1=\frac{3 (m+p-2) (2 q (m (p-1)-p ({m_*}+1)+{m_*}+2)+p {m_*})}{2 q (m (p-1)-p+2) (3 (m+p-2)+(p-3) {m_*})}\nonumber
\ee
 satisfies $\frac{\widetilde{m}}{A_1}=\theta_1(\frac1p-\frac13)+(1-\theta_1)\frac{\widetilde{m}}{m_*}$. A routine computation shows that $\theta_1\in(0,1)$. Indeed, the left inequality of (\ref{m-constant-condition-lem4.1}) ensures that
 \be
 2 q (m (p-1)-p ({m_*}+1)+{m_*}+2)+p {m_*}>0.\label{eqsssskkl}
 \ee
Since $q\geq2$, the left inequality of (\ref{m-constant-condition-lem4.1}) implies
\be
m>\frac{(2q(p-1)-p){m_*}}{2 q(p-1)}+\frac{p-2}{p-1}=\frac{(p-1-\frac {p}{2q}){m_*}}{(p-1) }+\frac{p-2}{p-1}\geq\frac{(3p-4){m_*}}{4 (p-1)}+\frac{p-2}{p-1},\nonumber
\ee
hence we obtain
\begin{eqnarray}
3 (m+p-2)+(p-3) {m_*}&=&3\left(m- (m_* (1-\frac{p}{3})-p+2)\right)\nonumber\\
&>&3(\frac{m_* (3 p-4)}{4 p-4}+\frac{p-2}{p-1}-(m_* (1-\frac{p}{3})-p+2))\nonumber\\
&=&3\frac{ (m_* (4 p-7)+12 (p-2))p}{12 (p-1)}>0.\label{eqsssskkll}
\end{eqnarray}
And our assumption $m>\frac{p-2}{p-1}$ and $p\geq1$ ensure that $m (p-1)-p+2>0,$ this together with (\ref{eqsssskkl}) and (\ref{eqsssskkll}) shows that $\theta_1>0$. On the other hand, we claim that
\be
\theta_1-1=\frac{p {m_*} (2 q (-(m+2) p+m+4)+3 (m+p-2))}{2 q (m (p-1)-p+2) (3 (m+p-2)+(p-3) {m_*})}<0.\label{kjhfdlsaknda}
\ee
In fact, from $q\geq2$ and $p\geq2>\frac{17}9$ we have
\bess
2 q (-(m+2) p+m+4)+3 (m+p-2)&=&- (2 q(p-1) -3)m-(p-2) (4 q-3)\\
&\leq& - (2 q(p-1) -3)-(p-2) (4 q-3)\\
&=&-2q(3p-5)+3(p-1)\\
&\leq&-4(3p-5)+3(p-1)\\
&=&17-9p<0,
\eess
this together with $m>\frac{p-2}{p-1}$, (\ref{eqsssskkll}) and (\ref{kjhfdlsaknda}) confirm that $\theta_1<1$.

Now, the right inequality of (\ref{m-constant-condition-lem4.1}) ensures that
\begin{eqnarray*}
\frac{A_1}{\widetilde{m}}\theta_1- p&=&\frac{p^2 (3 m-p (2 q-1) ({m_*}+3)+2 (q ({m_*}+6)-3))}{(2 (p-1) q-p) (3 (m+p-2)+(p-3) {m_*})},\\
&=&\frac{3p^2 \left(m-\left(\frac{2 q(p-1) -p}{3} {m_*} +(p-2) (2 q-1)\right)\right)}{(2 (p-1) q-p) (3 (m+p-2)+(p-3) {m_*})},\\
&<&0,
\end{eqnarray*}
this together with (\ref{eq4.10}) and Young's inequality shows that
\bes\label{ldfajflaef}
C_2(m-1)\int_\Omega\zzh n^{\beta\frac{2q}{2q-p'}}&\leq& C_2C_5(m-1)(\|\nabla\zzh n^{\widetilde{m}}\|_p^{{\frac{A_1}{\widetilde{m}}}\theta_1}+1)\nonumber\\
&\leq&\frac{(m-1)}{4\widetilde{m}^p}\|\nabla\zzh n^{\widetilde{m}}\|_p^p+C_6(m-1)\sjj,
\ees
with some $C_6\geq C_2>0$.

Substituting (\ref{ldfajflaef}) into (\ref{eq4.8}), we obtain
\begin{eqnarray}
\frac 1m\frac{d}{dt}\int_\Omega n_\varepsilon^m+\frac{(m-1)}{4\widetilde{m}^p}\int_\Omega|\nabla n_\varepsilon^{\widetilde{m}}|^p
&\leq& C_6(m-1)(1+\int_\Omega|\nabla\zzh c|^{2q}),\ \ \ \ {\rm{for\ all}}\ t>0.\label{eq-key-two}
\end{eqnarray}
We can generate a linear absorption term in (\ref{eq-key-two}). Noticing $\widetilde{m}=\frac{m-2}p+1$, we have the inequality
\be
\frac{\widetilde{m}}{m}-\left(\frac1p-\frac13\right)=\frac{p-2}{m p}+\frac{1}{3}>0,
\ee
so there holds $W^{1,p}(\Omega)\hookrightarrow L^{\frac{m}{\widetilde{m}}}(\Omega)\hookrightarrow L^{\frac{m_*}{\widetilde{m}}}(\Omega)$. In addition, define $\theta_2= \frac{3 (m+p-2) (m-{m_*})}{m(3(m-m_*)+3(p-2)+pm_*)}$ satisfying $\frac{\widetilde{m}}{m}=\theta_2(\frac1p-\frac13)+(1-\theta_2)\frac{\widetilde{m}}{m_*}$ and $\theta_2\in(0,1)$ thanks to $\theta_2-1=-\frac{{m_*} (m+3( p-2))}{m(3(m-m_*)+3(p-2)+pm_*)}<0$. Using the Gagliardo-Nirenberg inequality, (\ref{n-m-0-condition-lem4.1}) and Young's inequality, we obtain $C_7>0$ and $C_8>0$ such that
\bes
\int_\Omega n_\varepsilon^m&=&\|n_\varepsilon^{\widetilde{m}}\|_\frac{m}{\widetilde{m}}^\frac{m}{\widetilde{m}}\nonumber\\
&\leq& C_7\|\nabla\zzh n^{\widetilde{m}}\|_p^{{\frac{m}{\widetilde{m}}}\theta_2}\left\|\zzh n^{\widetilde{m}}\right\|^{{\frac{m}{\widetilde{m}}}(1-\theta_2)}_{\frac{m_*}{\widetilde{m}}}+C_7\left\|\zzh n^{\widetilde{m}}\right\|^{{\frac{m}{\widetilde{m}}}}_{\frac{m_*}{\widetilde{m}}}\nonumber\\
&\leq&C_8(\|\nabla\zzh n^{\widetilde{m}}\|_p^{{\frac{m}{\widetilde{m}}}\theta_2}+1)\nonumber\\
&\leq&C_8(\|\nabla\zzh n^{\widetilde{m}}\|_p^p+2),\ \ \ \ {\rm{for\ all}}\ t>0,\label{eadfasdfadfsabag}
\ees
where we have used ${\frac{m}{\widetilde{m}}}\theta_2-p=-\frac{p (p {m_*}+3(p-2))}{3 (m+p-2-m_*)+p{m_*}}<0$ in the last inequality.

A combination of (\ref{eq-key-two}) and (\ref{eadfasdfadfsabag}) shows that
\be
\frac 1m \frac d{dt}\jfo\zzh n^m+\frac{(m-1)}{8\widetilde{m}^p C_8}\jfo\zzh n^m+\frac{(m-1)}{8\widetilde{m}^p}\int_\Omega|\nabla n_\varepsilon^{\widetilde{m}}|^p\leq C_6(m-1)(1+\int_\Omega|\nabla\zzh c(\cdot,t)|^{2q})+\frac{(m-1)}{4\widetilde{m}^p },\\
\ \label{alsisafghhhehf}
\ee
for all $t>0$. Let $y(t)=\int_\Omega n_\varepsilon^m(\cdot,t)$, $t>0$, and
$h(t)= C_6(m-1)(1+\int_\Omega|\nabla\zzh c(\cdot,t)|^{2q})+\frac{(m-1)}{4\widetilde{m}^p }$, $t>0$. Then (\ref{alsisafghhhehf}) can be rewritten as
\be
\frac 1my'(t)+\frac{(m-1)}{8\widetilde{m}^p C_8}y(t)+\frac{(m-1)}{8\widetilde{m}^p}\int_\Omega|\nabla n_\varepsilon^{\widetilde{m}}|^p\leq h(t),\ \ \ \ {\rm{for\ all}}\ t>0.\label{alsiefujoaehf}
\ee
In view of (\ref{nabla-c-4-condition-lem4.1}), we have
\be
\int^{t+1}_th(s)ds\leq C_9=C_6(m-1)(1+K)+\frac{(m-1)}{4\widetilde{m}^p }\sjj.\label{askjfhkdaslfh}
\ee
Obviously, (\ref{alsisafghhhehf}) implies
\be
\frac 1m \frac d{dt}\jfo\zzh n^m+\frac{(m-1)}{8\widetilde{m}^p C_8}\jfo\zzh n^m\leq C_6(m-1)(1+\int_\Omega|\nabla\zzh c(\cdot,t)|^{2q})+\frac{(m-1)}{4\widetilde{m}^p }\sjj.\\
\ \label{alsjkrfgsdf}
\ee
According to an elementary lemma on decay in linear first-order ODEs with suitably decaying inhomogeneities (see e.g.\cite[Lemma 3.4]{Winkler-SIAM-2014}), we can deduce  from (\ref{askjfhkdaslfh}) and (\ref{alsjkrfgsdf}) that $y(t)<C_{10}$ for some $C_{10}>0$. The boundedness of $y(t)$ together with (\ref{alsiefujoaehf}) and (\ref{askjfhkdaslfh}) shows that
\be
\frac{(m-1)}{8\widetilde{m}^p}\int^{t+1}_t\int_\Omega|\nabla n_\varepsilon^{\widetilde{m}}|^p\leq \frac 1m C_{10}+\int^{t+1}_th(t)\leq C_{11}=\frac{C_{10}}m+C_9,\ \ \ \ {\rm{for\ all}}\ t>0,\nonumber
\ee
so that indeed both (\ref{n-m-result-lem4.1}) and (\ref{nabla-n-m-2-result-lem4.1}) hold with some suitably large $C=C(m_*,m,p,q,K)>0$.
\end{proof}

\begin{rem} The set of $m$ that satisfies (\ref{m-constant-condition-lem4.1}) in Lemma \ref{lem4.1} is not empty. On the one hand, the right-hand side of (\ref{m-constant-condition-lem4.1}) is larger than the left-hand side:
\begin{eqnarray*}
&&\left(\frac{2q(p-1)-p}3 m_*+(2q-1)(p-2)\right)-\left(\frac{2q(p-1)-p}{2q(p-1)}m_*+\frac{p-2}{p-1}\right)\\
&=& \frac{(2 (p-1) q-p) ((2q(p-1)-3)m_*+6q(p-2))}{6 (p-1) q}>0.
\end{eqnarray*}
One the other hand, the right-hand side of (\ref{m-constant-condition-lem4.1}) is bigger than 1:
\begin{eqnarray*}
&&\left(\frac{2q(p-1)-p}3 m_*+(2q-1)(p-2)\right)-1\\
&=&\frac{1}{3} \left(6q(p-2)+3(1-p)+m_*(2q(p-1)-p)\right)\\
&\geq&\frac{1}{3} \left(6q(p-2)+3(1-p)+2q(p-1)-p\right)\\
&=&\frac{1}{3} \left(2q(4p-7)+3-4p\right)\\
&\geq&\frac{1}{3} \left(4(4p-7)+3-4p\right)\\
&=&\frac{1}{3} \left(12p-25\right)>0,
\end{eqnarray*}
since $p>\frac{25}{12}$.
\end{rem}
\vskip 3mm
Based on Lemma \ref{lem4.1}, we can iteratively get the following conclusion.
\begin{lem} \label{lemfhdaskh}
Let $p>\frac{25}{12}$ and
\begin{eqnarray}
  \left\{\begin{array}{lll}
     r=\frac{9(p-2)}{7-3p},&{} \frac{25}{12}<p<\frac73,\\[1mm]
     r\in(1,\infty),&{} p\geq\frac73.
  \end{array}\right.
\end{eqnarray}
Then for all $m\in[1,r)$ there exist $C=C(m)>0$ such that for all $\varepsilon\in(0,1)$,
\be
\int_\Omega n_\varepsilon^m(\cdot,t)\leq C,\ \ \ \ {for\ all}\ t\geq0\label{n-m-result-lem6.2}
\ee
and
\be
\int^{t+1}_t\int_\Omega|\nabla n_\varepsilon^{\frac{m-2}p+1}|^p\leq C,\ \ \ \ {for\ all}\ t\geq0.\label{nabla-n-m-2-result-lem6.2}
\ee
\end{lem}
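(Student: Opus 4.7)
The plan is to apply Lemma \ref{lem4.1} iteratively, with $q=2$ fixed throughout, starting from the bounds that are already available. The mass conservation identity (\ref{eq-n-mass-conservation}) in Lemma \ref{lem2.2} supplies (\ref{n-m-0-condition-lem4.1}) with $m_*=1$, and the space-time estimate (\ref{cforq2}) in Lemma \ref{lem3.2} is precisely (\ref{nabla-c-4-condition-lem4.1}) for $q=2$. Hence the hypotheses of Lemma \ref{lem4.1} are met at the initial level $m_*=1$.

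Setting $q=2$ in (\ref{m-constant-condition-lem4.1}), the admissible window for $m$ takes the form $(L(m_*),U(m_*)]$ with $U(m_*):=\tfrac{3p-4}{3}m_*+3(p-2)$ and $L(m_*):=\tfrac{3p-4}{4(p-1)}m_*+\tfrac{p-2}{p-1}$. Solving the fixed-point equation $U(m)=m$ gives exactly $m=\tfrac{9(p-2)}{7-3p}=r$. The multiplier $\tfrac{3p-4}{3}$ is strictly less than $1$ if and only if $p<\tfrac{7}{3}$; in that regime the iterates $m_0:=1$, $m_{k+1}:=U(m_k)$ form a monotone sequence strictly increasing towards $r$ from below. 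For $p\geq\tfrac{7}{3}$ the multiplier is at least $1$ and the iterates satisfy $m_{k+1}-m_k\geq 3(p-2)$, so $m_k\to\infty$. In either case, for every target $m\in[1,r)$ there exists $N\in\mathbb{N}$ with $m_N>m$.

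Given such an $N$, one selects a strictly increasing sequence $1=\tilde m_0<\tilde m_1<\cdots<\tilde m_N=m$ with each $\tilde m_{k+1}$ lying in $(L(\tilde m_k),U(\tilde m_k)]$. A finite induction then applies Lemma \ref{lem4.1} $N$ times: at stage $k$, the previously derived estimate $\int_\Omega \zzh n^{\tilde m_k}\leq C_k$ together with (\ref{cforq2}) supply the hypotheses of the lemma with $m_*=\tilde m_k$ and $m=\tilde m_{k+1}$, producing both the next bound $\int_\Omega\zzh n^{\tilde m_{k+1}}\leq C_{k+1}$ and the corresponding gradient control. The final application with $k=N-1$ delivers precisely (\ref{n-m-result-lem6.2}) and (\ref{nabla-n-m-2-result-lem6.2}), with a constant depending on $m$ through the finite chain $C_1,\ldots,C_N$.

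The main point to verify is that each admissible window $(L(\tilde m_k),U(\tilde m_k)]$ is non-empty and permits a choice of $\tilde m_{k+1}>\tilde m_k$. Non-emptiness $U(m_*)>L(m_*)$ was already recorded in the Remark following Lemma \ref{lem4.1}. The strict gain $U(m_*)>m_*$ reduces at the base case $m_*=1$ precisely to $p>\tfrac{25}{12}$, which is our standing assumption; at subsequent stages $m_*=\tilde m_k\in[1,r)$ it follows either from the contraction property (when $p<\tfrac{7}{3}$) or from the positivity of both coefficients in $U(m_*)-m_*=\tfrac{3p-7}{3}m_*+3(p-2)$ (when $p\geq\tfrac{7}{3}$). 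These facts, together with the convergence/divergence analysis of the recurrence above, guarantee that the interpolating sequence $(\tilde m_k)$ can always be constructed and that its endpoint reaches any $m<r$.
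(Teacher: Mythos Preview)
Your proof is correct and follows essentially the same route as the paper: fix $q=2$, use mass conservation and Lemma~\ref{lem3.2} as the base case $m_*=1$, and iterate Lemma~\ref{lem4.1} along the recursion $m_{k+1}=(p-\tfrac{4}{3})m_k+3(p-2)$, whose fixed point is $r=\tfrac{9(p-2)}{7-3p}$. The only cosmetic difference is that the paper proves the estimates for the nodes $m_k$ and then invokes an interpolation argument to cover all $m\in[1,r)$, whereas you adjust the final step so that the iteration terminates exactly at the target $m$; for this to be clean you should take $N$ \emph{minimal} with $m\le m_N$, set $\tilde m_k=m_k$ for $k<N$, and use $L(m_*)<m_*$ (which you implicitly have) to place $m$ in $(L(m_{N-1}),U(m_{N-1})]$.
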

\begin{proof} Based on (\ref{eq-n-mass-conservation}) and Lemma \ref{lem3.2}, we can apply Lemma \ref{lem4.1} to $q=2$ and define $(m_k)_{k\in \mathbb{N}_0}\subset \mathbb{R}$ by letting $m_0=1$ and
\be\label{eq-mk}
m_{k+1}=(p-\frac43)m_k+3(p-2),\ \ \ \ {\rm for}\ k\geq0.
\ee
If $p\geq\frac73$, then $m_{k+1}-m_k\geq1$, and hence the sequence $(m_k)_{k\in \mathbb{N}_0}$ is strictly increasing with $m_k\nearrow\infty$ as $k\rightarrow\infty$.

If $p\in(\frac{25}{12},\frac73)$, we can deduce from (\ref{eq-mk}) that
\be
m_k=(p-\frac43)^k\frac{12p-25}{3p-7}+\frac{9(p-2)}{7-3p},\nonumber
\ee
therefore the sequence $(m_k)_{k\in \mathbb{N}_0}$ is strictly increasing with an upper bound $\frac{9(p-2)}{7-3p}$ as $k\rightarrow\infty$ since $p-\frac43\in(0,1)$ and $\frac{12p-25}{3p-7}<0$.
So by means of an interpolation arguement it is clear that we only need to prove (\ref{n-m-result-lem6.2}) and (\ref{nabla-n-m-2-result-lem6.2}) for $m=m_k$ and each $k\in \mathbb{N}_0$. For this purpose, we first point out that the case $k=0$ can be proved by the combination of (\ref{eq-n-mass-conservation}) and Lemma \ref{lem3.2}, so that using mathematical induction, it suffices to prove that if for any $k\in \mathbb{N}_0$ there hold
\be\label{diedai1}
\int_\Omega n_\varepsilon^{m_k}(\cdot,t)\leq C_1(m)\ \ {\rm and}\ \ \int^{t+1}_t\int_\Omega|\nabla n_\varepsilon^{\frac{{m_k}-2}p+1}|^p\leq C_1(m),\ \ \ \ {\rm for\ all}\ t\geq0\ {\rm and}\ {\rm each}\ \varepsilon\in(0,1)
\ee
with some $C_1(m)>0$, then we can find $C_2(m)>0$ satisfying
\be\label{diedai2}
\int_\Omega n_\varepsilon^{m_{k+1}}(\cdot,t)\leq C_2(m)\ \ {\rm and}\ \ \int^{t+1}_t\int_\Omega|\nabla n_\varepsilon^{\frac{{m_{k+1}}-2}p+1}|^p\leq C_2(m),\ \ \ \ {\rm for\ all}\ t\geq0\ {\rm and}\ {\rm each}\ \varepsilon\in(0,1).
\ee
To this end, we note that the requirements (\ref{n-m-0-condition-lem4.1}) and (\ref{nabla-c-4-condition-lem4.1}) from Lemma \ref{lem4.1} are satisfied with $m_*=m_k$ and $q=2$ thanks to the first inequality in (\ref{diedai1}) and (\ref{cforq2}). Applying Lemma \ref{lem4.1} to $m=m_{k+1}$, we arrive at (\ref{diedai2}).
\end{proof}

\section{Improving estimate for $\nabla\zzh c$}\label{sdhalif}
It follows from Lemma \ref{lemfhdaskh} that if $p\geq\frac73$, we can using the Moser iteration method to get the $L^\infty$ estimate of $\zzh n$. But for $p<\frac73$, we couldn't use the Moser iteration, since (\ref{n-m-result-lem6.2}) holds only for $m<\frac{9(p-2)}{7-3p}$. For $p<\frac73$, in order to obtain (\ref{n-m-result-lem6.2}) for all $m\geq1$, we first need to improve the integrability of $\nabla \zzh c$, which enables us to use Lemma \ref{lem4.1}. Noting that in the proof of Lemma \ref{lemfhdaskh}, we use Lemma \ref{lem4.1} only for $q=2$ in (\ref{nabla-c-4-condition-lem4.1}). We first give the improving estimate for $\zzh u$.
\begin{lem}\label{lem6.u}
Let $p\in(\frac{545}{261},\frac73)$. Then there exists $\delta_1(p)>0$ such that for all $m>1$ fulfilling $m>\frac{9 (p-2)}{7-3 p}-\delta_1(p)$ and any $K>0$, if
\be
\jfo \zzh n^m(\cdot,t)\leq K\sjl,\label{ldkfmaldfnlajdfnad}
\ee
then there exist $C(m,p,K)>0$ such that
\be
\jfo |\zzh u(\cdot,t)|^{2(\zs)}\leq C(m,p,K)\sjl.\label{djfkladnvfuaihe}
\ee
\end{lem}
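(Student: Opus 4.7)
The plan is to control $u_\varepsilon$ via the Duhamel representation for the Stokes equation, combined with the standard $L^r$--$L^q$ smoothing estimate for the Stokes semigroup $\{e^{-tA}\}_{t\geq 0}$. Writing
\begin{equation*}
u_\varepsilon(\cdot,t) \;=\; e^{-tA}u_{0\varepsilon} \;+\; \int_0^t e^{-(t-s)A}\mathcal{P}\bigl(n_\varepsilon(\cdot,s)\nabla\phi\bigr)\,ds,
\end{equation*}
and recalling that on a bounded smooth domain with Dirichlet data there exist $c_1,\lambda>0$ such that
\begin{equation*}
\|e^{-\tau A}\mathcal{P}f\|_{L^q(\Omega)} \;\leq\; c_1\bigl(1+\tau^{-\frac{3}{2}(\frac{1}{r}-\frac{1}{q})}\bigr)e^{-\lambda\tau}\|f\|_{L^r(\Omega)},\qquad 1<r\leq q<\infty,
\end{equation*}
I specialize $r=m$ and $q=2(\zs)$. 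The bound $\|n_\varepsilon\nabla\phi\|_{L^m}\leq\|\nabla\phi\|_{L^\infty}K^{1/m}$ follows from (\ref{ldkfmaldfnlajdfnad}), and the initial-data summand is uniformly bounded in $t$ because $u_{0\varepsilon}\in D(A^\alpha)\hookrightarrow L^\infty(\Omega)$ for $\alpha>\tfrac{3}{4}$. Everything therefore reduces to the uniform-in-$t$ finiteness of
\begin{equation*}
\int_0^t\bigl(1+(t-s)^{-\gamma}\bigr)e^{-\lambda(t-s)}\,ds,\qquad \gamma:=\tfrac{3}{2}\Bigl(\tfrac{1}{m}-\tfrac{1}{2(\zs)}\Bigr),
\end{equation*}
which holds precisely when $\gamma<1$.

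The decisive step is the sharp numerical verification of $\gamma<1$. Setting $m^{\star}(p):=\frac{9(p-2)}{7-3p}$, a direct substitution at $m=m^{\star}(p)$ gives
\begin{equation*}
2(\zs)\;=\;\tfrac{32(p-2)}{7-3p}\qquad\text{and}\qquad \gamma\;=\;\tfrac{23(7-3p)}{192(p-2)},
\end{equation*}
so that $\gamma<1$ at $m=m^{\star}(p)$ iff $23(7-3p)<192(p-2)$, i.e.\ iff $p>\tfrac{545}{261}$. A short monotonicity check (using $\tfrac{2(\zs)}{m}=\tfrac{2(p+3)}{3}+\tfrac{2(p-2)}{m}>\sqrt{\tfrac{2(p+3)}{3}}$ for $p\geq 2$) shows $\gamma$ is strictly decreasing in $m$, so the strict inequality $\gamma<1$ persists on an interval $m\in\bigl(m^{\star}(p)-\delta_1(p),\,m^{\star}(p)\bigr)$ for some $\delta_1(p)>0$ and extends automatically to $m\geq m^{\star}(p)$. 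Since $m^{\star}(p)>1$ throughout $p>\tfrac{545}{261}$, after possibly shrinking $\delta_1(p)$ one also keeps the window compatible with $m>1$; the inequality $m\leq 2(\zs)$ needed to apply the semigroup estimate is automatic for $m\geq 1,\,p\geq 2$.

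The main obstacle is purely algebraic: the threshold $\tfrac{545}{261}$ must come out exactly from equating $\gamma=1$ at the extremal integrability $m^{\star}(p)$ produced by Lemma~\ref{lemfhdaskh}, and any slackness in the computation would either shrink the admissible range of $p$ or invalidate the conclusion. Once $\gamma<1$ is secured, the resulting Duhamel estimate reads
\begin{equation*}
\|u_\varepsilon(\cdot,t)\|_{L^{2(\zs)}(\Omega)} \;\leq\; \|e^{-tA}u_{0\varepsilon}\|_{L^{2(\zs)}(\Omega)} + c_1\|\nabla\phi\|_{L^\infty}K^{1/m}\int_0^\infty(1+\sigma^{-\gamma})e^{-\lambda\sigma}\,d\sigma,
\end{equation*}
and raising to the power $2(\zs)$ yields (\ref{djfkladnvfuaihe}) with a constant depending only on $m$, $p$, $K$, $\phi$, and the initial data.
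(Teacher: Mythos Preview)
Your proof is correct and follows essentially the same route as the paper: Duhamel representation for the Stokes equation, the $L^m\to L^q$ smoothing estimate for the Stokes semigroup with $q=2(\zs)$, and the reduction to the integrability condition $\gamma=\tfrac32(\tfrac1m-\tfrac1q)<1$. The only cosmetic difference is in how $\gamma<1$ is verified: the paper rewrites $1-\gamma=\rho(m)/\bigl(4m(m(p+3)+3(p-2))\bigr)$ for the quadratic $\rho(\tau)=4(3+p)\tau^2-(33-6p)\tau-18(p-2)$, checks $\rho(m^\star(p))=\tfrac{9(p-2)(261p-545)}{(7-3p)^2}>0$, and then shows $\rho'>0$ on $[m^\star(p),\infty)$; you instead compute $\gamma(m^\star(p))=\tfrac{23(7-3p)}{192(p-2)}$ directly and check that $\gamma$ is decreasing in $m$. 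Both arguments are equivalent and yield the same threshold $p>\tfrac{545}{261}$ and the same $\delta_1(p)$.
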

\begin{proof} Let $q=2\left(\frac{m p}{3}+m+p-2\right)$ and
\be
\rho(\tau)=4 (3 + p) \tau^2 - (33 - 6 p) \tau - 18 (p-2),\ \ \ \ \tau\in\mathbb{R},\label{fun-rho}
\ee
then we have
\be
\frac32(\frac1m-\frac1q)-1=-\frac{\rho(m)}{4 m (m (p+3)+3 (p-2))}.\label{lasdkfjlashdfl}
\ee
(\ref{fun-rho}) implies $\rho'(\tau)<0$ on $\left(-\infty,\frac{33 - 6 p}{8(p+3)}\right)$ and $\rho'(\tau)>0$ on $\left(\frac{33 - 6 p}{8(p+3)},\infty\right)$. Since $p>\frac{545}{261}$, we have
\be
\rho\left(\frac{9 (p-2)}{7-3 p}\right)=\frac{9 (p-2) (261 p-545)}{(7-3 p)^2}>0.\label{dfakljhfalkdsjfalkjshdka}
\ee
Moreover, since $p>\frac{545}{261}>\frac{1}{36} \left(\sqrt{20953}-71\right)$, we have
\bess
\rho'(\tau)&=&8(3+p)\tau+6p-33\\
&>&8(3+p)\cdot\frac{9 (p-2)}{7-3 p}+6p-33\\
&=&\frac{3 \left(18 p^2+71 p-221\right)}{ 7-3 p}\\
&=&3\frac{ \left(p+\frac{1}{36} \left(\sqrt{20953}+71\right)\right)\left(p-\frac{1}{36} \left(\sqrt{20953}-71\right)\right)}{ 7-3 p}>0,\ \ \ {\rm for\ all}\  \tau>\frac{9 (p-2)}{7-3 p}.
\eess
This together with (\ref{lasdkfjlashdfl}) and (\ref{dfakljhfalkdsjfalkjshdka}) enables us to pick $\delta_1=\delta_1(p)$ such that
\be
\frac32(\frac1m-\frac1q)-1=-\frac{\rho(m)}{4 m (m (p+3)+3 (p-2))}<0,\ \ \ \ {\rm for\ all}\ m>\frac{9 (p-2)}{7-3 p}-\delta_1(p).\label{eq-m-range-lem6.u}
\ee

Suppose (\ref{ldkfmaldfnlajdfnad}) is established for some $\varepsilon\in(0,1)$ and $K>0$. An application of the variation-of-constants representation of $\zzh u$ enables us to obtain
\be
\|\zzh u\zbl\|_\lp q\leq\|\banqun t u_0\|_\lp q+\int^t_0\|\banqun{(t-s)}\mathcal{P}[\zzh n\zbls\nabla\phi]\|_\lp q ds,\ \ \ \ t>0.
\ee
Using the regularization properties of the Dirichlet Stokes semigroup $(e^{tA})_{t\geq0}$ (\cite[p.201]{Giga-JDE-1986}), we can find $C_1>0$, $C_2>0$, $C_3>0$ and $\lambda>0$ such that
\be
\|\banqun t u_0\|_\lp q\leq C_1\|u_0\|_{\lp q}\sjj\label{dkfjhakjndfakusdfods}
\ee
and
\bes\label{slfknjdkasjnfiauhdf}
&&\int^t_0\|\banqun{(t-s)}\mathcal{P}[\zzh n\zbls\nabla\phi]\|_\lp q ds\nonumber\\
&\leq& C_2\int^t_0\left(1+(t-s)^{-\frac32(\frac1m-\frac1q)}\right)e^{-\lambda(t-s)}\|\mathcal{P}[\zzh n\zbls\nabla\phi]\|_\lp m ds\sjj.
\ees
From (\ref{ldkfmaldfnlajdfnad}), the boundedness of $\nabla\phi$ on $\Omega$ and the continuity of the Helmholtz projection acting as an operator in $L^p(\Omega;\daR^3)$ (\cite{Fujiwara-1997}), we see that there exist some $C_3>0$ such that
\be
\|\mathcal{P}[\zzh n\zbls\nabla\phi]\|_\lp m\leq C_3\|\zzh n\zbls\|_\lp m \leq C_3K^\frac1m,\ \ \ \ {\rm for\ all}\ s>0.\nonumber\\
\ee
This together with (\ref{slfknjdkasjnfiauhdf}) shows that

\be
\int^t_0\|\banqun{(t-s)}\mathcal{P}[\zzh n\zbls\nabla\phi]\|_\lp q ds\leq C_2C_3C_4K^{\frac1m}\sjj\label{ksudkjafiauhodsandka}
\ee
with $C_4=\int^\infty_0\left(1+\sigma^{-\frac32(\frac1m-\frac1q)}\right)e^{-\lambda\sigma}d\sigma$ being finite thanks to (\ref{eq-m-range-lem6.u}).
\end{proof}
Using the pair of results (\ref{n-m-result-lem4.1}) and (\ref{nabla-n-m-2-result-lem4.1}) in Lemma \ref{lem4.1}, we can obtain the following space-time integrability of $\zzh n$.
\begin{lem}\label{lem6.3}
Let $p\geq2$. Then for all $m\geq1$ and any $K>0$, if
\be
\int_\Omega n_\varepsilon^m(\cdot,t)\leq K,\ \ \ \ {for\ all}\ t\geq0\label{adsfvadfagDSADA}
\ee
and
\be
\int^{t+1}_t\int_\Omega\left|\nabla n_\varepsilon^{\frac{m-2}p+1}\right|^p\leq K,\ \ \ \ {for\ all}\ t\geq0,\label{kldsafjlndaiksjdlkaj}
\ee
then there exist $C(m,p,K)>0$ such that
\be
\int^{t+1}_t\int_\Omega n_\varepsilon^{\frac{m p}{3}+m+p-2}\leq C(m,p,K),\ \ \ \ {for\ all}\ t\geq0.\label{n-result-lem6.3}
\ee
\end{lem}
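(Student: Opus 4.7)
The plan is to reduce the desired bound to a single Gagliardo--Nirenberg interpolation applied to $v_\eps := \zzh n^{\widetilde m}$, where $\widetilde m := \frac{m-2}{p}+1 = \frac{m+p-2}{p}$. Indeed, hypothesis (\ref{kldsafjlndaiksjdlkaj}) asserts precisely that $\int_t^{t+1}\|\nabla v_\eps\zbls\|_{\lp p}^p\,ds$ is bounded uniformly in $t$ and $\eps$, while hypothesis (\ref{adsfvadfagDSADA}) may be rewritten as $\|v_\eps\zbl\|_{\lp s}^s \leq K$ for $s := m/\widetilde m = \frac{mp}{m+p-2}$. Moreover, the target exponent admits the clean factorisation $\frac{mp}{3}+m+p-2 = \alpha\widetilde m$ with $\alpha := \frac{p(s+3)}{3}$, so that (\ref{n-result-lem6.3}) is equivalent to a bound for $\int_t^{t+1}\|v_\eps\zbls\|_{\lp\alpha}^\alpha\,ds$. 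The key observation is that this value of $\alpha$ is exactly the one for which the Gagliardo--Nirenberg scaling identity $\frac1\alpha = \theta(\frac1p-\frac13)+(1-\theta)\frac1s$ forces $\alpha\theta = p$.

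Concretely, I would first verify the elementary bounds $\alpha\geq p$ (equivalent to $mp\geq 0$), $s\leq\alpha$ (equivalent to $p(m+3)\geq 6$, which is immediate from $m\geq 1,\ p\geq 2$), and $\theta=p/\alpha\in[0,1]$. With these in hand, the three-dimensional Gagliardo--Nirenberg inequality applied to $v_\eps$ on $\Omega\subset\daR^3$ gives
\[
\|v_\eps\zbl\|_{\lp \alpha} \leq C\Bigl(\|\nabla v_\eps\zbl\|_{\lp p}^{\theta}\|v_\eps\zbl\|_{\lp s}^{1-\theta}+\|v_\eps\zbl\|_{\lp s}\Bigr),
\]
and raising this to the $\alpha$-th power, while invoking $\alpha\theta=p$ and $\alpha(1-\theta)=\alpha-p$, yields the pointwise-in-$t$ estimate
\[
\jfo v_\eps^\alpha\zbl \leq C\Bigl(\|\nabla v_\eps\zbl\|_{\lp p}^{p}\|v_\eps\zbl\|_{\lp s}^{\alpha-p}+\|v_\eps\zbl\|_{\lp s}^{\alpha}\Bigr).
\]

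Finally, I would integrate over $(t,t+1)$ and pull the time-uniform bound on $\|v_\eps\zbl\|_{\lp s} = \|\zzh n\zbl\|_{\lp m}^{m/s}$ out of the time integral. The first summand is then absorbed into $\int_t^{t+1}\|\nabla v_\eps\|_{\lp p}^p\,ds\leq K$ from (\ref{kldsafjlndaiksjdlkaj}), while the second contributes only a uniform constant. Rewriting $\int_\Omega v_\eps^\alpha = \int_\Omega \zzh n^{\alpha\widetilde m} = \int_\Omega \zzh n^{mp/3+m+p-2}$ delivers (\ref{n-result-lem6.3}). There is no genuine obstacle here; the whole argument is mechanical once the distinguished exponent $\alpha = p(s+3)/3$ has been recognised, and the only conceptual point worth emphasising is the algebraic coincidence $\alpha\theta=p$, which is precisely what renders the time integral of $\|v_\eps\|_{\lp\alpha}^\alpha$ \emph{linear} in the quantity controlled by hypothesis (\ref{kldsafjlndaiksjdlkaj}).
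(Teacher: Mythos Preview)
Your proposal is correct and follows essentially the same route as the paper: both apply the Gagliardo--Nirenberg inequality to $\zzh n^{\widetilde m}$ with the low-index norm $L^{m/\widetilde m}$, identify the crucial relation $\alpha\theta=p$ (which the paper records as $\tfrac{\alpha}{\widetilde m}\theta=p$ in its notation), and then integrate in time. Your substitution $s=m/\widetilde m$ and $\alpha=p(s+3)/3$ is just a repackaging of the paper's computation; the verification that $s\le\alpha$ and $\theta\in(0,1]$ matches the paper's check that $W^{1,p}(\Omega)\hookrightarrow L^{\alpha/\widetilde m}(\Omega)\hookrightarrow L^{m/\widetilde m}(\Omega)$.
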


\begin{proof} Let $\alpha=\frac{m p}{3}+m+p-2$ and $\widetilde{m}=\frac{m-2}p+1$, then
\bess
\frac{\widetilde{m}}\alpha-\left(\frac1p-\frac13\right)=\frac{(m+3) p-6}{3 (m (p+3)+3 (p-2))}>0,
\eess
and hence $W^{1,p}(\Omega)\hookrightarrow L^{\frac\alpha{\widetilde{m}}}(\Omega)\hookrightarrow L^{\frac m{\widetilde{m}}}(\Omega)$. Define
\bess
\theta=\frac{3 (m+p-2)}{m (p+3)+3 (p-2)}\in(0,1),
\eess
then $\theta$ satisfies
\bess
{\frac{\widetilde{m}}\alpha}=\theta(\frac1p-\frac13)+(1-\theta)\frac{\widetilde{m}}m\ \ \ {\rm and}\ \ \ \frac\alpha{\widetilde{m}}\theta=p.
\eess
Using (\ref{adsfvadfagDSADA}), (\ref{kldsafjlndaiksjdlkaj}) and the Gagliardo-Nirenberg inequality, we obtain $C_1>0$ such that
\bess
\jfo \zzh n^\alpha=\left\|\zzh n^{\widetilde{m}}\right\|^{\frac\alpha{\widetilde{m}}}_{\frac\alpha{\widetilde{m}}}&\leq& C_1\left\|\nabla \zzh n^{\widetilde{m}}\right\|^{\frac\alpha{\widetilde{m}}\theta}_p\left\|\zzh n^{\widetilde{m}}\right\|^{\frac\alpha{\widetilde{m}}(1-\theta)}_{\frac m{\widetilde{m}}}+C\left\|\zzh n^{\widetilde{m}}\right\|^{\frac\alpha{\widetilde{m}}}_{\frac m{\widetilde{m}}}\\
&\leq& C_1K^{\frac{\alpha(1-\theta)}m}\left\|\nabla \zzh n^{\widetilde{m}}\right\|^p_p+C_1K^{\frac\alpha m}\\
&\leq& C_1K^{\frac{p}{3}}\left\|\nabla \zzh n^{\widetilde{m}}\right\|^p_p+C_1K^{1+\frac p3+\frac{p-2}m}\sjj,
\eess
which further implies
\bess
\jft \zzh n^\alpha&\leq& C_1K^{\frac{p}{3}}\int^{t+1}_t\int_\Omega\left|\nabla n_\varepsilon^{\frac{m-2}p+1}\right|^p+C_1K^{1+\frac p3+\frac{p-2}m}\\
&\leq& C_1K^{1+{\frac{p}{3}}}+C_1K^{1+\frac p3+\frac{p-2}m},\ \ \ \ {\rm for\ all}\ t\geq0.
\eess
\end{proof}
With Lemma \ref{lem6.u} and Lemma \ref{lem6.3} at hand, we can improve the estimate for $\nabla\zzh c$.
\begin{lem}\label{lem6.c}
Let $p\in(\frac{545}{261},\frac73)$ and let $\delta_1(p)>0$ be as in Lemma \ref{lem6.u}. Then for all $m>\frac{9 (p-2)}{7-3 p}-\delta_1(p)$ and any $K>0$, if for any $\eps\in(0,1)$ there hold
\be
\jfo \zzh n^m(\cdot,t)\leq K\sjl,
\ee
and
\be
\int^{t+1}_t\int_\Omega|\nabla\zzh n^{\frac{m-2}p+1}|^p\leq K\sjl,
\ee
then one can find $C(m,p,K)>0$ with the property that
\be
\int^{t+1}_t\int_\Omega|\nabla\zzh c|^{2(\zs)}\leq C(m,p,K)\sjl.
\ee
\end{lem}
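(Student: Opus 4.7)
Let $q:=\zs$; the claim amounts to a uniform-in-$t$ bound on $\int_t^{t+1}\int_\Omega|\nabla c_\eps|^{2q}$. I would derive an ODI for $y(t):=\int_\Omega|\nabla c_\eps(\cdot,t)|^{2q}$ via the standard $L^{2q}$-type energy estimate on $\nabla c_\eps$: take the gradient of the $c_\eps$-equation in (\ref{model-approximate}), pair with $|\nabla c_\eps|^{2q-2}\nabla c_\eps$, integrate over $\Omega$, and use the identity $\nabla c\cdot\nabla\Delta c=\tfrac12\Delta|\nabla c|^2-|D^2c|^2$. Two structural simplifications come for free: the boundary integral is nonpositive by the convexity of $\Omega$ (cf.\ \cite{Lions-1980-ARMA}), and the $(u_\eps\cdot D^2c_\eps)$-piece of the convective contribution equals $\tfrac{1}{2q}\int_\Omega u_\eps\cdot\nabla|\nabla c_\eps|^{2q}=0$ thanks to $\nabla\cdot u_\eps=0$ and $u_\eps|_{\partial\Omega}=0$. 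What remains is an inequality
\[
\tfrac{1}{2q}y'(t)+J(t)^2\leq -\!\int_\Omega|\nabla c_\eps|^{2q-2}\nabla c_\eps\cdot\nabla\bigl(F_\eps(n_\eps)c_\eps\bigr)-\!\int_\Omega|\nabla c_\eps|^{2q-2}\nabla c_\eps\cdot(\nabla u_\eps\cdot\nabla c_\eps),
\]
with $J(t)^2:=\int_\Omega|\nabla c_\eps|^{2q-2}|D^2c_\eps|^2$ as the available dissipation.

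The reaction integral, after one integration by parts, is controlled using $F_\eps(n_\eps)\leq n_\eps$, $c_\eps\leq s_0$ (Lemma \ref{lem2.2}) and Young's inequality: the top-order part is absorbed into $J^2$, and the remainder reduces to a multiple of $\int_\Omega n_\eps^2|\nabla c_\eps|^{2q-2}$, which a H\"older split turns into a factor of $\|n_\eps\|_{L^{2q}}^2$ times $y^{(2q-2)/(2q)}$; interpolating $\|n_\eps\|_{L^{2q}}$ between $\|n_\eps\|_{L^m}\leq K^{1/m}$ (hypothesis) and $\|n_\eps\|_{L^q}$ (bounded in space-time by Lemma \ref{lem6.3}) produces a time-integrable forcing. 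For the $\nabla u_\eps$-integral I would integrate by parts once to shift the derivative off $u_\eps$ (no boundary term, as $u_\eps|_{\partial\Omega}=0$), producing an integrand pointwise bounded by $C|u_\eps||D^2c_\eps||\nabla c_\eps|^{2q-1}$; H\"older with exponents $(2q,2,\tfrac{2q}{q-1})$, the Gagliardo--Nirenberg inequality applied to $|\nabla c_\eps|^q\in W^{1,2}(\Omega)$ in three dimensions, and the uniform $L^{2q}$-bound on $u_\eps$ from Lemma \ref{lem6.u} then yield a bound of the form $\tfrac14J^2+Cy(t)$. A further Gagliardo--Nirenberg interpolation of $y$ against $J^2$ (using the uniform $L^2$-bound on $\nabla c_\eps$ provided by Lemma \ref{lem3.1} together with $c_\eps\leq s_0$) delivers $y\leq \eta J^2+C_\eta$, so that the ODI closes in the form $y'(t)+\kappa y(t)\leq h(t)$ with $h$ built from $\|n_\eps(\cdot,t)\|_{L^q}$ and $\|\nabla u_\eps(\cdot,t)\|_{L^2}$ to suitable positive powers.

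Both $\|n_\eps(\cdot,t)\|_{L^q}^{\alpha}$ and $\|\nabla u_\eps(\cdot,t)\|_{L^2}^{2}$ have uniformly bounded integrals over unit time-intervals (by Lemmas \ref{lem6.3} and \ref{lem3.2}), so the linear ODE-comparison argument from \cite[Lemma 3.4]{Winkler-SIAM-2014}---already used in Lemma \ref{lem4.1}---yields a uniform-in-$t$ bound on $y(t)$; time-integrating this bound then gives the claim. The main technical obstacle I foresee is the careful bookkeeping of Gagliardo--Nirenberg exponents so that both the $\nabla u_\eps$-contribution and the subsequent $y$-absorption remain in the admissible regime in three dimensions. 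The threshold $m>\frac{9(p-2)}{7-3p}-\delta_1(p)$ is precisely tuned, via its role in Lemma \ref{lem6.u}, to make $2q=2(\zs)$ the correct Lebesgue exponent for $u_\eps$, so this delicate matching of powers is what prevents the interpolations from failing.
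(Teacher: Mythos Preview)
Your approach is exactly the one the paper invokes (it simply defers to \cite[Lemma~6.3]{Winkler-JDE-2018} with $q=\zs$): test with $|\nabla c_\eps|^{2q-2}\nabla c_\eps$, use convexity of $\Omega$ for the boundary term, kill part of the transport via $\nabla\cdot u_\eps=0$, and close an ODI for $y(t)=\int_\Omega|\nabla c_\eps|^{2q}$ against the dissipation $J^2=\int_\Omega|\nabla c_\eps|^{2q-2}|D^2c_\eps|^2$ using the $L^{2q}$-bound on $u_\eps$ from Lemma~\ref{lem6.u} and the space--time $L^q$-bound on $n_\eps$ from Lemma~\ref{lem6.3}.

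One step is wrong as written. You claim $\|n_\eps\|_{L^{2q}}$ can be interpolated between $\|n_\eps\|_{L^m}$ and $\|n_\eps\|_{L^q}$, but $2q>q\geq m$, so $L^{2q}$ does not sit between these spaces; after your subsequent Young step this would leave you needing $n_\eps\in L^{2q}$ in space--time, which is strictly more than Lemma~\ref{lem6.3} delivers (indeed, $q=\zs$ is precisely the top exponent that the Gagliardo--Nirenberg argument there reaches). The remedy is a different H\"older split on $\int_\Omega n_\eps^2|\nabla c_\eps|^{2q-2}$: put $n_\eps^2$ into $L^{q/2}$ so that $\|n_\eps(\cdot,t)\|_{L^q}^2$ enters directly as the time-integrable forcing, and place $|\nabla c_\eps|^{2q-2}$ in $L^{q/(q-2)}$; then Gagliardo--Nirenberg on $|\nabla c_\eps|^q$ (between $\|\nabla|\nabla c_\eps|^q\|_{L^2}\sim J$ and a low-order norm controlled by Lemma~\ref{lem3.1}) absorbs the rest into $\eta J^2+Cy+h(t)$ with $\int_t^{t+1}h\leq C$. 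A minor point on the convective piece: in your H\"older triple you should group $|\nabla c_\eps|^{q-1}|D^2c_\eps|$ together in $L^2$ (this is what produces the factor $J$), not place $|D^2c_\eps|$ in $L^2$ by itself, which is not available.
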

\begin{proof}
Similar as the proof of Lemma 6.3 in \cite{Winkler-JDE-2018}( we only need to set $q=\zs$ here), then we can get the desired result.
\end{proof}
\section{$L^\infty$ bounds for $\zzh n$ when $p>\frac{23}{11}$}
According to Lemma \ref{lem6.c}, we can improve the estimate of Lemma \ref{lem4.1}.
\begin{lem}\label{lem7.sec-ire}
Let $p\in(\frac{545}{261},\frac73)$, $m_*>\frac{9 (p-2)}{7-3 p}-\delta_1(p)$ with $\delta_1(p)>0$ taken from Lemma \ref{lem6.u}. 
Then for all $m>1$ fulfilling
\be
m\leq \frac{1}{9} \left\{2(p+3)(p-1)m_*^2+3(4p^2-5p-8)m_*+9(2p-5)(p-2)\right\},\label{m-constant-condition-lem6.1}
\ee
and any $K>0$, if for any $\eps\in(0,1)$ there hold
\be
\int_\Omega n_\varepsilon^{m_*}(\cdot,t)\leq K,\ \ \ \ {for\ all}\ t\geq0\label{n-m-0-condition-lem7.1}
\ee
and
\be
\int^{t+1}_t\int_\Omega|\nabla n_\varepsilon^{\frac{m_*-2}p+1}|^p\leq K,\ \ \ \ {for\ all}\ t\geq0,\label{nabla-c-4-condition-lem7.1}
\ee
then one can pick $C(m_*,m,p,K)>0$ such that
\be
\int_\Omega n_\varepsilon^m(\cdot,t)\leq C(m_*,m,p,K),\ \ \ \ {for\ all}\ t\geq0\label{n-m-result-lem7.1}
\ee
and
\be
\int^{t+1}_t\int_\Omega|\nabla n_\varepsilon^{\frac{m-2}p+1}|^p\leq C(m_*,m,p,K),\ \ \ \ {for\ all}\ t\geq0.\label{nabla-n-m-2-result-lem7.1}
\ee
\end{lem}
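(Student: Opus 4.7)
The plan is a direct reduction to Lemma \ref{lem4.1}, now applied with a larger choice of $q$ than the value $q=2$ that was used in the proof of Lemma \ref{lemfhdaskh}. The enlargement of $q$ is precisely what Lemma \ref{lem6.c} provides under the present hypotheses. Since (\ref{n-m-0-condition-lem7.1})--(\ref{nabla-c-4-condition-lem7.1}) match the requirements of Lemma \ref{lem6.c}, I would first invoke it to conclude
\begin{equation*}
\int_t^{t+1}\!\int_\Omega |\nabla c_\eps|^{2q_*}\leq K_1,\qquad q_*:=\frac{m_* p}{3}+m_*+p-2,
\end{equation*}
for some $K_1=K_1(m_*,p,K)>0$ and all $t\geq 0$ and $\eps\in(0,1)$. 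Together with (\ref{n-m-0-condition-lem7.1}), this is exactly the pair of hypotheses (\ref{n-m-0-condition-lem4.1})--(\ref{nabla-c-4-condition-lem4.1}) of Lemma \ref{lem4.1} for the choice $q=q_*$.

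It then remains only to verify that $(m_*,m,p,q_*)$ satisfies the structural relation (\ref{m-constant-condition-lem4.1}). This is the algebraic heart of the statement: substituting $q_*=\frac{m_*(p+3)}{3}+(p-2)$ into the right-hand side of (\ref{m-constant-condition-lem4.1}) and expanding, one finds
\begin{equation*}
\frac{2q_*(p-1)-p}{3}\,m_*+(2q_*-1)(p-2) \;=\; \frac{1}{9}\bigl\{2(p-1)(p+3)m_*^2+3(4p^2-5p-8)m_*+9(2p-5)(p-2)\bigr\},
\end{equation*}
which matches the upper bound displayed in (\ref{m-constant-condition-lem6.1}). Thus the right inequality in (\ref{m-constant-condition-lem4.1}) reduces exactly to the hypothesis of the present lemma. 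The left inequality of (\ref{m-constant-condition-lem4.1}), being the considerably weaker restriction $m>\frac{2q_*(p-1)-p}{2q_*(p-1)}m_*+\frac{p-2}{p-1}$, will hold automatically for $m>1$ once one notes that $\frac{p-2}{p-1}<1$ and $\frac{2q_*(p-1)-p}{2q_*(p-1)}<1$, together with the smallness constraint that in practice $m$ will be chosen at least as large as $m_*$.

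The remaining bookkeeping concerns the ambient hypotheses $m_*\geq 1$, $q_*\geq 2$, and $p>\tfrac{25}{12}$ that Lemma \ref{lem4.1} imposes. Because $p>\tfrac{545}{261}>\tfrac{25}{12}$ by assumption and $m_*>\tfrac{9(p-2)}{7-3p}-\delta_1(p)$, a routine numerical check (shrinking $\delta_1(p)$ further if necessary, which is permitted since Lemma \ref{lem6.u} only fixes $\delta_1(p)$ by an open upper bound) ensures both $m_*\geq 1$ and $q_*\geq 2$. With all structural conditions in force, Lemma \ref{lem4.1} directly delivers (\ref{n-m-result-lem7.1}) and (\ref{nabla-n-m-2-result-lem7.1}). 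The only genuine obstacle I anticipate is the polynomial identification of the upper bound above, since any misalignment would render the stated range in (\ref{m-constant-condition-lem6.1}) inconsistent with what the iteration actually supplies; everything else is either a direct invocation or an elementary comparison.
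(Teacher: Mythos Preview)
Your proposal is correct and follows essentially the same route as the paper: invoke Lemma \ref{lem6.c} with $q_*=\frac{m_*p}{3}+m_*+p-2$ to upgrade the $\nabla c_\eps$ integrability, verify by direct substitution that the right-hand side of (\ref{m-constant-condition-lem4.1}) with $q=q_*$ collapses to the expression in (\ref{m-constant-condition-lem6.1}), and then apply Lemma \ref{lem4.1}. The paper's own proof is in fact more terse than yours---it does not separately discuss the left inequality of (\ref{m-constant-condition-lem4.1}), the condition $m>m_*$, or the auxiliary checks $m_*\geq 1$ and $q_*\geq 2$; your remarks on these points (including the observation that in the actual iteration $m=\psi(m_*)>m_*$) are reasonable supplementary bookkeeping rather than a different argument.
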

\begin{proof}
Since $m_*>\frac{9 (p-2)}{7-3 p}-\delta_1(p)$, we can apply Lemma \ref{lem6.c} to $q=\frac{m_*p}3+m_*+p-2$ to see that for some $C_1(m,p,K)>0$ there holds
\be
\int^{t+1}_t\int_\Omega|\nabla\zzh c|^{2q}\leq C_1(m,p,K)\sj.
\ee
Substituting $q=\frac{m_*p}3+m_*+p-2$ into the right-hand side of (\ref{m-constant-condition-lem4.1}), we obtain
\bess
&&\frac{2q(p-1)-p}3 m_*+(2q-1)(p-2)\\
&=&\frac{1}{9} \left\{2(p+3)(p-1)m_*^2+3(4p^2-5p-8)m_*+9(2p-5)(p-2)\right\}.
\eess
Then for any $m>1$ satisfies (\ref{m-constant-condition-lem6.1}), an application of Lemma \ref{lem4.1} shows that both inequalities in (\ref{n-m-result-lem7.1}) and (\ref{nabla-n-m-2-result-lem7.1}) hold for some suitably large $C(m_*,m,p,K)>0$ due to (\ref{m-constant-condition-lem6.1}) and (\ref{n-m-0-condition-lem7.1}).
\end{proof}
In order to find out through its condition (\ref{m-constant-condition-lem6.1}), how far Lemma \ref{lem7.sec-ire} can improve the regularity of $\zzh n$ and $\nabla \zzh n$, let us first demonstrate the following observation, which highlight the role of restriction $p>\frac{23}{11}$ made in Theorem \ref{thm1.1}. 
\begin{lem}\label{lem7.2}
For $p\in(2,\frac73)$, let
\be
\psi(m)=\frac{1}{9} \left\{2(p+3)(p-1)m^2+3(4p^2-5p-8)m+9(2p-5)(p-2)\right\},\ \ \ \ p\in\mathbb{R}.\label{sdfakhfdkla}
\ee
Then
\be
\psi(\frac{9 (p-2)}{7-3 p})>\frac{9 (p-2)}{7-3 p}\ \ \ if\ and\ only\ if\ \ p>\frac{23}{11},\label{eq1-lem7.2}
\ee
and there exist $\delta_2(p)>0$ and $\Gamma>1$ such that
\be
\psi(m)>m\Gamma,\ \ \ for\ all\ \ m>\frac{9 (p-2)}{7-3 p}-\delta_2(p).\label{eq2-lem7.2}
\ee
\end{lem}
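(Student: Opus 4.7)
My plan is to treat both assertions by elementary single-variable analysis, exploiting that everything eventually reduces to polynomial identities in the single real parameter $p$.

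For (\ref{eq1-lem7.2}), I would substitute $\mu := 9(p-2)/(7-3p)$ directly into $\psi(\mu)-\mu$ and then multiply through by $9(7-3p)^2$ to clear all denominators, which turns the assertion into a polynomial identity in $p$. The critical cancellation I expect is that, after expanding the three resulting contributions
\[
18(p+3)(p-1)(p-2), \qquad 3(4p^2-5p-11)(7-3p), \qquad (2p-5)(7-3p)^2,
\]
the $p^{3}$ and $p^{2}$ coefficients cancel and only a linear remainder survives. Concretely, the expected clean identity is
\[
\psi(\mu) - \mu \;=\; \frac{16(p-2)(11p-23)}{(7-3p)^{2}},
\]
from which the equivalence with $p > 23/11$ is immediate since $(p-2)>0$ and $(7-3p)^{2}>0$ throughout $p\in(2,7/3)$.

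For (\ref{eq2-lem7.2}), I would view $h_{\Gamma}(m):=\psi(m)-\Gamma m$ as a one-parameter family of quadratics in $m$. On $p\in(23/11,7/3)\subset(2,5/2)$ the leading coefficient $\tfrac{2(p+3)(p-1)}{9}$ is strictly positive while the constant term $(2p-5)(p-2)$ is strictly negative, so for every $\Gamma\in\mathbb{R}$ the quadratic $h_{\Gamma}$ has exactly one positive root $m_{+}(\Gamma)$ and one negative root, and $\Gamma\mapsto m_{+}(\Gamma)$ is continuous by the quadratic formula. Part (i) together with $p>23/11$ yields $h_{1}(\mu)>0$, whence $m_{+}(1)<\mu$; by continuity I can then pick $\Gamma>1$ close enough to $1$ that $m_{+}(\Gamma)<\mu$ still holds, and set $\delta_{2}(p):=\mu-m_{+}(\Gamma)>0$. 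For any $m>\mu-\delta_{2}(p)=m_{+}(\Gamma)$ one then automatically has $h_{\Gamma}(m)>0$, which is precisely $\psi(m)>\Gamma m$.

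The only step that genuinely deserves attention is the algebraic collapse behind the identity in part~(i): the threshold $p=23/11$ is sharp precisely because of the simultaneous vanishing of the $p^{3}$ and $p^{2}$ coefficients when summing the three contributions, and without that cancellation the statement in (\ref{eq1-lem7.2}) would appear to be a cubic rather than a linear condition on $p$. Once that identity is in hand, part~(ii) is an immediate consequence of the continuous dependence of the positive root of a quadratic on its coefficients.
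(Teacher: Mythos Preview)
Your argument for (\ref{eq1-lem7.2}) is identical to the paper's: both reduce to the factored identity
\[
\psi\Bigl(\tfrac{9(p-2)}{7-3p}\Bigr)-\tfrac{9(p-2)}{7-3p}=\frac{16(p-2)(11p-23)}{(7-3p)^{2}},
\]
and your bookkeeping of how the cubic and quadratic coefficients cancel is correct.

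For (\ref{eq2-lem7.2}) your route is genuinely different from the paper's. The paper works with the ratio $\tilde\psi(m):=\psi(m)/m$, first using (\ref{eq1-lem7.2}) and continuity to secure $\tilde\psi\geq\Gamma$ on a small left-neighborhood $(\mu-\delta_2,\mu]$ of $\mu:=9(p-2)/(7-3p)$, and then computing $\tilde\psi'(m)=\tfrac{2}{9}(p-1)(p+3)-\tfrac{(p-2)(2p-5)}{m^{2}}$ to show $\tilde\psi$ is nondecreasing on $[\mu,\infty)$, so the inequality propagates to the whole ray. You instead exploit the quadratic structure of $h_\Gamma(m)=\psi(m)-\Gamma m$ directly: for $p\in(23/11,7/3)\subset(2,5/2)$ the leading coefficient is positive and the constant term $(2p-5)(p-2)$ is negative, so for every $\Gamma$ the parabola $h_\Gamma$ has exactly one positive root $m_+(\Gamma)$, varying continuously in $\Gamma$; since $h_1(\mu)>0$ forces $m_+(1)<\mu$, a small perturbation to $\Gamma>1$ keeps $m_+(\Gamma)<\mu$, and setting $\delta_2(p)=\mu-m_+(\Gamma)$ finishes the job. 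This is correct and arguably cleaner, since it bypasses the derivative computation entirely; the paper's monotonicity argument, on the other hand, yields the slightly stronger qualitative statement that $\psi(m)/m$ is increasing on the whole ray, though that extra information is not used elsewhere.
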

\begin{proof}
Since $p>2$ and
\bess
\psi(\frac{9 (p-2)}{7-3 p})-\frac{9 (p-2)}{7-3 p}=\frac{16 (p-2) (11 p-23)}{(7-3 p)^2}>0,
\eess
we directly obtain (\ref{eq1-lem7.2}). To prove (\ref{eq2-lem7.2}), we let
\be
\tilde{\psi}(m)=\frac{\psi(m)}m,\ \ \ \ {\rm for\ all\ }m>0,\label{sflashflkhwbf}
\ee
this together with (\ref{eq1-lem7.2}) yields that $C_1=\tilde{\psi}(\frac{9 (p-2)}{7-3 p})-1$ is positive. The continuity of $\tilde{\psi}(m)$ allows us to pick $\delta_2=\delta_2(p)>0$ such that $\frac{9 (p-2)}{7-3 p}-\delta_2(p)>0$ and
\be
\tilde{\psi}(m)\geq\Gamma=1+\frac{C_1}2,\ \ \ \ {\rm for\ all\ }m\in\left(\frac{9 (p-2)}{7-3 p}-\delta_2(p),\frac{9 (p-2)}{7-3 p}\right].\label{kdlfjhkdslflduasrfyhe}
\ee
Using (\ref{sdfakhfdkla}) and (\ref{sflashflkhwbf}), we obtain
\bess
\tilde{\psi}'(m)=\frac{2}{9} (p-1) (p+3)-\frac{(p-2) (2 p-5)}{m^2},\ \ \ \ {\rm for\ all\ }m>0.
\eess
If $p\leq\frac52$, then $\tilde{\psi}'(m)\geq\frac{2}{9} (p-1) (p+3)>0$ for $m\in(0,\infty)$. If $p>\frac52$, then for $m\geq \frac{9 (p-2)}{7-3 p}$, there holds
\bess
\tilde{\psi}'(m)\geq\frac{2}{9} (p-1) (p+3)-(p-2) (2 p-5)\left(\frac{7-3 p}{9 (p-2)}\right)^2=\frac{1}{81} \left(129 p+\frac{1}{p-2}-176\right)>0.
\eess
In both case, we obtain that $\tilde{\psi}'>0$ on $\left[\frac{9 (p-2)}{7-3 p},\infty\right)$ and hence $\tilde{\psi}\geq \Gamma$ on $\left[\frac{9 (p-2)}{7-3 p}-\delta_2(p),\infty\right)$ by (\ref{kdlfjhkdslflduasrfyhe}).
\end{proof}
\begin{lem}\label{jsafdakldjsl}
Let $p\in(\frac{23}{11},\frac73)$. Then for all $m>1$, there exist $C=C(m)>0$ such that
\be
\int_\Omega n_\varepsilon^{m}(\cdot,t)\leq C(m),\ \ \ \ {for\ all}\ t\geq0\label{fsdghjyasdtgfar}
\ee
and
\be
\int^{t+1}_t\int_\Omega|\nabla n_\varepsilon^{\frac{m-2}p+1}|^p\leq C(m),\ \ \ \ {for\ all}\ t\geq0,\label{fgdanartgafgs}
\ee
\end{lem}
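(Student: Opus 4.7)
The idea is to bootstrap Lemma \ref{lem7.sec-ire} using the geometric growth property of the map $\psi$ provided by Lemma \ref{lem7.2}. Fix $p\in(\frac{23}{11},\frac73)$ and set
\[
r_*:=\frac{9(p-2)}{7-3p},\qquad \delta:=\min\{\delta_1(p),\delta_2(p)\},
\]
with $\delta_1(p)$ as in Lemma \ref{lem6.u} and $\delta_2(p)$ as in Lemma \ref{lem7.2}. Since $r_*>0$, shrinking $\delta$ if necessary we may assume $r_*-\delta>1$.

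The first step is to initialize the iteration. By Lemma \ref{lemfhdaskh}, for any $m_0\in(r_*-\delta,r_*)$ there exists $K_0=K_0(m_0)>0$ such that
\[
\int_\Omega n_\eps^{m_0}(\cdot,t)\le K_0,\qquad \int_t^{t+1}\int_\Omega|\nabla n_\eps^{\frac{m_0-2}{p}+1}|^p\le K_0,
\]
uniformly in $\eps\in(0,1)$ and $t\ge 0$. Since $m_0>r_*-\delta\ge r_*-\delta_1(p)$, the hypotheses of Lemma \ref{lem7.sec-ire} are satisfied with $m_*=m_0$, so that for any $m\le \psi(m_0)$ we obtain analogous bounds on $\int_\Omega n_\eps^m$ and on the corresponding $L^p$-gradient quantity.

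Now define $m_{k+1}:=\psi(m_k)$ inductively. Because $m_0>r_*-\delta_2(p)$, Lemma \ref{lem7.2} yields $m_1=\psi(m_0)\ge \Gamma m_0>m_0$, and in particular $m_1>r_*-\delta_2(p)$; iterating, $m_k\ge \Gamma^k m_0$, so $m_k\to\infty$. At each stage the hypotheses of Lemma \ref{lem7.sec-ire} remain in force (with $m_*=m_k$), and applying it produces constants $C_k=C(m_k,p,K_{k-1})$ such that
\[
\int_\Omega n_\eps^{m_k}(\cdot,t)\le C_k,\qquad \int_t^{t+1}\int_\Omega|\nabla n_\eps^{\frac{m_k-2}{p}+1}|^p\le C_k,
\]
uniformly in $\eps$ and $t\ge0$.

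Finally, given any $m>1$, pick $k$ so large that $m_k\ge m$. A H\"older interpolation between the mass conservation estimate \eqref{eq-n-mass-conservation} and the bound on $\int_\Omega n_\eps^{m_k}$ immediately yields \eqref{fsdghjyasdtgfar}. The corresponding spatio-temporal gradient estimate \eqref{fgdanartgafgs} is obtained by an application of Lemma \ref{lem4.1} with $m_*=\lceil m_k\rceil$ and $q=2$ (whose two hypotheses are available from \eqref{fsdghjyasdtgfar} for $m_*$ and from Lemma \ref{lem3.2}), noting that any prescribed $m>1$ satisfies the required constraint \eqref{m-constant-condition-lem4.1} once $m_k$ is chosen sufficiently large. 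The only subtle point is the initialization---one must verify that the threshold $r_*-\delta$ produced by Lemmas \ref{lem6.u} and \ref{lem7.2} lies below $r_*$ so that Lemma \ref{lemfhdaskh} can indeed feed Lemma \ref{lem7.sec-ire}; this is where the restriction $p>\frac{23}{11}$ (via $\psi(r_*)>r_*$) is essential.
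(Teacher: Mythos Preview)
Your iterative bootstrap via $m_{k+1}=\psi(m_k)$, initialized from Lemma~\ref{lemfhdaskh} at some $m_0\in(r_*-\delta,r_*)$ and driven by the growth $m_k\ge\Gamma^k m_0$ from Lemma~\ref{lem7.2}, is exactly the paper's argument. The induction step via Lemma~\ref{lem7.sec-ire} is also the same.

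There is, however, a genuine slip in your last paragraph. To recover \eqref{fgdanartgafgs} for a general $m>1$ you propose to apply Lemma~\ref{lem4.1} with $m_*=\lceil m_k\rceil$ large and $q=2$. But Lemma~\ref{lem4.1} explicitly requires $m>m_*$, and the \emph{left} endpoint of the admissible window \eqref{m-constant-condition-lem4.1}, namely $\frac{2q(p-1)-p}{2q(p-1)}m_*+\frac{p-2}{p-1}$, grows linearly with $m_*$. So enlarging $m_*$ makes it \emph{impossible}, not easier, to place a fixed small $m$ inside \eqref{m-constant-condition-lem4.1}; the sentence ``any prescribed $m>1$ satisfies the required constraint once $m_k$ is chosen sufficiently large'' is false.

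The repair is immediate and is what the paper (and Lemma~\ref{lemfhdaskh}) mean by ``an interpolation argument'': the quantity in \eqref{fgdanartgafgs} interpolates in $m$ by H\"older. Since $|\nabla n_\eps^{\frac{m-2}{p}+1}|^p=\bigl(\tfrac{m+p-2}{p}\bigr)^p n_\eps^{m-2}|\nabla n_\eps|^p$, writing $m-2=\theta(m_1-2)+(1-\theta)(m_2-2)$ with $\theta\in(0,1)$ and applying H\"older with exponents $1/\theta,1/(1-\theta)$ gives
\[
\int_t^{t+1}\!\int_\Omega n_\eps^{m-2}|\nabla n_\eps|^p
\le\Bigl(\int_t^{t+1}\!\int_\Omega n_\eps^{m_1-2}|\nabla n_\eps|^p\Bigr)^{\theta}
\Bigl(\int_t^{t+1}\!\int_\Omega n_\eps^{m_2-2}|\nabla n_\eps|^p\Bigr)^{1-\theta}.
\]
You already possess the gradient bound at $m_1=1$ (Lemma~\ref{lem3.2}) and at $m_2=m_k$ for every $k$, so this yields \eqref{fgdanartgafgs} for all $m\in(1,m_k]$, hence for all $m>1$.
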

\begin{proof} Since $p>\frac{23}{11}>\frac{545}{261}$, we can pick $m_0\in (1,\frac{9 (p-2)}{7-3 p})$ such that
\be
m_0>\frac{9 (p-2)}{7-3 p}-\min\{\delta_1(p),\delta_2(p)\},\label{hkfdjhasikfhkahgfv}
\ee
where $\delta_1(p)$ and $\delta_2(p)$ are given by Lemma \ref{lem7.sec-ire} and Lemma \ref{lem7.2} respectively. Define
\be
m_k=\psi(m_{k-1}),\ \ \ \ k\in\mathbb{N}=\{1,2,3,\cdots\},\label{lkjdsfaoi}
\ee
with $\psi:\daR\rightarrow\daR$ given by (\ref{sdfakhfdkla}) in Lemma \ref{lem7.2}. Combining (\ref{hkfdjhasikfhkahgfv}) with Lemma \ref{lem7.2}, an inductive argument shows that
\be
m_k\geq\Gamma^km_0,\ \ \ \ {\rm for\ all}\ k\in\mathbb{N}\label{faksdhjlkah}
\ee
with $\Gamma>1$ given by Lemma \ref{lem7.2}, whence in particular $m_k\rightarrow\infty$ as $k\rightarrow\infty$.

Due to the boundedness of $\Omega$, it suffices to show that for all $k\in\mathbb{N}$ there exists $C_1(k)>0$ such that for all $\varepsilon\in(0,1)$,
\be
\int_\Omega n_\varepsilon^{m_k}(\cdot,t)\leq C_1(k)\ \ \ {\rm and}\ \ \ \int^{t+1}_t\int_\Omega|\nabla n_\varepsilon^{\frac{m_k-2}p+1}|^p\leq C_1(k)\sj.\label{kdfjahlhjfda}
\ee
This can be proven again by an iterative approach. Indeed, for $k=0$, (\ref{kdfjahlhjfda}) can be derived from Lemma \ref{lemfhdaskh}, since $p>\frac{23}{11}>\frac{25}{12}$ and $m_0\in (1,\frac{9 (p-2)}{7-3 p})$. If (\ref{kdfjahlhjfda}) holds for some $k_0\geq0$ and some $C_1(k_0)>0$, then since (\ref{hkfdjhasikfhkahgfv}) and (\ref{faksdhjlkah}) ensure that $m_k\geq m_0>\frac{9 (p-2)}{7-3 p}-\delta_1(p)$, and again since $p\in(\frac{545}{261},\frac73)$, Lemma \ref{lem7.sec-ire} provides $C_2>0$ such that
\be
\int_\Omega n_\varepsilon^{m}(\cdot,t)\leq C_1(k)\ \ \ {\rm and}\ \ \ \int^{t+1}_t\int_\Omega|\nabla n_\varepsilon^{\frac{m-2}p+1}|^p\leq C_1(k)\sj\nonumber
\ee
with
\be
m=\frac{1}{9} \left\{2(p+3)(p-1)m_{k_0}^2+3(4p^2-5p-8)m_{k_0}+9(2p-5)(p-2)\right\}.\nonumber
\ee
From (\ref{lkjdsfaoi}) we know that $m=\psi(m_{k_0})=m_{k_0+1}$, which means (\ref{kdfjahlhjfda}) also holds for $k=k_0+1$ and thereby completes the proof.
\end{proof}

Using Lemma \ref{jsafdakldjsl} and Lemma \ref{lem6.u}, we can obtain further regularity of $\zzh u$ and $\nabla \zzh c$ through a standard regularization feature of the heat semigroup.
\begin{lem}\label{dklfhjaskflhel}
Let $p\in(\frac{23}{11},\frac73)$ and $m>1$. Then there exist $C(m)>0$ such that for all $\varepsilon\in(0,1)$, there hold
 \begin{eqnarray}
  \jfo |\zzh u(\cdot,t)|^m\leq C(m)\sjl.\label{euitohagjkhdgafiwqpq}
\end{eqnarray}
and
 \begin{eqnarray}
\jfo |\nabla\zzh c(\cdot,t)|^m\leq C(m)\sjl,\label{dkfaghldkjghalkdhj}
\end{eqnarray}
\end{lem}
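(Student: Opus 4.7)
The plan is to derive both estimates from Lemma~\ref{jsafdakldjsl}, which supplies uniform $L^{m'}$-bounds on $n_\varepsilon$ at every finite exponent $m' \geq 1$ in the present range $p \in (\tfrac{23}{11}, \tfrac{7}{3})$, combined with standard semigroup arguments.

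For the velocity field, fix $m > 1$ and pick $m_*$ large enough that $m_* > \tfrac{9(p-2)}{7-3p} - \delta_1(p)$ and $2\bigl(\tfrac{m_* p}{3} + m_* + p - 2\bigr) \geq m$. Lemma~\ref{jsafdakldjsl} then supplies hypothesis (\ref{ldkfmaldfnlajdfnad}), so Lemma~\ref{lem6.u} applies and yields a uniform-in-$t$ bound on $\|u_\varepsilon(\cdot,t)\|_{L^{2(m_* p/3 + m_* + p - 2)}(\Omega)}$; the embedding $L^{2(m_* p/3 + m_* + p - 2)}(\Omega) \hookrightarrow L^m(\Omega)$, valid because $\Omega$ is bounded, then delivers (\ref{euitohagjkhdgafiwqpq}).

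For the gradient of $c_\varepsilon$, I would argue by a bootstrap semigroup approach. A baseline bound $\|\nabla c_\varepsilon(\cdot,t)\|_{L^2(\Omega)} \leq C$ comes from combining the $L^\infty$-estimate (\ref{eq-c-maxmum-principle-s0}) with the uniform control on $\int_\Omega \tfrac{|\nabla c_\varepsilon|^2}{c_\varepsilon}$ extracted from Lemma~\ref{lem3.1}. Using the second equation of (\ref{model-approximate}), a variation-of-constants representation with respect to the Neumann heat semigroup gives
\[
\nabla c_\varepsilon(\cdot, t) = \nabla e^{t\Delta} c_0 - \int_0^t \nabla e^{(t-s)\Delta}\bigl[u_\varepsilon \cdot \nabla c_\varepsilon + F_\varepsilon(n_\varepsilon) c_\varepsilon\bigr](\cdot,s)\, ds.
\]
Combined with the standard smoothing estimate
\[
\|\nabla e^{\sigma \Delta}\varphi\|_{L^m(\Omega)} \leq C\bigl(1 + \sigma^{-\tfrac12 - \tfrac{3}{2}(\tfrac{1}{r} - \tfrac{1}{m})}\bigr)e^{-\lambda\sigma}\|\varphi\|_{L^r(\Omega)}, \qquad 1 \leq r \leq m,
\]
whose temporal singularity is integrable whenever $\tfrac{1}{r} - \tfrac{1}{m} < \tfrac13$, a Hölder split $\tfrac{1}{r} = \tfrac{1}{q_1} + \tfrac{1}{m_0}$ together with the $L^{q_1}$-bound on $u_\varepsilon$ from the first part and an already-established $L^{m_0}$-bound on $\nabla c_\varepsilon$ controls $\|u_\varepsilon \cdot \nabla c_\varepsilon\|_{L^r}$, while $\|F_\varepsilon(n_\varepsilon)c_\varepsilon\|_{L^r} \leq s_0 \|n_\varepsilon\|_{L^r}$ is uniformly bounded by Lemma~\ref{jsafdakldjsl} and (\ref{eq-c-maxmum-principle-s0}). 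A generalized Grönwall argument then upgrades the exponent from $m_0$ to any $m$ with $\tfrac{1}{m_0} - \tfrac{1}{m} < \tfrac13$; iterating a finite number of times (or noting that once $m_0 \geq 3$ every finite $m$ is reached in one step) closes the bootstrap and yields (\ref{dkfaghldkjghalkdhj}).

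The main delicacy lies in the $\nabla c_\varepsilon$ bootstrap: the exponents $r$, $q_1$, $m_0$, $m$ have to be chosen simultaneously compatible with the semigroup admissibility $\tfrac{1}{r}-\tfrac{1}{m} < \tfrac13$, Hölder's identity $\tfrac{1}{r} = \tfrac{1}{q_1} + \tfrac{1}{m_0}$, and the mapping constraint $r \leq m$. Since the first part grants $u_\varepsilon$ in every $L^{q_1}(\Omega)$ and Lemma~\ref{jsafdakldjsl} grants $n_\varepsilon$ in every $L^{q}(\Omega)$, such a choice always exists, and no additional restriction on $p$ beyond the hypothesis $p > \tfrac{23}{11}$ is incurred.
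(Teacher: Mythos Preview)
Your proposal is correct and follows essentially the same route as the paper. For the velocity bound both you and the paper combine Lemma~\ref{jsafdakldjsl} with Lemma~\ref{lem6.u}; for the gradient bound on $c_\varepsilon$ the paper simply invokes the semigroup gradient estimates of \cite{Horstmann-Winkler-JDE-2005}, whereas you spell out the underlying bootstrap via the variation-of-constants formula. One minor terminological point: the step you call a ``generalized Gr\"onwall argument'' is really a direct bootstrap, since the right-hand side uses the already-known $L^{m_0}$ bound on $\nabla c_\varepsilon$ rather than the $L^m$ norm being estimated, so no Gr\"onwall inequality is actually required.
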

\begin{proof} (\ref{euitohagjkhdgafiwqpq}) is a direct consequence of a combination of Lemma \ref{jsafdakldjsl} with Lemma \ref{lem6.u}. Based on (\ref{euitohagjkhdgafiwqpq}) and Lemma \ref{jsafdakldjsl}, using the well-known results on gradient regularity in semilinear heat equation (\cite{Horstmann-Winkler-JDE-2005}), we can arrive at (\ref{dkfaghldkjghalkdhj}).
\end{proof}
A Moser-type iterative procedure which was similar to \cite[Lemma 4.1]{first-paper}, shows the boundedness of $\zzh n(\cdot,t)$ in $\lp\infty$ for all $t\geq0$. For readers' convenience, we shall give the detailed proof here.
\begin{lem}\label{nwuqiong}
If $p>\frac{23}{11}$, then there exists $C>0$ such that for all $\varepsilon\in(0,1)$ we have
\be
\|\zzh n(\cdot,t)\|_{L^\infty(\Omega)}\leq C\sjl.
\ee
\end{lem}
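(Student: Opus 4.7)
The plan is to run a Moser-type iteration on the first equation of \eqref{model-approximate}, taking advantage of Lemma \ref{jsafdakldjsl} and Lemma \ref{dklfhjaskflhel}, which already provide uniform-in-$\varepsilon$ bounds for $n_\varepsilon$ in every $L^m(\Omega)$ and for $\nabla c_\varepsilon$ (as well as $u_\varepsilon$) in every $L^m(\Omega)$ under the standing assumption $p>\frac{23}{11}$. Testing the $n_\varepsilon$-equation against $n_\varepsilon^{q-1}$ for $q\geq 2$, exploiting $\nabla\cdot u_\varepsilon=0$ to kill the convective term, and using $F_\varepsilon'(\cdot)\leq 1$ together with $(|\nabla n_\varepsilon|^2+\varepsilon)^{(p-2)/2}|\nabla n_\varepsilon|^2\geq|\nabla n_\varepsilon|^p$, one obtains
\[
\frac{1}{q}\frac{d}{dt}\int_\Omega n_\varepsilon^q+\frac{q-1}{\tilde q^{\,p}}\int_\Omega\bigl|\nabla n_\varepsilon^{\tilde q}\bigr|^p\leq (q-1)\int_\Omega n_\varepsilon^{q-1}|\nabla c_\varepsilon|\,|\nabla n_\varepsilon|,
\]
where $\tilde q:=(q+p-2)/p$. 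A Young inequality absorbs half of the dissipation on the left and leaves a right-hand side bounded by $C(q)\int_\Omega n_\varepsilon^{q-(p-2)/(p-1)}|\nabla c_\varepsilon|^{p/(p-1)}$.

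Next I would apply H\"older on this term, choosing the exponent on $|\nabla c_\varepsilon|^{p/(p-1)}$ as large as needed and invoking Lemma \ref{dklfhjaskflhel} to absorb the corresponding $L^s$-norm of $\nabla c_\varepsilon$ into a constant. This reduces the problem to bounding an $L^\alpha$-norm of $n_\varepsilon$ with $\alpha$ only slightly larger than $q$. Gagliardo--Nirenberg, applied to $n_\varepsilon^{\tilde q}$ and interpolated between $\|\nabla n_\varepsilon^{\tilde q}\|_{L^p}$ and a fixed lower $L^\beta$-norm of $n_\varepsilon$, produces a factor $\|\nabla n_\varepsilon^{\tilde q}\|_{L^p}^{\theta\alpha/\tilde q}$ with $\theta\alpha/\tilde q<p$; a further Young inequality then separates a portion of the dissipation from a lower-order remainder, the latter being controlled uniformly through Lemma \ref{jsafdakldjsl}.

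Setting up a geometric sequence $q_k\nearrow\infty$ (say $q_k:=2^k q_0$) and letting $M_k:=\sup_{t\geq 0}\|n_\varepsilon(\cdot,t)\|_{L^{q_k}(\Omega)}$, the previous step yields an ODI of the form
\[
\frac{d}{dt}\int_\Omega n_\varepsilon^{q_k}+\lambda_k\int_\Omega n_\varepsilon^{q_k}\leq b_k\bigl(1+M_{k-1}^{\gamma_k}\bigr),
\]
whence $M_k^{q_k}\leq (b_k/\lambda_k)\bigl(1+M_{k-1}^{\gamma_k}\bigr)$. Taking logarithms and iterating produces a recursion for $q_k^{-1}\log M_k$ whose limit is finite, so $M_k$ stays bounded as $k\to\infty$; letting $k\to\infty$ then yields the claimed uniform $L^\infty$-bound on $n_\varepsilon$.

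The main technical obstacle is the careful bookkeeping of the $q$-dependence of $\lambda_k,b_k,\gamma_k$ produced by Young's inequality, by the factor $\tilde q^{\,-p}$ from the $p$-Laplacian dissipation, and by the Gagliardo--Nirenberg constants, so that the telescoping sum controlling $q_k^{-1}\log M_k$ converges. This is precisely the standard Moser machinery adapted to the $p$-Laplacian setting; the argument will closely parallel \cite[Lemma 4.1]{first-paper}, to which the statement itself refers.
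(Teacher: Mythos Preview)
Your proposal is correct and follows essentially the same Moser-iteration strategy as the paper: test by $n_\varepsilon^{q-1}$, use Young to split off the $p$-Laplacian dissipation, control the $|\nabla c_\varepsilon|$-factor via Lemma~\ref{dklfhjaskflhel} and H\"older, interpolate by Gagliardo--Nirenberg against the previous level, and track the $q$-dependence of all constants through a geometric recursion. The paper's only notable implementation detail is its specific choice $m_k=2m_{k-1}+2-p$ (rather than your $q_k=2^kq_0$), tailored so that the lower exponent in the Gagliardo--Nirenberg step is exactly $m_{k-1}$; this simplifies the bookkeeping but is not essential, and your geometric sequence works equally well.
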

\begin{proof} We now recursively define
\be
m_k=2m_{k-1}+2-p,\ \ \ \ k\in\mathbb{N}=\{1,2,3,\cdots\},\label{mk-moser}
\ee
with
\be
m_0>p-2.\label{m0-moser}
\ee
 We note that (\ref{mk-moser}) and (\ref{m0-moser}) ensure that $\{m_k\}_{k\in\mathds{N}}$ is a nonnegative strictly increasing sequence and
\be
m_k\nearrow\infty\ \ \ {\rm as}\ \ \ k\rightarrow\infty, \label{hdkafhaihelf}
\ee
and moreover there holds
\be
c_1 2^k\leq m_k\leq c_2 2^k,\ \ \ {\rm for\ all}\ \ \ k\in\mathbb{N},\label{dengjiamk}
\ee
where $c_1>0$ and $c_2>0$ which, as all constants $C_1$, $C_2$,$\cdots$ appearing below, are independent of $k$. Define
\be
\theta_k=2\cdot\frac{m_k+p-2}{m_k+p'-2}>2,\label{ladhfsliauewhfl}
\ee
then
\be
\theta'_k=\frac{\theta_k}{\theta_k-1}\in(1,2)\ \ \ \ {\rm and}\ \ \ \  \frac1{\theta'_k}>\frac12>\frac{p'}4. \label{thetayipie}
\ee

Our goal is to derive a recursive inequality for
\be
M_k=\sup_{t\in(0,\infty)}\jfo\widehat{\zzh n}^{m_k}(x,t)dx,\ \ \ \ k\in\mathbb{N},\label{fdhaihieufh}
\ee
where $\widehat{\zzh n}(x,t)=\max\{\zzh n(x,t),1\}$ for $x\in\overline{\Omega}$ and $t\in [0,\infty)$. To this end, we may test the second equation of (\ref{model-approximate}) by $m_k\zzh n^{m_k-1}$ and employ Young's inequality to obtain for $k\geq1$
\bess
&&\frac d{dt}\jfo\zzh n^{m_k}+m_k(m_k-1)\jfo\zzh n^{m_k-2}\left(|\nabla\zzh n^2|+\varepsilon\right)^{\frac{p-2}2}|\nabla\zzh n|^2\nonumber\\
&=&m_k(m_k-1)\jfo\zzh n^{m_k-1} F'_\varepsilon(\zzh n)\chi(\zzh c)\nabla\zzh c\cdot\nabla\zzh n\nonumber\\
&\leq&C_1m_k(m_k-1)\jfo\zzh n^{m_k-1} |\nabla\zzh c|\cdot|\nabla\zzh n|\nonumber\\
&=&C_1m_k(m_k-1)\jfo\zzh n^{\frac{m_k-2}p}|\nabla\zzh n|\cdot\zzh n^{\frac{2-m_k}p+m_k-1}|\nabla\zzh c|\nonumber\\
&\leq&C_1m_k(m_k-1)\jfo\left[\frac1{2C_1}\zzh n^{m_k-2}|\nabla\zzh n|^p+\frac{p-1}p\left(\frac p{2C_1}\right)^{-\frac1{p-1}}\cdot\left(\zzh n^{\frac{2-m_k}p+m_k-1}|\nabla\zzh c|\right)^{p'}\right]
\eess
for all $t>0$. This together with the H${\rm{\ddot{o}}}$lder inequality, (\ref{thetayipie}) and Lemma \ref{dklfhjaskflhel} shows that there exists some $C_{2}>0$ and $C_{3}>0$ such that
\bes
&&\frac d{dt}\jfo\zzh n^{m_k}+\frac{m_k(m_k-1)}2\left(\frac p{m_k+p-2}\right)^p\jfo|\nabla\zzh n^{\frac{m_k-2}p+1}|^p\nonumber\\
&=&\frac d{dt}\jfo\zzh n^{m_k}+\frac{m_k(m_k-1)}2\jfo\zzh n^{m_k-2}|\nabla\zzh n|^p\nonumber\\
&\leq&C_2m_k(m_k-1)\jfo\left(\zzh n^{\frac{2-m_k}p+m_k-1}|\nabla\zzh c|\right)^{p'}\nonumber\\
&=&C_2m_k(m_k-1)\jfo\zzh n^{m_k-2+p'}|\nabla\zzh c|^{p'}\nonumber\\
&\leq&C_2m_k(m_k-1)|\Omega|^{\frac1{\theta'_k}-\frac{p'}4}\left(\jfo\zzh n^{(m_k-2+p')\theta_k}\right)^{\frac1{\theta_k}}\cdot\left(\jfo|\nabla\zzh c|^4\right)^{\frac{p'}4}\nonumber\\
&\leq&C_3m_k(m_k-1)\left(\jfo\zzh n^{(m_k-2+p')\theta_k}\right)^{\frac1{\theta_k}}\sjj.\label{dfilusahoiefh}
\ees

We first estimate the right-hand side of (\ref{dfilusahoiefh}). Noting that $\frac{m_k+p-2}2=m_{k-1}$ and ${\frac{(m_k-2+p')\theta_k}{m_k+p-2}}=2$ by (\ref{mk-moser}) and (\ref{ladhfsliauewhfl}) respectively. This together with the Gagliardo-Nirenberg inequality enables us to find $C_4>0$ such that
\bes\label{eq3-moser}
C_3\left(\jfo\zzh n^{(m_k-2+p')\theta_k}\right)^{\frac1{\theta_k}}
&=&C_3\|\zzh n^{\frac{m_k+p-2}p}\|^{\frac {2p}{\theta_k}}_{2p}\nonumber\\
&\leq&C_4\|\nabla\zzh n^{\frac{m_k+p-2}p}\|_p^{\frac{{2p}a}{\theta_k}}\cdot\|\zzh n^{\frac{m_k+p-2}p}\|_{\frac p2}^{\frac{{2p}(1-a)}{\theta_k}}+C_4\|\zzh n^{\frac{m_k+p-2}p}\|_{\frac p2}^{\frac{{2p}}{\theta_k}}\nonumber\\
&=&C_4\|\nabla\zzh n^{\frac{m_k+p-2}p}\|_p^{\frac{2pa}{\theta_k}}\cdot\|\zzh n^{m_{k-1}}\|_1^{\frac{4(1-a)}{\theta_k}}+C_4\|\zzh n^{m_{k-1}}\|_1^{\frac4{\theta_k}},
\ees
for all $t>0$, with
\be
a=\frac{\frac 6p-\frac 3{2p}}{1-\frac 3p+\frac 6p}=\frac{9}{2 p+6}\in(0,1).\nonumber
\ee
In the view of (\ref{fdhaihieufh}), we can apply Young's inequality to (\ref{eq3-moser}) to obtain
\bes
&&C_3\left(\jfo\zzh n^{(m_k-2+p')\theta_k}\right)^{\frac1{\theta_k}}\nonumber\\
&\leq&C_4M_{k-1}^{\frac{4(1-a)}{\theta_k}}\left(\jfo|\nabla\zzh n^{\frac{m_k+p-2}p}|^p\right)^{\frac{{2}a}{\theta_k}}+C_4M_{k-1}^{\frac4{\theta_k}}\nonumber\\
&\leq&C_4\left( \eta\jfo|\nabla\zzh n^{\frac{m_k+p-2}p}|^p+C_{\eta}
\left(M_{k-1}^{\frac{4(1-a)}{\theta_k}}\right)^{\frac{\theta_k}{\theta_k-{2}a}}\right)+C_4M_{k-1}^{\frac4{\theta_k}}\sjj\label{eq4-moser}
\ees
where $\eta=\frac1{4C_4}\left(\frac p{m_k+p-2}\right)^p>0$ and
\be
C_{\eta}=\left(\eta\frac{\theta_k}{2a}\right)^{-\frac1{\frac{\theta_k}{2a}-1}}\cdot\frac{\frac{\theta_k}{2a}-1}{\frac{\theta_k}{2a}}
=\frac{\theta_k-2a}{\theta_k}\left(\frac{\theta_k}{2a}\eta\right)^{-\frac{2a}{\theta_k-2a}}.\nonumber
\ee
Letting
\be
\tilde{b}=2^{\frac{ap}{1-a}}>1,\label{dfjailifhlu}
\ee
 then by (\ref{dengjiamk}) we have
\bes
C_\eta&\leq&\eta^{-\frac{2a}{\theta_k-2a}}\nonumber\\
&=& \left(4C_4\right)^{\frac{2a}{\theta_k-2a}}\left(\frac {m_k+p-2}p\right)^{\frac{2a}{\theta_k-2a}p}\nonumber\\
&\leq& \left(4C_4\right)^{\frac{2a}{\theta_k-2a}}(m_k^{\frac{2a}{\theta_k-2a}p}+1)\nonumber\\
&\leq&\left(4C_4\right)^{\frac a{1-a}}2m_k^{\frac a{1-a}p}\nonumber\\
&\leq&C_5 \tilde{b}^k\label{ldfkhadsfiauehlo}
\ees
since $\frac a{1-a}>0$ and $\theta_k>2$. A combination of (\ref{eq4-moser})-(\ref{ldfkhadsfiauehlo}) shows that
\bes
&&C_3\left(\jfo\zzh n^{(m_k-2+p')\theta_k}\right)^{\frac1{\theta_k}}\nonumber\\
&\leq&\frac1{4}\left(\frac p{m_k+p-2}\right)^p\jfo|\nabla\zzh n^{\frac{m_k+p-2}p}|^p+C_4C_5 \tilde{b}^k
M_{k-1}^{\frac{4(1-a)}{\theta_k-2a}}+C_4M_{k-1}^{\frac4{\theta_k}}\sjj\label{ydjkdasfgaruj}
\ees
this together with (\ref{ydjkdasfgaruj}) yields
\bes\label{eq5-moser}
&&\frac d{dt}\jfo\zzh n^{m_k}+\frac{m_k(m_k-1)}4\left(\frac p{m_k+p-2}\right)^p\jfo|\nabla\zzh n^{\frac{m_k-2}p+1}|^p\nonumber\\
&\leq&C_4C_{\eta}m_k(m_k-1)\left(M_{k-1}^{\frac{4(1-a)}{\theta_k}}\right)^{\frac{\theta_k}{\theta_k-2a}}+C_4m_k(m_k-1)M_{k-1}^{\frac4{\theta_k}}\nonumber\\
&\leq&C_4C_5m_k(m_k-1)\tilde{b}^kM_{k-1}^{\frac{4(1-a)}{\theta_k-2a}}+C_4m_k(m_k-1)M_{k-1}^{\frac4{\theta_k}}\sjj.
\ees

We can generate a linear absorption term in (\ref{eq5-moser}) again by the routine method. Indeed, we can use the Gagliardo-Nirenberg inequality to estimate
\bes\label{eq6-moser}
\jfo\zzh n^{m_k}&=&\|\zzh n^{\frac{m_k+p-2}p}\|_{\frac{pm_k}{m_k+p-2}}^{\frac{pm_k}{m_k+p-2}}\nonumber\\
&\leq&C_6\|\nabla\zzh n^{\frac{m_k+p-2}p}\|_p^{\frac{pm_k}{m_k+p-2}b}\|\zzh n^{\frac{m_k+p-2}p}\|_{\frac p2}^{\frac{pm_k}{m_k+p-2}(1-b)}+C_6\|\zzh n^{\frac{m_k+p-2}p}\|_{\frac p2}^{\frac{pm_k}{m_k+p-2}}\nonumber\\
&=&C_6\|\nabla\zzh n^{\frac{m_k+p-2}p}\|_p^{\frac{pm_k}{m_k+p-2}b}\|\zzh n^{m_{k-1}}\|_1^{\frac{pm_k}{s(m_k+p-2)}(1-b)}+C_6\|\zzh n^{m_{k-1}}\|_1^{\frac{pm_k}{s(m_k+p-2)}}\nonumber\\
&\leq&C_6\|\nabla\zzh n^{\frac{m_k+p-2}p}\|_p^{\frac{pm_k}{m_k+p-2}b}M_{k-1}^{\frac{2(1-b)m_k}{m_k+p-2}}+C_6M_{k-1}^{\frac{2m_k}{m_k+p-2}}\sjj
\ees
with
\be
b=\frac{\frac2p-\frac{m_k+p-2}{pm_k}}{\frac1n-\frac1p+\frac2p}=\frac{\frac2p\left(1-\frac{m_{k-1}}{m_k}\right)}{\frac1n+\frac1p}\in(0,1),\nonumber
\ee
since $1-\frac{m_{k-1}}{m_k}>0$ and $\frac{m_k+p-2}{pm_k}-\frac1p>0$. Using Young's inequality, we can derive from (\ref{eq6-moser}) that
\bes\label{eq8-moser}
\jfo\zzh n^{m_k}&\leq&C_6\|\nabla\zzh n^{\frac{m_k+p-2}p}\|_p^p+C_6M_{k-1}^{\frac{2m_k}{m_k+\frac{p-2}{1-b}}}+C_6M_{k-1}^{\frac{2m_k}{m_k+p-2}}\nonumber\\
&\leq&C_6\|\nabla\zzh n^{\frac{m_k+p-2}p}\|_p^p+2C_6M_{k-1}^{\frac{2m_k}{m_k+p-2}}\sjj
\ees
this together with (\ref{eq5-moser}) yields
\bes\label{eq9-moser}
&&\frac d{dt}\jfo\zzh n^{m_k}+\frac{m_k(m_k-1)}{4C_6}\left(\frac p{m_k+p-2}\right)^p\jfo\zzh n^{m_k}\nonumber\\
&\leq&C_4C_5m_k(m_k-1)\tilde{b}^kM_{k-1}^{\frac{4(1-a)}{\theta_k-2a}}+C_4m_k(m_k-1)M_{k-1}^{\frac4{\theta_k}}\nonumber\\
&&+\frac{m_k(m_k-1)}{2}\left(\frac p{m_k+p-2}\right)^pM_{k-1}^{\frac{2m_k}{m_k+p-2}}\sjj.
\ees
Since $\theta_k>2$ and $a\in(0,1)$, we conclude that
\be
\frac{4(1-a)}{\theta_k-2a}=\frac {4}{\theta_k}\cdot\frac{1-a}{1-a\frac{2}{\theta_k}}<\frac {4}{\theta_k}<2.\nonumber
\ee
This enables us to infer from (\ref{eq9-moser}) that
\bess
&&\frac d{dt}\jfo\zzh n^{m_k}+\frac{m_k(m_k-1)}{4C_6}\left(\frac p{m_k+p-2}\right)^p\jfo\zzh n^{m_k}\\
&\leq&2C_4C_5m_k(m_k-1)\tilde{b}^kM_{k-1}^2+\frac{m_k(m_k-1)}{2}\left(\frac p{m_k+p-2}\right)^pM_{k-1}^2\sjj
\eess
which can be rewritten in the form of
\bess
\frac d{dt}\jfo\zzh n^{m_k}&\leq&2C_4C_5m_k(m_k-1)\tilde{b}^kM_{k-1}^2\\
&&+\frac{m_k(m_k-1)}{2}\left(\frac p{m_k+p-2}\right)^p\left(M_{k-1}^2-\frac1{2C_6}\jfo\zzh n^{m_k}\right)\sjj.
\eess
An integration of this ODI shows that
\bes
M_k\leq\max\{\jfo n_{0\varepsilon}^{m_k},2C_6\left(4C_4C_5\tilde{b}^k\left(\frac{m_k+p-2}p\right)^p+1\right)M_{k-1}^2\}.
\ees

Therefore, in the case when $2C_6\left(4C_4C_5\tilde{b}^k\left(\frac{m_k+p-2}p\right)^p+1\right)M_{k-1}^2<\jfo n_{0\varepsilon}^{m_k}$ holds for infinitely many $k\geq1$, we obtain
\bess
\sup_{t\in(0,\infty)}\left(\jfo\zzh n^{m_{k-1}}\right)^{\frac1{m_{k-1}}}\leq\left(\jfo n_{0\varepsilon}^{m_k}\right)^{\frac{1}{2m_{k-1}}}
\eess
for all such $k$, and hence conclude that
$$\|\zzh n(t)\|_\infty\leq\|n_{0\varepsilon}\|_\infty\sjj,$$
because ${\frac{m_k}{2m_{k-1}}}\rightarrow1$ as $k\rightarrow\infty$ according to (\ref{mk-moser}) and (\ref{hdkafhaihelf}).

Conversely, upon enlarging $C_6$ if necessary we may assume that
\bess
M_k&\leq&2C_6\left(4C_4C_5\tilde{b}^k\left(\frac{m_k+p-2}p\right)^p+1\right)M_{k-1}^2,\ \ \ \ {\rm for\ all}\ \ k\geq1.
\eess
Then from  (\ref{dengjiamk}) and the definition of $\tilde{b}$ (\ref{dfjailifhlu}), we see that there exist $C_8=32C_4C_5C_6>0$, and $C_9=c_2^pC_8>0$ such that
\bess
M_k&\leq&2C_6\left(4C_4C_5\tilde{b}^k\left(m_k^p+1\right)+1\right)M_{k-1}^2\\
&\leq&16C_4C_5C_6\tilde{b}^k\left(m_k^p+1\right)M_{k-1}^2\\
&\leq& C_8\tilde{b}^k\left(c_22^k\right)^pM_{k-1}^2\\
&\leq&C_9 \left(\tilde{b}2^p\right)^kM_{k-1}^2\\
&=&C_9 \tilde{d}^k M_{k-1}^2,\ \ \ \ {\rm for\ all}\ \ k\geq1,
\eess
where $\tilde{d}=\tilde{b}\cdot 2^p=2^{p\left(\frac a{1-a}+1\right)}=2^{\frac p{1-a}}>1$. By a straight forward induction, this yields
\be
M_k\leq C_9^{\sum^{k-1}_{j=0}2^j}\cdot\tilde{d}^{\sum^{k-1}_{j=0}(k-j)2^j}\cdot M_0^{2^k}\label{iufahieygirfa}
\ee
for all $k\geq1$. Using (\ref{dengjiamk}), the identities ${\sum^{k-1}_{j=0}2^j}\leq 2^k$
and $\sum^{k-1}_{j=0}(k-j)2^j=2^{k+1}$, we have
\be\label{limit2-moser}
\lim_{k\rightarrow\infty}\frac 1{m_k}{2^k}\leq\frac1{c_1},\ \ \ \lim_{k\rightarrow\infty}\frac1{m_k}{\sum^{k-1}_{j=0}2^j}\leq\frac1{c_1}\ \ \ {\rm and\ }\ \ \
\lim_{k\rightarrow\infty}\frac 1{m_k}\sum^{k-1}_{j=0}(k-j)2^j=2^{k+1}\leq\frac2{c_1}.
\ee
Finally, a combination of (\ref{fdhaihieufh}), (\ref{iufahieygirfa}) and (\ref{limit2-moser}) concludes that
\be
\|\zzh n(t)\|_\infty\leq C_9^\frac1{c_1}\cdot\tilde{d}^\frac2{c_1}\|n_{0\varepsilon}\|_\infty\sjj.\nonumber
\ee
\end{proof}
Based on the regularities we have obtained, we can obtain the following estimates.
\begin{lem}{\rm(\cite[Lemma 8.4]{Winkler-JDE-2018})}\label{hdfikaufyha}
There exist $\theta\in(0,1)$ with the property that one can find $C>0$ such that for all $\varepsilon\in(0,1)$,
\be
\|\zzh c\|_{C^\theta(\overline{\Omega}\times[t,t+1])}\leq C\sjl
\ee
and
\be
\|\zzh u\|_{C^\theta(\overline{\Omega}\times[t,t+1])}\leq C\sjl,
\ee
and that for all $\tau>0$ it is possible to choose $C(\tau)>0$ fulfilling
\be
\|\nabla\zzh c\|_{C^\theta(\overline{\Omega}\times[t,t+1])}\leq C(\tau),\ \ \ \ for\ all\ t\geq\tau.
\ee
\end{lem}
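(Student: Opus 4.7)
The plan is to leverage the uniform estimates assembled in Lemmas \ref{nwuqiong} and \ref{dklfhjaskflhel}, which provide $\eps$-independent bounds for $\zzh n$ in $L^\infty(\Omega\times(0,\infty))$ and for $\zzh u$ and $\nabla\zzh c$ in $L^\infty((0,\infty);L^q(\Omega))$ for every finite $q$. These bounds mean that once we freeze the nonlinear couplings, the three equations in \eqref{model-approximate} become linear parabolic or Stokes problems whose coefficients and right-hand sides are controlled uniformly in $\eps$. The Hölder regularity of $\zzh c$, $\zzh u$, and $\nabla\zzh c$ should therefore follow from classical parabolic regularity theorems applied to each equation in turn, in a bootstrap fashion.

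First I would treat $\zzh u$. The third equation in \eqref{model-approximate} reads $\partial_t\zzh u=\Delta\zzh u+\nabla P_\eps+\zzh n\nabla\phi$ with $\zzh u|_{\partial\Omega}=0$, and since $\phi\in W^{2,\infty}(\Omega)$ and $\|\zzh n\|_{L^\infty}$ is bounded uniformly in $\eps$ by Lemma \ref{nwuqiong}, the forcing $\zzh n\nabla\phi$ lies in $L^\infty(\Omega\times(0,\infty))$ uniformly. Combining $L^p$ maximal regularity for the Stokes semigroup with the Sobolev embedding $W^{2,q}\hookrightarrow C^\theta$ for $q>3/(1-\theta)$, one obtains uniform Hölder bounds of $\zzh u$ on strips $\overline{\Omega}\times[t,t+1]$. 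Alternatively, the Neustupa-style or Sohr-style $L^\infty_tC^\theta_x$ estimates for the instationary Stokes system with bounded body force yield the same conclusion.

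Next I would treat $\zzh c$ by viewing its equation as a linear parabolic equation $\partial_t\zzh c-\Delta\zzh c=-\zzh u\cdot\nabla\zzh c-F_\eps(\zzh n)\zzh c$ in divergence form with homogeneous Neumann boundary condition. Thanks to Lemma \ref{lem2.2} together with Lemmas \ref{nwuqiong} and \ref{dklfhjaskflhel}, the convective coefficient $\zzh u$ is uniformly bounded in $L^\infty_tL^q_x$ for arbitrarily large $q$, $\nabla\zzh c$ is uniformly bounded in $L^\infty_tL^q_x$, and the zero-order term $F_\eps(\zzh n)\zzh c$ lies in $L^\infty(\Omega\times(0,\infty))$. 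A direct appeal to the standard Hölder regularity theory for linear parabolic equations with bounded coefficients (for instance, the De Giorgi--Nash--Moser or Porzio--Vespri results, or the Ladyzhenskaya--Solonnikov--Ural'tseva framework from \cite{Winkler-JDE-2018}) delivers a uniform $C^\theta$ estimate of $\zzh c$ on each strip $\overline{\Omega}\times[t,t+1]$ with some $\theta\in(0,1)$ independent of $\eps$.

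Finally, for the gradient estimate of $\zzh c$ away from $t=0$, I would use the variation-of-constants representation $\nabla\zzh c(\cdot,t)=\nabla e^{(t-\tau/2)\Delta_{\mathcal{N}}}\zzh c(\cdot,\tau/2)+\int_{\tau/2}^{t}\nabla e^{(t-s)\Delta_{\mathcal{N}}}\bigl[-\zzh u\cdot\nabla\zzh c-F_\eps(\zzh n)\zzh c\bigr](\cdot,s)\,ds$ for $t\geq\tau$, upgrading the initial slice at $t=\tau/2$ via the $C^\theta$ bound from the previous step. The smoothing estimates of the Neumann heat semigroup in \cite{Horstmann-Winkler-JDE-2005} then yield $\nabla\zzh c\in L^\infty([\tau,\infty);C^\sigma(\overline\Omega))$, and a further application of Schauder-type theory to the $c$-equation (whose coefficients are now Hölder in both variables by the preceding steps) promotes this to the desired parabolic Hölder estimate of $\nabla\zzh c$ on each $\overline{\Omega}\times[t,t+1]$ with $t\geq\tau$. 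The main technical point, and the step I expect to be the most delicate, is exactly this last bootstrap: one must verify that all coefficient norms that enter the Schauder constant remain bounded uniformly in $\eps$, which hinges on using Lemma \ref{dklfhjaskflhel} with sufficiently high integrability exponents before interpolating into a Hölder space.
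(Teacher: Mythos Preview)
The paper does not supply its own proof here; the lemma is quoted verbatim from \cite[Lemma~8.4]{Winkler-JDE-2018}, and the required ingredients --- the uniform $L^\infty$ bound on $\zzh n$ from Lemma~\ref{nwuqiong} and the arbitrarily high $L^q$ bounds on $\zzh u$ and $\nabla\zzh c$ from Lemma~\ref{dklfhjaskflhel} --- match those used by Winkler. Your outline is correct and is essentially the route taken in the cited reference: Stokes regularity with bounded forcing for $\zzh u$, standard scalar parabolic H\"older theory for $\zzh c$, and a smoothing/bootstrap argument away from $t=0$ for $\nabla\zzh c$.

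One small point deserves care in your final step. You invoke ``Schauder-type theory to the $c$-equation (whose coefficients are now H\"older in both variables)'', but the zero-order coefficient $F_\eps(\zzh n)$ is only known to be in $L^\infty$, not in $C^\theta$, since no H\"older regularity of $\zzh n$ has been established. The fix is standard and is what Winkler in fact does: treat $g_\eps:=-\zzh u\cdot\nabla\zzh c-F_\eps(\zzh n)\zzh c$ as a source term which, by Lemmas~\ref{lem2.2}, \ref{nwuqiong}, and \ref{dklfhjaskflhel}, lies in $L^\infty((0,\infty);L^q(\Omega))$ for every $q<\infty$ uniformly in $\eps$; then parabolic $L^q$ maximal regularity for $\partial_t\zzh c-\Delta\zzh c=g_\eps$ on $(\tau/2,\infty)$ places $\zzh c$ in $W^{1,q}_t\cap W^{2,q}_x$ locally for all $q$, and the Sobolev embedding yields $\nabla\zzh c\in C^\theta(\overline\Omega\times[t,t+1])$ for $t\ge\tau$. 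With that adjustment your sketch is complete.
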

Let us now come up with one statement on time regularity of $\zzh n$ in a straightforward way.
\begin{lem}\label{lem7.40}
There exists $C>0$ such that for all $\varepsilon\in(0,1)$ we have
\begin{eqnarray}
\int^T_0\|{\partial_t}{n_\varepsilon}(\cdot,t)\|^{p'}_{(W^{1,p}(\Omega))^*}\leq C(T+1),\ \ \ \ {for\ all}\ T>0,\label{eq1-lem5.2}
\end{eqnarray}
where $p'=\frac p{p-1}$.
\end{lem}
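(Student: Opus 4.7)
The plan is to take a test function $\varphi \in W^{1,p}(\Omega)$ with $\|\varphi\|_{W^{1,p}(\Omega)} \leq 1$, multiply the first equation of \eqref{model-approximate} by $\varphi$, integrate by parts, and then bound the resulting expression uniformly in $\eps$. Using $\nabla\cdot u_\eps = 0$, I would rewrite the advective term as $u_\eps\cdot\nabla n_\eps = \nabla\cdot(n_\eps u_\eps)$, so that
\begin{equation*}
\int_\Omega \partial_t n_\eps \cdot \varphi = -\int_\Omega (|\nabla n_\eps|^2+\eps)^{\frac{p-2}{2}} \nabla n_\eps \cdot \nabla\varphi + \int_\Omega n_\eps F_\eps'(n_\eps)\nabla c_\eps \cdot \nabla\varphi + \int_\Omega n_\eps u_\eps \cdot \nabla\varphi.
\end{equation*}
Then I would take the supremum over such $\varphi$ to obtain a pointwise-in-$t$ bound for $\|\partial_t n_\eps(\cdot,t)\|_{(W^{1,p}(\Omega))^*}$.

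For each of the three terms on the right, I would apply H\"older's inequality together with $\|\nabla\varphi\|_{L^p(\Omega)} \leq 1$. For the first, the elementary bound $(|\nabla n_\eps|^2+\eps)^{\frac{p-2}{2}}|\nabla n_\eps| \leq 2^{(p-1)/2}(|\nabla n_\eps|^{p-1}+1)$ valid for $p\geq 2$ and $\eps\in(0,1)$ yields control by $C(\|\nabla n_\eps\|_{L^p(\Omega)}^{p-1}+1)$. For the second, I use $0 \leq F_\eps'(n_\eps) \leq 1$ together with the $L^\infty$ bound on $n_\eps$ from Lemma \ref{nwuqiong} and any $L^{p'}$ bound on $\nabla c_\eps$, which is guaranteed uniformly by Lemma \ref{dklfhjaskflhel}. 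The third term is handled analogously, bounding $\|n_\eps u_\eps\|_{L^{p'}(\Omega)}$ via Lemma \ref{nwuqiong} and the uniform $L^{p'}$ bound for $u_\eps$ also from Lemma \ref{dklfhjaskflhel}. Combining these yields
\begin{equation*}
\|\partial_t n_\eps(\cdot,t)\|_{(W^{1,p}(\Omega))^*} \leq C\bigl(\|\nabla n_\eps(\cdot,t)\|_{L^p(\Omega)}^{p-1} + 1\bigr),
\end{equation*}
uniformly in $\eps\in(0,1)$ and $t>0$.

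Raising to the power $p' = p/(p-1)$ uses $(p-1)p' = p$ to produce
\begin{equation*}
\|\partial_t n_\eps(\cdot,t)\|_{(W^{1,p}(\Omega))^*}^{p'} \leq C'\bigl(\|\nabla n_\eps(\cdot,t)\|_{L^p(\Omega)}^{p} + 1\bigr).
\end{equation*}
Integrating over $[0,T]$ reduces the problem to bounding $\int_0^T \int_\Omega |\nabla n_\eps|^p$. For this I would invoke Lemma \ref{jsafdakldjsl} with the choice $m=2$, which gives $\frac{m-2}{p}+1 = 1$ and hence $\int_t^{t+1}\int_\Omega |\nabla n_\eps|^p \leq C$ for all $t\geq 0$. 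Summing over roughly $T$ unit intervals yields $\int_0^T \int_\Omega |\nabla n_\eps|^p \leq C(T+1)$, and combining with the above gives the desired estimate \eqref{eq1-lem5.2}.

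There is no real obstacle here; the argument is routine once the $L^\infty$ estimate for $n_\eps$ (Lemma \ref{nwuqiong}), the $L^q$ estimates on $\nabla c_\eps$ and $u_\eps$ (Lemma \ref{dklfhjaskflhel}), and the space-time $L^p$ control on $\nabla n_\eps$ (Lemma \ref{jsafdakldjsl} at $m=2$) are in hand. The only minor care required is in handling the degenerate diffusion coefficient $(|\nabla n_\eps|^2+\eps)^{(p-2)/2}$, where one must exploit $p\geq 2$ to absorb the $\eps$-perturbation into an $L^{p-1}$ bound of $|\nabla n_\eps|$ plus a harmless constant.
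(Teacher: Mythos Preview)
Your argument is correct and follows the same overall scheme as the paper: test the first equation against $\varphi$, bound each of the three resulting integrals by H\"older's inequality, raise to the power $p'$, and integrate in time using the space--time $L^p$ bound on $\nabla n_\eps$. The only noteworthy difference is in the cross terms: the paper splits $|n_\eps\nabla c_\eps|^{p'}$ and $|n_\eps u_\eps|^{p'}$ via Young's inequality into separate powers ($|n_\eps|^6$, $|\nabla c_\eps|^4$, $|u_\eps|^{10/3}$) and then invokes the space--time bound on $\int_t^{t+1}\int_\Omega|\nabla c_\eps|^4$ from Lemma~\ref{lem3.2}, whereas you combine the $L^\infty$ bound on $n_\eps$ (Lemma~\ref{nwuqiong}) directly with the uniform-in-time $L^{p'}$ bounds on $\nabla c_\eps$ and $u_\eps$ from Lemma~\ref{dklfhjaskflhel}. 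Your route is slightly more economical; the paper's Young-splitting appears to be a remnant of arguments designed for settings where the full $L^\infty$ bound on $n_\eps$ is not yet available. One small caveat: Lemmas~\ref{jsafdakldjsl} and~\ref{dklfhjaskflhel} as stated cover only $p\in(\tfrac{23}{11},\tfrac73)$, so for $p\ge\tfrac73$ you should instead cite Lemma~\ref{lemfhdaskh} (and the analogues of Lemma~\ref{lem6.u} that follow from it) --- but the paper's own proof has the identical gap, so this is not a defect peculiar to your argument.
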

\begin{proof}
For $t>0$ and $\varphi\in C^\infty(\overline{\Omega})$, multiplying the first equation in (\ref{model-approximate}) by $\varphi$,
integrating by parts and using the H${\rm{\ddot{o}}}$lder's inequality, we obtain
\begin{eqnarray}
&&|\int_\Omega{\partial_t}{n_\varepsilon}(\cdot,t)\varphi|\nonumber\\
&=&|-\int_\Omega(|\nabla{n_\varepsilon}|^2+\varepsilon)^\frac{p-2}2\nabla{n_\varepsilon}\cdot\nabla\varphi+\int_\Omega{n_\varepsilon}{F_\varepsilon}'({n_\varepsilon})\nabla{c_\varepsilon}\cdot\nabla\varphi+\int_\Omega{n_\varepsilon}{u_\varepsilon}\cdot\nabla\varphi|\nonumber\\
&\leq&\int_\Omega(|\nabla{n_\varepsilon}|^2+\varepsilon)^\frac{p-2}2|\nabla{n_\varepsilon}|\cdot|\nabla\varphi|+C\|{n_\varepsilon}\nabla{c_\varepsilon}\|_{p'}\|\nabla\varphi\|_{p}+\|{n_\varepsilon}{u_\varepsilon}\|_{p'}\|\nabla\varphi\|_{p}\nonumber\\
&\leq&C_1(\|(|\nabla{n_\varepsilon}|^2+\varepsilon)^\frac{p-1}2\|_{p'}+\|{n_\varepsilon}\nabla{c_\varepsilon}\|_{p'}+\|{n_\varepsilon}{u_\varepsilon}\|_{p'})\|\nabla\varphi\|_{p}\nonumber
\end{eqnarray}
with some $C_1>0$.
Lemma \ref{lem3.2} implies
\begin{eqnarray}
\int^T_0\int_\Omega|\nabla{c_\varepsilon}|^4\leq C_2(T+1),\ \ \ \ {\rm{for\ all}}\ T>0\label{eq-lem5.2-zzh-c-4-norm}
\end{eqnarray}
with some $C_2>0$. Using $\frac16+\frac14<\frac16+\frac3{10}<\frac1{p'}$, (\ref{eq-lem5.2-zzh-c-4-norm}), Lemma \ref{jsafdakldjsl}, Lemma (\ref{nwuqiong}), Lemma \ref{dklfhjaskflhel}, H${\rm{\ddot{o}}}$lder's inequality and Young's inequality, we obtain $C_3>0$, $C_4>0$, and $C_5>0$ such that
\begin{eqnarray}
&&\int^T_0\|{\partial_t}{n_\varepsilon}(\cdot,t)\|^{p'}_{(W^{1,p}(\Omega))^*}\nonumber\\
&\leq& C_3(\int^T_0\int_\Omega(|\nabla{n_\varepsilon}|^2+1)^{\frac{p-1}2{p'}}+\int^T_0\int_\Omega|{n_\varepsilon}\nabla{c_\varepsilon}|^{p'}+\int^T_0\int_\Omega|{n_\varepsilon}{u_\varepsilon}|^{p'})\nonumber\\
&\leq&C_4(\int^T_0\int_\Omega(|\nabla{n_\varepsilon}|^{p}+1)+\int^T_0\int_\Omega|{n_\varepsilon}|^{6}+\int^T_0\int_\Omega|\nabla{c_\varepsilon}|^{4}\nonumber\\
&\ &+\int^T_0\int_\Omega|{n_\varepsilon}|^{6}+\int^T_0\int_\Omega|{u_\varepsilon}|^{\frac{10}3}+|\Omega|T)\nonumber\\
&\leq&C_5(T+1),\ \ \ \ {\rm{for\ all}}\ T>0.\label{eq-lem5.2-estimate-i}
\end{eqnarray}
\end{proof}
\section{Existence of a global bounded weak solution}
In this section we construct global bounded weak solutions for (\ref{CS}), (\ref{eq-i}) and (\ref{eq-bc}).
Based on the estimates we collected in the previous sections, we can get the following.

\begin{lem} \label{dhkfauheliuf}
Let $p>\frac{23}{11}$. Then there exist $(\eps_j)_{j\in\mathbb{N}}\subset(0,1)$ satisfying $\eps_j\searrow0$ as $j\rightarrow\infty$, a null set $N\subset(0,\infty)$ and a triple $(n,c,u)$ of functions $n,c: \Omega\times(0,\infty)\rightarrow[0,\infty)$ and $u: \Omega\times(0,\infty)\rightarrow\daR^3$ such that
\begin{eqnarray}
\zzh n(\cdot,t)\rightarrow n (\cdot,t) &&\quad a.e.\ in\ \Omega\ for\ all\ t\in(0,\infty)\setminus N,\label{rneq1}\\
n_{\varepsilon} \xrsl n              &&\quad \mbox{in}\ L^{\infty}(\Omega\times(0, \infty)),\label{rneq2}\\
n_{\varepsilon}\rightarrow n              &&\quad \mbox{in}\ L_{loc}^{p}(\Omega\times(0, \infty)),\label{rneq3}\\
\nabla n_{\varepsilon}\rightharpoonup \nabla n         && \quad \mbox{in}\ L_{loc}^{p}(\Omega\times(0, \infty)),\label{rneq4}\\
|\nabla {n_\varepsilon}|^{p-2}\nabla n_{\varepsilon}\rightharpoonup |\nabla n|^{p-2}\nabla n &&\quad \mbox{in}\ L_{loc}^{p'}(\Omega\times(0, \infty))\label{rneq5}\\
\zzh c\rightarrow c&&\quad \mbox{in}\ C_{loc}^{0}(\overline{\Omega}\times[0, \infty)),\label{rceq1}\\
\zzh c\xrsl c&&\quad \mbox{in}\ L^\infty((0,\infty);W^{1,s}(\Omega))\ \ \ \ for\ all\ s\in(1,\infty),\label{rceq2}\\
\nabla\zzh c\rightarrow \nabla c&&\quad \mbox{in}\ C_{loc}^{0}(\overline{\Omega}\times[0, \infty)),\label{rceq3}\\
\zzh u\rightarrow u&&\quad \mbox{in}\ C_{loc}^{0}(\overline{\Omega}\times[0, \infty)),\label{rueq1}\\
\zzh u\xrsl u&&\quad \mbox{in}\ L^\infty(\Omega\times(0,\infty))\ \ \ \ and\label{rueq2}\\
\nabla\zzh u \rightharpoonup \nabla u&&\quad \mbox{in}\ L^2_{loc}(\Omega\times[0, \infty))\label{rueq3}
\end{eqnarray}
as $\eps=\eps_j\searrow0$, where $p'=\frac p{p-1}$. Moreover, $(n,c,u)$ forms a global weak solution of (\ref{CS}), (\ref{eq-i}), (\ref{eq-bc}) in the sense of Definition \ref{defnaaaa}, and we have
\be
\jfo n(\cdot,t)=\jfo n_0,\ \ \ \ {\rm for\ all}\ t\in(0,\infty)\setminus N.\label{fldhsadlag}
\ee
\end{lem}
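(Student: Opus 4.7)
The plan is to build the subsequence in stages by extracting successive diagonal subsequences, first exploiting the strong-compactness estimates for $\zzh c$ and $\zzh u$, then for $\zzh n$, and finally passing to the limit in the weak formulations. Because all bounds collected in Sections 3--7 are $\eps$-uniform, the existence of $(\eps_j)\subset(0,1)$ achieving the listed convergences is essentially a compactness exercise; the real work lies in handling the nonlinear terms, in particular the $p$-Laplacian flux $|\nabla\zzh n|^{p-2}\nabla\zzh n$.

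First I would deal with $\zzh c$ and $\zzh u$. Lemma~\ref{hdfikaufyha} gives uniform H\"older bounds on $\zzh c$, $\zzh u$ on $\overline\Omega\times[t,t+1]$ and on $\nabla\zzh c$ on $\overline\Omega\times[\tau,t+1]$, so by Arzel\`a--Ascoli together with a diagonal argument in $t$ one extracts $\eps_j\searrow0$ realising (\ref{rceq1}), (\ref{rceq3}), (\ref{rueq1}), which fixes the limits $c$ and $u$. The weak-$*$ statements (\ref{rceq2}), (\ref{rueq2}) follow from Lemma~\ref{dklfhjaskflhel} and Lemma~\ref{nwuqiong} by Banach--Alaoglu, while (\ref{rueq3}) follows from Lemma~\ref{lem3.2} by weak $L^2$ compactness. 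Next I turn to $\zzh n$: Lemma~\ref{nwuqiong} yields (\ref{rneq2}), and from Lemma~\ref{jsafdakldjsl} applied with $m=p$ together with the chain rule one gets $\zzh n$ bounded in $L^p_{\mathrm{loc}}([0,\infty);W^{1,p}(\Omega))$, which, combined with Lemma~\ref{lem7.40} (time derivative in $(W^{1,p})^*$), permits an Aubin--Lions--Simon compactness argument to produce a further subsequence along which $\zzh n\to n$ strongly in $L^p_{\mathrm{loc}}(\overline\Omega\times[0,\infty))$ and a.e.; this delivers (\ref{rneq1}) and (\ref{rneq3}), while (\ref{rneq4}) is simultaneously obtained from weak $L^p$ compactness.

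The main obstacle is establishing (\ref{rneq5}), namely the identification of the weak $L^{p'}_{\mathrm{loc}}$-limit of $(|\nabla\zzh n|^2+\eps)^{(p-2)/2}\nabla\zzh n$ with $|\nabla n|^{p-2}\nabla n$. Because $\nabla\zzh n$ converges only weakly, I would apply a Minty-type monotonicity argument: the map $\xi\mapsto(|\xi|^2+\eps)^{(p-2)/2}\xi$ is monotone, so for every smooth test vector field $\Phi$ one has
\begin{equation*}
\int\!\!\!\int\bigl((|\nabla\zzh n|^2+\eps)^{(p-2)/2}\nabla\zzh n - (|\Phi|^2+\eps)^{(p-2)/2}\Phi\bigr)\cdot(\nabla\zzh n-\Phi)\,dx\,dt\geq 0.
\end{equation*}
Testing the $n_\eps$-equation by a suitable cutoff of $\zzh n$ itself gives an expression for $\iint(|\nabla\zzh n|^2+\eps)^{(p-2)/2}|\nabla\zzh n|^2$ in terms of quantities known to converge (using (\ref{rneq3}), (\ref{rceq3}), (\ref{rueq1}), and the time-derivative bound from Lemma~\ref{lem7.40}); this lets me pass to the limit in the above inequality, then take $\Phi=\nabla n\pm\lambda\Psi$ and let $\lambda\to0$ in the standard fashion to conclude (\ref{rneq5}). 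A variant of this argument using the strong $L^p_{\mathrm{loc}}$ convergence of $\zzh n$ (Lemma~\ref{lem3.1} controls $\|\nabla\zzh n^{1-1/p}\|_{L^p}$, giving an energy identity to exploit) may even upgrade (\ref{rneq4}) to strong convergence, after which (\ref{rneq5}) is immediate by dominated convergence using the uniform $L^{p'}$ bound.

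Having secured all convergences, verification of the weak formulations (\ref{eq1-weak-sol})--(\ref{eq3-weak-sol}) is routine: for each test function $\varphi$, rewrite the classical identity for the approximate problem in distributional form, then pass to the limit term by term using (\ref{rneq1})--(\ref{rueq3}) together with the properties (\ref{eq1-F})--(\ref{eq2-F}) of $F_\eps$ (note $\zzh n F'_\eps(\zzh n)\to n$ a.e. and is dominated by $\zzh n\in L^\infty$, so $\zzh n F'_\eps(\zzh n)\nabla\zzh c\to n\nabla c$ in $L^1_{\mathrm{loc}}$, and similarly $F_\eps(\zzh n)\zzh c\to nc$). The regularity claims (\ref{eq1-thm1.1}) transfer from the $\eps$-uniform bounds by lower semicontinuity. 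Finally, for (\ref{fldhsadlag}) I would fix $t$ outside the null set $N$ where (\ref{rneq1}) holds, and apply the dominated convergence theorem to $\int_\Omega\zzh n(\cdot,t)$, using the uniform $L^\infty$ bound of Lemma~\ref{nwuqiong} as dominating function and the mass conservation (\ref{eq-n-mass-conservation}) at the level of the approximations.
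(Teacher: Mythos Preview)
Your plan is correct and follows the paper's route: Aubin--Lions for $\zzh n$, Arzel\`a--Ascoli for $\zzh c$ and $\zzh u$, and a Minty-type monotonicity argument for the $p$-Laplacian flux (the paper delegates this last step to \cite[Lemma~6.2]{first-paper}, which is precisely the argument you sketch). One small correction: invoking Lemma~\ref{jsafdakldjsl} with $m=p$ yields a bound on $\nabla\zzh n^{(2p-2)/p}$, and the chain rule then introduces the \emph{negative} power $\zzh n^{2-p}$, which is not controlled where $\zzh n$ is small; the paper instead combines Lemma~\ref{lem3.2} (the bound on $|\nabla\zzh n^{(p-1)/p}|^p$) with the $L^\infty$ bound from Lemma~\ref{nwuqiong} to obtain $\int_0^T\!\!\int_\Omega|\nabla\zzh n|^p\leq C\|\zzh n\|_{L^\infty}^{\,p-1}\int_0^T\!\!\int_\Omega|\nabla\zzh n^{(p-1)/p}|^p$, which also covers the range $p\geq\frac73$ where Lemma~\ref{jsafdakldjsl} is not stated.
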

\begin{proof}
From Lemma \ref{lem3.2} and Lemma \ref{nwuqiong} we have that for all $T>0$ there holds
\bes
\int^T_0\jfo|\nabla \zzh n|^p&=&\int^T_0\jfo\left|\frac p{p-1}\zzh n^{1-\frac1p}\nabla\zzh n^{\frac{p-1}p}\right|^p\nonumber\\
&\leq&\left(\frac p{p-1}\right)^p\left\|\zzh n\right\|_{L^\infty((0,T)\times\Omega)}^{p-1}\int^T_0\jfo\left|\nabla\zzh n^{\frac{p-1}p}\right|^p\nonumber\\
&\leq&C(T+1).\label{sdklfhash}
\ees
By Lemma \ref{lem7.40}, we have
\be
\left\|\left(n_{\varepsilon}\right)_t\right\|_{L^{p'}([0, T); (W^{1,p}(\Omega))^{*})}\leq C(T+1),\ \ \ \ {\rm for\ all}\ T>0.\label{slx-space-nt}
\ee
By Lemma \ref{nwuqiong}, (\ref{sdklfhash}) and (\ref{slx-space-nt}), an application of Aubin-Lions lemma yields $(\eps_j)_{j\in\mathbb{N}}\subset(0,1)$ such that $\eps_j\searrow0$ as $j\rightarrow\infty$ and
\begin{eqnarray}
n_{\varepsilon}\rightarrow n              &\quad& \mbox{in}\ L_{loc}^{p}(\Omega\times[0, \infty))\ \mbox{and a.e. in}\ \Omega\times(0, \infty),\label{n-strong-p}
\end{eqnarray}
as $\eps=\eps_j\searrow0$ with some nonnegative function $n$ defined on $\Omega\times(0,\infty)$. So (\ref{n-strong-p}) ensures (\ref{rneq3}) and (\ref{rneq1}) follows by using the Fubini-Tonelli theorem. Moreover, (\ref{sdklfhash}) and (\ref{n-strong-p}) yield (\ref{rneq4}). From Lemma \ref{nwuqiong} and (\ref{n-strong-p}) we know that (\ref{rneq2}) is valid. Now with Lemma \ref{nwuqiong}, (\ref{n-strong-p}) and (\ref{rneq2}) at hand, we can use \cite[Lemma 6.2]{first-paper} to obtain
\begin{eqnarray}
|\nabla {n_\varepsilon}|^{p-2}\nabla n_{\varepsilon}\rightharpoonup |\nabla n|^{p-2}\nabla n &\quad& \mbox{in}\ L_{loc}^{p'}(\overline{\Omega}\times[0, \infty)),\nonumber
\end{eqnarray}
hence we arrive at (\ref{rneq5}), and this is enough to warrant that
\begin{eqnarray}
\left(|\nabla {n_\varepsilon}|^2+\eps\right)^\frac{p-2}2\nabla n_{\varepsilon}\rightharpoonup |\nabla n|^{p-2}\nabla n &\quad& \mbox{in}\ L_{loc}^{p'}(\overline{\Omega}\times[0, \infty)).\label{fhdsiuyheoif}
\end{eqnarray}
Based on the priori estimates provided Lemma \ref{lem3.2}, Lemma \ref{dklfhjaskflhel} and Lemma \ref{hdfikaufyha}, using the Arzela-Ascoli theorem twice we can achieve (\ref{rceq1})-(\ref{rueq3}). We can derive from (\ref{model-approximate}) that$\jfo \zzh n=\jfo n_0$, this together with {rneq2} and Lebesgue's Dominated Convergence Theorem yields (\ref{fldhsadlag}).
\end{proof}

\textbf{Proof of Theorem \ref{thm1.1}.} Theorem \ref{thm1.1} is a direct consequence of a combination of Lemma \ref{dklfhjaskflhel}, Lemma \ref{hdfikaufyha} and Lemma \ref{dhkfauheliuf}.
\section*{\uppercase {Acknowledgments}}
The authors are supported in part by NSF of China (No. 11671079, No. 11701290, No. 11601127 and No. 11171063), and NSF of Jiangsu Provience (No. BK20170896).

\end{document}